\pgfplotsset{compat=1.7}
\newtheorem{remark}{Remark}[section]
\numberwithin{equation}{section} 
\newtheorem{proposition}{Proposition}[section]
\newtheorem{lemma}{Lemma}[section]
\newtheorem{definition}{Definition}[section]
\newtheorem{theorem}{Theorem}[section]
\DeclareMathOperator{\argmin}{argmin}
\DeclareMathOperator{\argmax}{argmax}
\DeclareMathOperator{\supp}{supp}
\DeclareMathOperator{\Lip}{Lip}
\def\F{\mathcal{F}}
\def\G{\mathcal{G}}
\def\W{\mathcal{W}}
\title{An optimal control problem for the continuity equation arising in smart charging}
\author{Adrien S{\'e}guret
\thanks{
PSL Research University, Université Paris-Dauphine, CEREMADE, Place de Lattre de Tassigny, F- 75016 Paris, France, \newline
OSIRIS department, EDF Lab,  Bd Gaspard Monge, 91120 Palaiseau, France, \newline
adrien.seguret@edf.fr}}
\date{}
\begin{document}

\maketitle
\begin{abstract}
This paper is focused on the mathematical modeling and solution of the optimal charging of a large population of identical plug-in electric vehicles (PEVs) with mixed state variables (continuous and discrete). A mean field assumption is formulated to describe the evolution interaction of the PEVs population. The optimal control of the resulting continuity equation of the mixed system under state constraints is investigated. We prove the existence of a minimizer. We then characterize the solution as the weak solution of a system of two coupled PDEs: a continuity equation and of a Hamilton-Jacobi equation.
We provide regularity results of the optimal feedback control.
\end{abstract}

\begin{keywords}
Optimal control, optimality conditions, mean field control.
\end{keywords}

\section{Introduction}\label{intro_section}
This article studies the optimal control of a first order continuity equation with a reaction term under state constraints.
Let us consider a finite time interval $[0,T]$ and a mixed state space equal to the product $[0,1]\times I$, where $I$ is a finite space, the cardinality of which is denoted by $\vert I\vert $. The continuity equation is given by: 
\begin{equation}
\label{fk}
\begin{array}{ll}
\partial_t m_i(t,s)+\partial_s(m_i(t,s)b_i(s))
=-\sum_{j\neq i}(\alpha_{i,j}(t,s)m_i(t,s)-\alpha_{j,i}(t,s)m_{j}(t,s))& (i,t,s)\in I\times (0,T)\times (0,1),
\\
m_i(0,s) =  m_i^0(s) & (i,s)\in I\times [0,1],
\end{array}
\end{equation}
where $m:[0,T]\to \mathcal{P}([0,1]\times I)$ is a curve of probability distribution, $m^0\in \mathcal{P}([0,1]\times I)$ is the given initial distribution, $b: [0,1]\to \mathbb{R}^{\vert I \vert}$ is a velocity field, and the control $\alpha:  [0,T]\times [0,1]\to \mathbb{R}_+^{\vert I \vert\times \vert I \vert }$ is a jump intensity. The notion of weak solution of  \eqref{fk} is specified in Definition \ref{weak_sol_cont_equ}. We further assume that $b$ vanishes at the boundary of $[0, 1]$ so that the mass conservation in $I\times [0,1]$ is guaranteed without forcing a boundary condition. The distribution $m$ is subject to the following congestion constraints:
\begin{equation}
\label{congestion_ineq_D}
m_i(t,[0,1])\leq D_i(t)\quad \forall(i,t)\in I\times [0,T],
\end{equation}
where $D:[0,T] \to (\mathbb{R}^\ast_+)^{\vert I \vert}$ is given. 
The objective function $J$ is defined as follows:
\begin{equation}
\label{obj_formulation}
J(m,\alpha) := \sum_{i\in I}\int_{0}^{T}\int_0^1\bigg( c_{i}(t,s)+\sum_{j\in I,j\neq i} L(\alpha_{ij}(t,s))\bigg)m_i(t,ds)dt + \sum_{i\in I}\int_0^1g_i(s)m_i(T,ds),
\end{equation}
where $c$ is a running cost, $L$ a function penalizing high values of the control $\alpha$ and $g$ a final cost.
Our purpose is to study the optimization problem:
\begin{equation}
\label{opt:J}
 \inf_{m,\alpha}\,J(m,\alpha), \;\mbox{where }(m,\alpha)\mbox{ is a weak solution of }\eqref{fk}\mbox{ and satisfies }\eqref{congestion_ineq_D}.
\end{equation}
Our work is initially motivated by
the optimal charging of a population of PEVs controlled by a central planner. 
The continuous variable $s\in [0,1]$ in \eqref{fk} represents the level of battery of a PEV. The discrete variable $i\in I$ represents the mode of charging (e.g. idling, charging, discharging, etc...). For any $(t,s,i)\in [0,T]\times [0,1]\times I$, the value $m_i(t,s)$ represents the proportion of PEVs at time $t$ at state $(s,i)$.
The given velocity field $b_i(s)$ denotes the power of
charge or discharge of a PEV in mode $i$ and with battery level $s$. 
For any $(t,s,i,j)\in [0,T]\times [0,1]\times I\times I$, the value $\alpha_{i,j}(t,s)$ denotes the jump intensity of PEVs from the state $(s,i)$ to the state $(s,j)$ at time $t$. The control $\alpha$ is required to be a non-negative measurable function.
The congestion constraint \eqref{congestion_ineq_D} avoids high demand of energy at each moment over the period.
By penalizing high values of $\alpha$, the cost function $L$ aims at avoiding the synchronization of jumps of the PEVs and the consequent instability of the electrical network.
The value $c_i(t,s)$ corresponds to the cost per PEV to pay at time $t\in [0,T)$ at state $(s,i)$; $g_i(s)$ is the final cost per PEV to pay at state $(s,i)$. 
Numerical results of Problem \eqref{opt:J} applied to smart charging can be found in \cite{seguret2021mean}.
Problem \eqref{opt:J} can be interpreted heuristically as an approximation of the limit case $N\to\infty$ of an optimal switching problem of $N$ PEVs. Combinatorial techniques as well as optimal control tools may fail to solve problems with large population of PEVs, due to the curse of dimensionality \cite{bellman2015adaptive}. 
To overcome these difficulties, a continuum of PEVs can be considered, leading to the techniques of the optimal control of PDE. The connection between the finite population problem and the mean field problem is addressed in a companion paper \cite{seguret2022mean} by the author.

\subsection{Contributions, methodology and literature}

\textbf{Contributions }
This paper makes three main contributions.

First we prove the existence of solutions of Problem \eqref{opt:J}.

Second, we derive optimality  conditions of Problem \eqref{opt:J}.
More precisely, let $H$ denote the Fenchel conjugate of $L$ and $H'$ denote the derivative of $H$, we show that, if $(m,\alpha)$ is a solution of \eqref{opt:J}, then there exists a pair $(\varphi, \lambda)$ such that, for any $i,j\in I$, $\alpha_{i,j}=H'(\varphi_i-\varphi_j)$, and  $(\varphi,\lambda,m)$ is a weak solution of the following system:
\begin{equation}
\label{syst_opt_cond_intro}
\left\{
\begin{array}{ll}
-\partial_t\varphi_i-b_i\partial_s\varphi_i-c_i-\lambda_i+\sum_{j\in I,j\neq i}H(\varphi_i-\varphi_j)= 0 
&\mbox{on }(0,T)\times (0,1)\times I
\\
\partial_t m_i+\partial_s(m_i b_i)
+\sum_{j\neq i}H'(\varphi_i-\varphi_j)m_i -H'(\varphi_j-\varphi_i)m_{j} =0 
&\mbox{on }(0,T)\times (0,1)\times I\vspace{0.2cm}\\
m_i(0,s) =  m_i^0(s),\,  \varphi_i(T,s) =  g_i(s)+\lambda_i(\{T\})
&\mbox{on } (0,1)\times I
\\
\int_0^1m_i(t,ds)-D_i(t)\leq 0,\,\lambda\geq 0
&\mbox{on }[0,T]\times I\\
\sum_{i\in I}\int_0^T\bigg(\int_0^1 m_i(t,ds)-D_i(t)\bigg)\lambda_i(dt)=0,
\end{array}
\right.
\end{equation}
The function $\varphi$ is the Lagrange multiplier associated with the dynamic constraint \eqref{fk}, and the measure $\lambda$ is associated with the congestion constraint \eqref{congestion_ineq_D}. 
The first equation in \eqref{syst_opt_cond_intro} is a backward Hamilton-Jacobi equation. The existence, uniqueness and characterization of weak solutions of the backward Hamilton-Jacobi equation are investigated in the paper. 
The second equation in \eqref{syst_opt_cond_intro} is a forward continuity equation, where the control $\alpha$, defined by $\alpha_{i,j}=H'(\varphi_i-\varphi_j)$, is optimal. The measure $\lambda$ is non-negative and finite. The last equality in \eqref{syst_opt_cond_intro} ensures that the congestion constraint \eqref{congestion_ineq_D} is satisfied.

Third, we obtain regularity property for any weak solution $(\varphi,\lambda,m)$ of \eqref{syst_opt_cond_intro}. 
We prove that, under suitable assumptions on the data $b,g$ and $c$, the multiplier $\varphi$ is in $\Lip([0,T]\times[0,1],\mathbb{R}^{\vert I \vert})+BV([0,T],\mathbb{R}^{\vert I \vert})$. As a result, the optimal control $\alpha$ is bounded and Lipschitz continuous in space uniformly w.r.t. the time variable and the measure $m$ is in $\Lip([0,1],\mathcal{P}([0,1]\times I))$. We show that if the initial distribution $m^0$ is absolutely continuous w.r.t. the Lebesgue measure and has a smooth density, then the measure $m$ is absolutely continuous w.r.t. the Lebesgue measure and has Lipschitz  continuous density. 
\newline

\textbf{Methodology }
The existence of an optimal solution is established by compactness arguments.
We adopt a duality approach to obtain \eqref{syst_opt_cond_intro}. More explicitly, we relax the dynamics \eqref{fk} and the congestion constraint \eqref{congestion_ineq_D}. The resulting relaxed problem is then expressed as the dual of another convex problem. We show that the system \eqref{syst_opt_cond_intro} is the optimality condition of Problem \eqref{opt:J}. 
\newline

\textbf{Literature }
Solving the optimal control of a Fokker-Planck equation by means of the duality theory has been well known since decades \cite{fleming1989convex, vinter1993convex}.
Our work follows the method developed in the seminal work by Benamou and Brenier 
\cite{benamou2000computational} for optimal transport problems. In \cite{benamou2000computational}, a continuity equation is controlled with initial and final constraints; optimality conditions are obtained as a system of PDEs close to \eqref{syst_opt_cond_intro}. Similar method and results also in optimal transport are derived in \cite{cardaliaguet2013geodesics}. More recently, this approach was applied to solve on optimal control problem of a Fokker-Planck equation under state constraints in the $1$-Wasserstein space \cite{daudin2022optimal,daudin2023optimal}, where Lipschitz regularity results of the optimal control are proved.

The optimality conditions \eqref{syst_opt_cond_intro} typically arises in the Mean Field Game (MFG for short) Theory. This class of problems, introduced by Lasry and Lions \cite{lasry2006stat, lasry2006jeux,  lasry2007mean} and Huang, Malhamé and Caines \cite{huang2007large,huang2006large}, describes the interaction among a large population of identical and rational agents in competition. Mean Field Control (MFC for short) and MFG theories have been extensively used over the last few years as a mathematical tool in electrical engineering. More specifically, the optimal control of PDEs applied to smart charging can be found in \cite{le2015optimal,sheppard2017optimal}, and to the management of a population of thermostatically controlled loads in \cite{ghaffari2015modeling,moura2013modeling}.  

Conversely, the duality approach is close to the so-called  variational approach used in MFG theory in \cite{cardaliaguet2015mean}, where the weak solution of the MFG system is characterized as the minimizer of some optimal control of Hamilton-Jacobi and Fokker-Planck equations.
This approach allows to use optimization techniques to prove the existence and uniqueness of the solution of MFG and MFC problems. We refer to \cite{achdou2016mean,benamou2017variational,briceno2018proximal,cardaliaguet2015second,orrieri2019variational} and the references therein. 
Besides, the variational approach allows to apply optimization algorithms to numerically solve MFG problems \cite{ benamou2015augmented,briceno2019implementation, briceno2018proximal}. 

Note that different optimality conditions for control problems in the space of probability measures can be derived by using a kind of Pontryagin Maximum Principle \cite{bonnet2021necessary,bonnet2019pontryagin}.

A particularity of this paper is to deal with a congestion constraint \eqref{congestion_ineq_D} on the measure. Two kinds of congestion effects are explored in the MFG and MFC frameworks. On the one hand, ``soft congestion" increases the cost of velocity of the agents in areas with high density. On the other hand, ``hard congestion" constraints impose density constraints, e.g. $m\leq \bar{m}$ at any point $(t,s)$. The variational approach yields good results  when applied to MFC \cite{achdou2016mean} and MFG with ``soft congestion" in a stationary framework \cite{evangelista2018first}, 
as well as to MFG problems dealing with 
``hard congestion" constraints. This was first investigated in \cite{santambrogio2012modest} where the density of the population did not exceed a given threshold, then in \cite{meszaros2015variational} where stationary second order MFGs were considered. In \cite{cardaliaguet2016first}, a price which is imposed on the saturated zone to make the density satisfy the constraints is obtained. In the same vein as the work of Benamou and Brenier \cite{benamou2000computational}, ``hard congestion" constraints were examined in optimal transport \cite{buttazzo2009optimization}.
We highlight that our paper deals with aggregate ``hard congestion" constraints on the measure $m$ \eqref{congestion_ineq_D}, i.e., our constraint is less restrictive than a constraint of the type $m\leq \bar{m}$ a.e.. 

Besides, we consider a mixed state space with continuous and discrete state variable. This setting has seldom been investigated in the MFG literature. Indeed, the articles cited in the paragraphs above looked only at continuous state variables. The resulting continuity equation \eqref{fk}
contains a term of reaction, indicating mass transfers between states in $I$. Such PDEs also arised in \cite{annunziato2014connection} to model the mean field limit of Piecewise Deterministic Markov Proccesses (PDMP for short). 
The velocity was controlled in \cite{annunziato2014connection}. Here, we control the intensity of the jump $\alpha$, but the velocity $b$ is given. A MFG problem with discrete time and state space was explored in \cite{gomes2010discrete} and applied to socio-economic sciences in \cite{gomes2014socio}. The uniqueness of the solution of a finite state MFG was discussed in \cite{bayraktar2021finite} and the convergence of the $N$-player game to the mean field model as $N\to \infty$  was obtained in \cite{gomes2013continuous}.
Mixed state spaces in a MFG framework were studied in \cite{firoozi2017mean,firoozi2022class}, where a major player can switch his state on a finite state space and minor players decide their stopping time.
A MFG problem in a finite state space and discrete time settings with ``hard congestion" was studied in \cite{bonnans2023discrete}, also by variational methods.

Some of our regularity results, namely that $\alpha$ is Lipschitz continuous w.r.t. the space variable $s$, and that the density of $m$ w.r.t. the Lebesgue measure is Lipschitz continuous when $m^0$ is absolutely continuous w.r.t. the Lebesgue measure, are unusual.
We believe that it is mainly due to the linearity of the Hamilton-Jacobi equation w.r.t. $\partial_s \varphi$.
These results will be used in a companion work \cite{seguret2022mean} to quantify the mean field limit of the model.
The time regularity of
$\varphi$ may not be improved as far as we have no more regularity results on $\lambda$. The function $\varphi$ is discontinuous at each atom of the measure $\lambda$.
Regularity results about the multiplier of the density constraint can be found in the literature: in \cite{cardaliaguet2016first}, the authors showed some BV estimates on the pressure, whereas $L^\infty$ estimates for the price were proved in \cite{daudin2023optimal} and \cite{lavenant2019new} in the special case of a quadratic Hamiltonian in a MFG problem.  
{\color{black} In \cite{bonnet2021intrinsic}, sufficient conditions are provided to obtain Lipschitz regularity results, w.r.t. the space variables, of the optimal control in a deterministic framework with continuous state variables. These results are closely related to those given in the articles \cite{gangbo2022global,gangbo2015existence,mayorga2020short}, which deal with first order MFGs.} 
The Sobolev regularity of the weak solutions of a first order MFG system was studied in \cite{santambrogio2018regularity} using duality arguments, in \cite{graber2018sobolev} for MFG problems with local coupling terms and in \cite{graber2019planning} for planning MFG. The existence of classical solutions to first order MFG problems was first investigated in \cite{lions2007theorie} and improved in \cite{munoz2022classical}.

\textbf{Organization of the paper}
The paper is organized as follows. In Section \ref{assum_results}, we present our assumptions and main results. The existence of a solution of Problem \eqref{opt:J} is established in Section \ref{prob_formulation_section}. In Section \ref{hjb_section}, which is independent of the other sections, we show the existence, uniqueness and regularity of weak solution of the Hamilton-Jacobi equation in \eqref{syst_opt_cond_intro}. 
In Section \ref{dual_prob_section}, we return to Problem \eqref{opt:J} and develop the duality approach. We formulate its Lagrangian relaxation and show it is the dual problem of another convex problem.
We obtain the optimality conditions \eqref{syst_opt_cond_intro} of Problem \eqref{opt:J} in Section \ref{charac_mini}. The Lipschitz continuity of the value of Problem \eqref{opt:J} w.r.t. the data $(m^0,D)$ is proved in Section \ref{lip_value_data}.
Basic statements about weak solutions of \eqref{fk} and technical results useful for Section \ref{hjb_section} are postponed in Appendix \ref{appendices_section}.

\section{Assumptions and main results}\label{assum_results}

\subsection{Notations and Assumptions}
\textbf{Notations} The space of Borel, positive and bounded measures on a space ${A}$ is denoted by $\mathcal{M}^+({A})$ and the space of Borel probability measures on a space ${A}$ is denoted by $\mathcal{P}({A})$. For any measure $\mu\in \mathcal{M}^+([0,T])$ and $0\leq t_1<t_2\leq T$, we set $\textstyle \int_{t_1}^{t_2}\mu(dt):=\mu([t_1,t_2])$.
The space of vector-valued, of dimension $d\in \mathbb{N}^+$ (resp. $d\times d$ with $d\in \mathbb{N}^+$), positive and bounded measures on a space ${A}$ is denoted by $\mathcal{M}^+({A},\mathbb{R}_+^d)$ (resp. $\mathcal{M}^+({A},\mathbb{R}_+^{d\times d})$).
  Given a set $\mathcal{S}$, for any function $f :\mathcal{S}\to \mathbb{R}^{\vert I \vert }$, $f_i(x)$ denotes the $i^{th}$ component of $f(x)$, for any $(x,i)\in \mathcal{S}\times I$. Similarly, for any  function $g :\mathcal{S}\to \mathbb{R}^{\vert I \vert \times \vert I \vert }$, $g_{i,j}(x)$ denotes the  element at the $i^{th}$ row and $j^{th}$ column of $g(x)$, for any $(s,i,j)\in \mathcal{S}\times I^2$.
  
  For any probability measure $\mu\in \mathcal{P}(\mathcal{S}\times I)$, we use the notation $\mu_{i}(S):=\mu(S\times\{i\})$ for any $(i,S)\in I\times \mathcal{B}(\mathcal{S})$, where $\mathcal{B}(\mathcal{S})$ denotes the Borel $\sigma$-algebra.
 By the so-called disintigration theorem \cite[Theorem 5.3.1]{ambrosio2008gradient}, for any probality measure $\mu\in  \mathcal{P}(\mathcal{S}\times I)$, $\mu_i$ is in $\mathcal{M}^+(\mathcal{S})$.
 
 For any open subset $\mathcal{O}$ of $\mathbb{R}^d$, the spaces $C^0(\mathcal{O}),C^1(\mathcal{O}) $ and $C^\infty_c(\mathcal{O})$ denote respectively the set of continuous functions, of continuously differentiable functions, and of test functions compactly supported defined on $\mathcal{O}$.
If $\mathcal{S}$ is a metric space, let  $\Lip(\mathcal{S})$  denote  the  vector space of
bounded and Lipschitz continuous maps $f:\mathcal{S}\to \mathbb{R}$. The space $\Lip([0,T]\times [0,1],\mathbb{R}^{\vert I \vert})+BV((0,T),\mathbb{R}^{\vert I \vert})$ denotes the set of functions with values in $\mathbb{R}^{\vert I \vert}$ that are the sum of a function in $\Lip([0,T]\times [0,1],\mathbb{R}^{\vert I \vert})$ and a function in $BV((0,T),\mathbb{R}^{\vert I \vert})$, where $BV((0,T),\mathbb{R}^{\vert I \vert})$ is the set of vector-valued functions of bounded variation function defined on $(0,T)$ \cite[Definition 3.1]{ambrosio2000functions}, i.e. the set of functions $f$ in $L^1((0,T),\mathbb{R}^{\vert I \vert})$ such that there exists a finite vector-valued Radon measure $\nu$ in $\mathcal{M}((0,T),\mathbb{R}^{\vert I\vert })$  satisfying for any $\phi\in C_c^\infty((0,T),\mathbb{R}^{\vert I \vert})$
\begin{equation*}
    \sum_{i\in I}\int_0^Tf_i(t)\partial_t\phi_i(t)dt=-\sum_{i\in I}\int_0^T\phi_i(t)\nu_i(dt).
\end{equation*}
We denote by $\W$ the $1$-Wasserstein distance on $\mathcal{P}([0,1]\times I)$, defined by 
\begin{equation*}
      \W(\mu,\rho):=\sup\,\bigg\{\sum_{i\in I}\int_{0}^1\varphi_i(\mu_i-\rho_i)\,\vert\,\varphi\mbox{ is }1-\mbox{Lipschitz from }[0,1]\times I\mbox{ to }\mathbb{R}\bigg\}.
\end{equation*}
We recall that if a function $\varphi$ is $1$-Lipschitz continuous from $[0,1]\times I$ to $\mathbb{R}$, then $\vert \varphi_i(x)-\varphi_j(x)\vert \leq 1$ for any $i,j\in I$. 
The space $C^0([0,T],\mathcal{P}([0,1]\times I))$ is the set of continuous functions from $[0,T]$ to $\mathcal{P}([0,1]\times I)$, where $\mathcal{P}([0,1]\times I)$ is endowed with the distance $\W$.
For any $\mu \in C^0([0,T],\mathcal{P}(\mathbb{R}))$, let
\begin{equation*}
    L^2_{\mu\otimes dt}([0,T]\times \mathbb{R})   :=\bigg\{f:[0,T]\times \mathbb{R}\mapsto \mathbb{R},\, \int_0^T\int_0^1f(t,s)^2\mu(t,ds)dt<+\infty\bigg\},
\end{equation*}
where $\mu\otimes dt$ denotes the product measure between $\mu$ and the Lebesgue measure on $\mathbb{R}_+$.
The dual of a normed space $X$ is denoted by $X^\ast$. {\color{black}For any $x\in X$ and $y\in X^\ast$, we define the pairing $\langle y,x\rangle_{X^\ast,X}$ by $\langle y,x\rangle_{X^\ast,X}:=y(x)$.} For any real valued function $F$ defined on $\mathbb{R}$, we denote by $\pmb{F}$ the Fenchel conjugate of $F$, defined on $\mathbb{R}$ by
\begin{equation*}
    \pmb{F}(y):=\underset{x\in \mathbb{R}}{\sup}\,xy-F(x).
\end{equation*}

 We consider the space 
 \begin{equation}\label{def_omega}
     \Omega:=\mathcal{P}( [0,1]\times I)\times C^0( [0,T],(\mathbb{R}_+^\ast)^{\vert I \vert}).
 \end{equation}
For any $\varepsilon>0$, we define the subspace $\Omega_\varepsilon$ of $\Omega$, by
\begin{equation}
\label{def_omega_eps}
    \Omega_\varepsilon:=\bigg\{(\mu,f)\in \Omega\,\vert \, \varepsilon<\inf_{t\in [0,T],i\in I}f_i(t)-\mu_i([0,1])\bigg\}.
\end{equation}
 We consider the following spaces for any $\delta>0$:
    \begin{equation}\label{def_space_delta}
    \begin{array}{ll}
     \mathcal{M}_{\delta}^+([0,T],\mathbb{R}_+^{\vert I \vert })   & :=  \bigg\{\mu\in \mathcal{M}^+([0,T],\mathbb{R}_+^{\vert I \vert}) \,\vert\,\sum_{i\in I}\mu_i([0,T])\leq \delta\bigg\},\vspace{0.2cm}\\
      C^0_\delta([0,T],\mathbb{R}_+^{\vert I \vert})&:= \bigg\{f\in C^0([0,T],\mathbb{R}_+^{\vert I \vert}) \,\vert\,\int_0^T\sum_{i\in I}f_i(t)dt\leq \delta\bigg\}.
    \end{array}
    \end{equation}
For any $\delta>0$, the space $C^0_\delta([0,T],\mathbb{R}_+^{\vert I \vert})$ can be considered as a subspace of $\mathcal{M}_{\delta}^+([0,T],\mathbb{R}_+^{\vert I \vert })$ in the sense that, for any $f\in C^0_\delta([0,T],\mathbb{R}_+^{\vert I \vert})$, we have $f\mathcal{L}\in  \mathcal{M}_{\delta}^+([0,T],\mathbb{R}_+^{\vert I \vert })$, where $\mathcal{L}$ is the Lebesgue measure on $[0,T]$.
\newline

\textbf{Assumptions} The following assumptions are in force throughout the paper.
\begin{enumerate}
\item \label{hyp_on_b} For any $i\in I$, $b_i\in C^1(\mathbb{R})$ with $b_i(s)=0$ for any $s \not\in (0,1)$. 
\item  \label{hyp_on_m_0} The initial distribution of \eqref{fk} $m^0\in \mathcal{P}(\mathbb{R}\times I)$ is such that $\supp(m_i^0)\subset [0,1]$ for any $i\in I$ . 
\item \label{hyp_on_D} There exists $\varepsilon^0>0$ such that the parameter $D$ of \eqref{congestion_ineq_D} and $m^0$ satisfy $(m^0,D)\in \Omega_{\varepsilon^0}$.
\item \label{hyp_on_c_g}For any $i\in I$, it is assumed that $c_i\in C^1([0,T]\times [0,1])$ and $g_i\in C^1([0,1])$.
\item \label{assum_l} $L:\mathbb{R}\to \bar{\mathbb{R}}$ is a convex function, defined by:
    $$
    L(x):=\left\{
    \begin{array}{ll}
       l(x)  & \mbox{if }x>0,  \\
       0  & \mbox{if }x=0,\\
       +\infty & \mbox{otherwise,}
    \end{array}\right.
    $$ where $l\in C^1(\mathbb{R_+},\mathbb{R_+})$ is an increasing strongly convex function satisfying $\underset{x\to 0}{\lim}l(x) = 0$, and bounded from above by a quadratic function. More explicitly, there exists $C>0$ such that for any $x\in \mathbb{R_+}$:
    \begin{equation*}
     {\color{black}\frac{x^2}{C} }\leq  l(x)\leq C( x^2 +1) ,
    \end{equation*}
    where the first inequality is due to the strong convexity of $l$.
\end{enumerate}
The function $H$ being the Fenchel conjugate of $L$, by Assumption \ref{assum_l}, $H$ is  non-decreasing and non-negative. Further, since $L$ is strongly convex on $\mathbb{R}_+$ and $H'$ equals $0$ on $\mathbb{R}^-$, $H'$ is Lipschitz continuous on $\mathbb{R}$ \cite[Theorem  E.4.2.1]{hiriart2004fundamentals}.

By Assumption \ref{hyp_on_b}, for any $s\in[0,1]$, $i\in I$ and $t\in [0,T]$, there exists a unique $S_i^{t,s}$ satisfying the ODE below
\begin{equation}
\label{ODE}
\begin{cases}
     \frac{\mbox{d}S_i^{t,s}(\tau)}{\mbox{d}\tau}=b_i(S_i^{t,s}(\tau)),\quad \tau\in [0,T], \vspace{0.2cm}\\
    S_i^{t,s}(t)=s  . 
\end{cases}
\end{equation}
\begin{remark}
The main role of Assumptions \ref{hyp_on_b} and \ref{hyp_on_m_0} is to ensure that the support of the weak solution of \eqref{fk} is contained in $[0,1]$ over the period $[0,T]$ (cf. Lemma \ref{supportm} in Appendix \ref{appendices_section}).
Assumption \ref{hyp_on_D} provides an estimate on $\sum_{i\in I}\lambda_i([0,T])$ for any weak solution $(\varphi, \lambda, m)$ of \eqref{syst_opt_cond_intro} and ensures that the control $\alpha=0$ is an admissible control. Correspondingly, the feasible set of Problem \eqref{opt:J} is not empty. Regularity results of the weak solutions of the system \eqref{syst_opt_cond_intro} are derived thanks to the assumptions formulated on $c$ and $g$ in Assumption \ref{hyp_on_c_g}. 
\end{remark}

\subsection{Main results}
We introduce, for a given $\lambda\in \mathcal{M}^+([0,T],\mathbb{R}_+^{\vert I \vert})$, the Hamilton-Jacobi equation on $(0,T)\times (0,1)\times I$:
\begin{equation}
\label{hjb_0}
\begin{array}{ll}
- \partial_ t\varphi_i(t,s)-b_i(s)\partial_s\varphi_i(t,s)-c_i(t,s)-\lambda_i(t)+\sum_{j\in I,j\neq i}H((\varphi_j-\varphi_i)(t,s))= 0 & (t,s,i)\in (0,T)\times (0,1)\times I,\\
\varphi_i(T,s)= g_i(s) +\lambda_i(\{T\})& (s,i)\in [0,1]\times I,
\end{array}
\end{equation}
where $\lambda_i(\{T\}):=\lambda_i([T,T])$ in the final condition of \eqref{hjb_0} denotes the measure of $\{T\}$ w.r.t. $\lambda_i$ and the term $\lambda_i(t)$ in the Hamilton-Jacobi equation of \eqref{hjb_0} is an abuse of notation. 
We are mainly interested in weak solutions of equation \eqref{hjb_0} (see Definition \ref{def_weak_sol_hjb}).
We define the function $\tilde{A}$ for any $(\varphi, \lambda)\in \big(\Lip([0,T]\times [0,1],\mathbb{R}^{\vert I \vert})+BV([0,T],\mathbb{R}^{\vert I \vert})\big)\times \mathcal{M}^+([0,T],\mathbb{R}_+^{\vert I \vert})$ by:
\begin{equation}
\label{def_tilde_A_0}
\tilde{A}(\varphi,\lambda):=
\sum_{i \in I}\int_0^1  - \varphi_i(0^+,s)m_i^0(ds) + \int_0^TD_i(t)\lambda_i(dt).
\end{equation}
The following theorem summarizes the main results of the paper.
\begin{theorem}\label{main_results}
Problem \eqref{opt:J} has a solution.
Furthermore, the minimizers have the following properties:
\begin{enumerate}
\item\label{min_imp_wea} If $(m,\alpha)$ is a minimizer of Problem \eqref{opt:J} and $(\varphi, \lambda)\in \Big(\Lip([0,T]\times [0,1],\mathbb{R}^{\vert I \vert})+BV([0,T],\mathbb{R}^{\vert I \vert})\Big)\times \mathcal{M}^+([0,T],\mathbb{R}_+^{\vert I \vert})$ is such that $\tilde{A}(\varphi,\lambda)=-J(m,\alpha)$, then $(\varphi, \lambda, m)$ is a weak solution of \eqref{syst_opt_cond_intro} in the sense of Definition \ref{weak_sol_syst}, and $\alpha_{i,j}=H'(\varphi_i-\varphi_j)$ on $\{m_i>0\}$ for any $i,j\in I$.
\item \label{wea_imp_min} Conversely, if $(\varphi,\lambda,m)$ is a weak solution of \eqref{syst_opt_cond_intro} in the sense of Definition \ref{weak_sol_syst}, then there exists $\alpha$, defined for any $i,j\in I$ by: $\alpha_{i,j}:=H'(\varphi_i-\varphi_j)$ on $\{m_i>0\}$, such that $(m,\alpha)$ is a minimizer of  \eqref{opt:J} and $\tilde{A}(\varphi,\lambda)=-J(m,\alpha)$.
\item \label{min_imp_reg}If $(m,\alpha)$ is a minimizer of Problem \eqref{opt:J}, then for any $i,j\in I,$ $\alpha_{i,j}$ and $\partial_s\alpha_{i,j}$ are both in $L^\infty([0,T]\times [0,1])$, and $m$ is in $\Lip([0,T],\mathcal{P}( [0,1]\times I))$.
\end{enumerate}
\end{theorem}
The existence of a solution of Problem \eqref{opt:J} is stated in Lemma \ref{Existence_sol_prob_init}.
The characterization of a solution, i.e., Theorem \ref{main_results}.\ref{min_imp_wea}-\ref{wea_imp_min}, is given by Theorem \ref{optimality_conditions}. For this, we adopt a duality approach which is developed in Section \ref{dual_prob_section}. In particular, we introduce a convex problem, the dual of which is Problem \eqref{opt:J} up to a change of variable (cf. Theorem \ref{pb_in_duality}). The Lipschitz continuity of $m$ stated in Theorem \ref{main_results}.\ref{min_imp_reg} is deduced from the regularity of $\varphi$, which is derived in Section \ref{hjb_section}, and Proposition \ref{weak_sol_fk_coro}.
The proof of Theorem \ref{main_results} is finally given in Section \ref{proof_main_th}.

Further regularity results on the solution of Problem \eqref{opt:J} are obtained with additional conditions on the initial distribution $m^0$.
\begin{proposition}\label{reg_m_sol}
If the initial distribution $m^0$ is absolutely continuous w.r.t. the Lebesgue measure with a density in $C^1([0,1],\mathbb{R}^{\vert I \vert})$, then any solution $(m,\alpha)$ of Problem \eqref{opt:J} is such that $m$ is absolutely continuous w.r.t. the Lebesgue measure on $[0,T]\times [0,1]$, and has a density in $\Lip([0,T]\times [0,1],\mathbb{R}^{\vert I \vert})$. 
\end{proposition} 
The proof of Proposition \ref{reg_m_sol} is given in Section \ref{proof_main_th}.

\section{Existence of an optimal solution}\label{prob_formulation_section}

We introduce in this section a convex problem that is equivalent to \eqref{opt:J}. Standard compactness arguments are used to show the existence of an optimal solution (see for e.g. \cite[Theorem 3.1]{daudin2022optimal}).
The definition of a weak solution of \eqref{fk} is specified below.

\label{prob_formulation}
\begin{definition}
\label{weak_sol_cont_equ}
A pair $(m,\alpha)$ satisfies \eqref{fk} in the weak sense if $m$ is in $C^0([0,T],\mathcal{P}(\mathbb{R}\times I))$, for any $i,j\in I$ with $i\neq j$, it holds that $\alpha_{i,j}\in L_{m_i\otimes dt}^2([0,T]\times \mathbb{R})$ and for any test function $\phi \in C^\infty_c([0,T]\times \mathbb{R},\mathbb{R}^{\vert I \vert})$, we have:
\begin{equation*}
\begin{array}{l}
\sum_{i\in I}\int_{\mathbb{R}} \phi_i(T,s)m_i(T,ds) - \phi_i(0,s)m_i^0(ds)   \\
  =  \int_0^T\int_{\mathbb{R}} \sum_{i\in I}
  (\partial_t\phi_i(t,s) + b_i(s)\partial_s\phi_i(t,s))m_i(t,ds) + 
  \sum_{j\in I, j\neq i}(\phi_j(t,s)-\phi_i(t,s))\alpha_{i,j}(t,s)m_i(t,ds)dt.
\end{array}
\end{equation*}
\end{definition}
\begin{remark}
Using Assumptions \ref{hyp_on_b} and \ref{hyp_on_m_0}, Lemma \ref{supportm} in Appendix \ref{appendices_section} states that, for any weak solution $(m,\alpha)$ of \eqref{fk} in the sense of Definition \ref{weak_sol_cont_equ}, the measure $m_i(t,\cdot)$ has its support contained in $[0,1]$ for any $(t,i)\in [0,T]\times I$. Thus, we will consider throughout the paper  only weak solutions $(m,\alpha)$ of \eqref{fk} satisfying $m(t,\cdot)\in \mathcal{P}([0,1]\times I)$ for any $t\in [0,T]$.
\end{remark}
Problem \eqref{opt:J} being not convex  w.r.t. the variables $(m,\alpha)$, we make a change of variables $E_{i,j}:=\alpha_{i,j} m_i$, for any $i,j\in I$ with $i\neq j$. We now rewrite the continuity equation \eqref{fk}:
\begin{equation}
\label{evoX}
\begin{array}{ll}
\partial_t m_i(t,s)+\partial_s(m_i(t,s)b_i(s))
=-\sum_{j\in I,j\neq i}(E_{i,j}(t,s)- E_{j,i}(t,s))& (i,t,s)\in I\times (0,T)\times (0,1)\\
m_i(0,s) =  m_i^0(s)& (i,s)\in I\times [0,1],
\end{array}
\end{equation}
where $E_{i,j}\in\mathcal{M}^+([0,T]\times[0,1])$ is such that $E_{i,j}\ll m_i\otimes dt$, with $\frac{\mbox{d}E_{i,j}}{ \mbox{d} m_i\otimes dt} = \alpha_{i,j}$ and $\frac{\mbox{d}E_{i,j}}{ \mbox{d} m_i\otimes dt} \in L_{m_i\otimes dt}^2([0,T]\times [0,1])$.
For any $(m^0,D)\in \Omega$ ($\Omega$ is defined in \eqref{def_omega}) satisfying Assumption~\ref{hyp_on_D}, we introduce the set:
\begin{equation}
\begin{array}{ll}
\label{def_set_CE}
\mathcal{S}(m^0,D):=&
\bigg\{ (m,E)\mbox{ such that: }E_{i,j}\ll m_i\otimes dt\, \,\forall i,j\in I,\,\alpha\geq 0\mbox{ and }(m,\alpha) \mbox{ satisfies }\eqref{evoX}.\\
&\,\mbox{ in the weak sense, where }\alpha_{i,j}:= \frac{\mbox{d}E_{i,j}}{ \mbox{d} m_i\otimes dt},\,\mbox{ and }m
\mbox{ satisfies }\eqref{congestion_ineq_D} \bigg\}.
\end{array}
\end{equation}
By the disintegration theorem \cite[Theorem 5.3.1]{ambrosio2008gradient}, for any $(m,E)\in \mathcal{S}(m^0,D)$ and $i,j\in I$, we can write $E_{i,j}(dt,ds)=E_{i,j}(t,ds)dt$.
From Assumption \ref{hyp_on_D}, the set $\mathcal{S}(m^0,D)$ is not empty. Indeed, denoting by $(m,0)$ a weak solution of \eqref{fk} with control $\alpha\equiv 0$, the distribution $m$ satisfies that, for any $(t,i)\in [0,T]\times I$,  $m_i(t,[0,1])=m_i^0([0,1])<D_i(t)$. Thus, $(m,0)\in \mathcal{S}(m^0,D)$.
We define the function $\tilde{J}$ on $\mathcal{S}(m^0,D)$ by:
\begin{equation}
\label{def_B}
\tilde{J}(m,E): =
\sum_{i\in I} \int_0^T\int_0^1 c_i(t,s)  m_i(t,ds) dt+ \int_0^T\int_0^1 \sum_{i,j\in I,j\neq i}L \Big(\frac{\mbox{d}E_{i,j}}{ \mbox{d} m_i\otimes dt}(t,s)\Big)m_i(t, ds)dt + \sum_{i\in I}\int_0^1 g_i(s)m_i(T,ds),
\end{equation}
where the function $L$ is defined in Assumption \ref{assum_l}. Since $m(t)$ is a probability measure for any $t\in[0,T]$, by Assumption \ref{hyp_on_c_g}, the first and last integrals in \eqref{def_B} are well defined. Since $\frac{\mbox{d}E_{i,j}}{ \mbox{d} m_i\otimes dt} \in L_{m_i\otimes dt}^2([0,T]\times [0,1])$ and $l$ is bounded by a quadratic function according to Assumption \ref{assum_l}, the second integral in \eqref{def_B} is also well defined. Thus, for any $(m,E)\in \mathcal{S}(m^0,D)$, the quantity $\tilde{J}(m,E)$ is finite.
The following optimization problem is considered:
\begin{equation}
\label{problemE}
 \underset{(m,E)\in \mathcal{S}(m^0,D)}{\inf}\,\tilde{J}(E,m).
\end{equation}
For any $\gamma >0$, we define the subset $\mathcal{S}_\gamma(m^0,D)$ of $\mathcal{S}(m^0,D)$ by:
\begin{equation}
\label{ineqGamma0}
\mathcal{S}_\gamma(m^0,D):=\bigg\{(m,E)\in \mathcal{S}(m^0,D)\,\Big\vert\,  \sum_{(i,j)\in I,i\neq j}\int_0^T\int_0^1 L \Big(\frac{\mbox{d}E_{i,j}}{ \mbox{d} m_i\otimes dt}(t,s)\Big)m_i(t, ds)dt \leq \gamma\bigg\}.
\end{equation}
For any  $(m,E)\in\mathcal{S}_\gamma(m^0,D)$, the next lemma states that $m$ is $\frac{1}{2}$-Hölder continuous, i.e. there exists $C>0$ such that for any $t_1,t_2\in [0,T]$
\begin{equation*}
   \mathcal{W}(m(t_1),m(t_2))\leq C\sqrt{\vert t_1-t_2\vert}.
\end{equation*} 
\begin{lemma}
\label{UnifCont}
For any $\gamma> 0$, there exists a positive constant $C_\gamma$ such that, for any $(m,E)\in \mathcal{S}_\gamma(m^0,D)$, $m$ is $\frac{1}{2}$-Hölder continuous with constant $C_\gamma$ from $[0,T]$ to $\mathcal{P}([0,1]\times I)$ (endowed with the $1$-Wasserstein distance $\mathcal{W}$).
\end{lemma}

\begin{proof}
Let $\varphi:[0,1]\times I\mapsto \mathbb{R}$ be globally $1-$Lipschitz continuous and continuously differentiable w.r.t. the continuous variable. Since $(m,E)$ is a weak solution of \eqref{evoX}, we have, for any $t_1,t_2\in [0,T]$ with $t_1<t_2$,
\begin{equation*}
    \sum_{i\in I}\int_0^1\varphi_i(s)\big(m_i(t_2,ds)-m_i(t_1,ds)\big)=
   \sum_{i\in I} \int_{t_1}^{t_2} \int_0^1b_i(s)\partial_s\varphi_i(s) m_i(t,ds)dt
+
 \sum_{i,j\in I,j\neq i}\int_{t_1}^{t_2} \int_0^1(\varphi_j(s)-\varphi_i(s))E_{i,j}(t,ds)dt.
\end{equation*}
Since $\varphi$ is globally $1-$Lipschitz continuous, $\Vert \partial_s \varphi\Vert_\infty\leq 1$, $\vert \varphi_j(s)-\varphi_i(s)\vert \leq 1$ and $E$ is a non-negative measure, then previous equality becomes
\begin{equation}\label{ineq_for_holder}
  \left\vert  \sum_{i\in I}\int_0^1\varphi_i(s)\big(m_i(t_2,ds)-m_i(t_1,ds)\big)\right\vert\leq \vert t_2-t_1\vert \Vert b\Vert_\infty
+
 \sum_{i,j\in I,j\neq i}\int_{t_1}^{t_2} \int_0^1E_{i,j}(t,ds)dt.
\end{equation}
We need to find an upper bound of $ \sum_{i,j\in I,j\neq i}\int_{t_1}^{t_2} \int_0^1E_{i,j}(t,ds)dt$. By Cauchy--Schwartz inequality, one has
\begin{equation*}
\left(\int_{t_1}^{t_2}\int_0^1 E_{i,j}(t,ds)dt \right)^2
     \leq(t_2-t_1)\int_{t_1}^{t_2} \int_0^1\sum_{i,j\in I,j\neq i}\left(\frac{\mbox{d}E_{i,j}}{ \mbox{d} m_i\otimes dt}(t,s)\right)^2m_i(\tau,ds)d\tau,
\end{equation*}
{\color{black}
and by Assumption \ref{assum_l} on $L$, 
\begin{equation}\label{bound_E}
\begin{array}{ll}
\left(\int_{t_1}^{t_2}\int_0^1 E_{i,j}(t,ds)dt \right)^2
     & \leq C ({t_2}-{t_1})\int_0^T\int_{0}^1 {\color{black}\sum_{i,j\in I,j\neq i}}L \Big(\frac{\mbox{d}E_{i,j}}{ \mbox{d} m_i\otimes dt}(t,s)\Big)m_i(t, ds)dt
     \\& \leq C({t_2}-{t_1})\gamma,
\end{array}
\end{equation}
where the second inequality used that $(m,E)$ is in $S_\gamma(m^0,D)$ (defined in \eqref{ineqGamma0})}.
Then, by previous inequality, inequality \eqref{ineq_for_holder} becomes
\begin{equation*}
      \left\vert  \sum_{i\in I}\int_0^1\varphi_i(s)\big(m_i(t_2,ds)-m_i(t_1,ds)\big)\right\vert\leq \sqrt{\vert t_2-t_1\vert}\big(\sqrt{T} \Vert b\Vert_\infty +\sqrt{C\gamma }\big).
\end{equation*}
Setting $C_\gamma: =\big(\sqrt{T} \Vert b\Vert_\infty +\sqrt{C\gamma }\big)$, one deduces from previous inequality
\begin{equation*}
\mathcal{W}(m(t_2,\cdot),m({t}_1,\cdot))\leq 
C_\gamma\sqrt{\vert t_2-t_1\vert}.
\end{equation*}
 \end{proof}
By taking $t_1=0$ and $t_2=T$ in \eqref{bound_E}, one has for any $(m,E)$ in $\mathcal{S}_\gamma(m^0,D)$ that 
\begin{equation}
    \label{bound_mass_E}
   \sum_{i,j}E_{i,j}([0,T],[0,1])\leq \sqrt{CT\gamma}. 
\end{equation}
The next lemma is useful to show that any minimizing sequence of Problem \eqref{problemE} is relatively compact.
\begin{lemma}
\label{RelativCompact}
For any $\gamma>0$, the subset $\mathcal{S}_\gamma(m^0,D)$ is compact in $C([0,T],\mathcal{P}([0,1]\times I))\times \mathcal{M}^+([0,T]\times [0,1],\mathbb{R}^{\vert I \vert \times \vert I \vert })$, where $\mathcal{M}^+([0,T]\times [0,1],\mathbb{R}^{\vert I \vert \times \vert I \vert })$ is endowed with the weak$^\ast$ convergence.
\end{lemma}
\begin{proof}
Let $\{(m^n,E^n)\}_n$ be a sequence in $\mathcal{S}_\gamma(m^0,D)$. By inequality \eqref{bound_mass_E}, the sequence $\{E^n\}_n$ is relatively compact \cite[Theorem 1.59]{ambrosio2000functions} and converges weakly$^\ast$, up to a subsequence, to $\tilde{E}$ in $\mathcal{M}^+([0,T]\times [0,1],\mathbb{R}^{\vert I \vert \times \vert I \vert })$. By Lemma \ref{UnifCont}, the sequence $\{m^n\}_n$ is uniformly continuous and by Arzela–Ascoli theorem converges uniformly, up to a subsequence, to $\tilde{m}$ in 
$C\big([0,T],\mathcal{P}([0,1]\times I\big)$. By the linearity of the continuity equation \eqref{evoX} and the congestion constraint \eqref{congestion_ineq_D}, $(\tilde{m},\tilde{E})$ is a weak solution of \eqref{evoX} and satisfies \eqref{congestion_ineq_D}. By assumptions \ref{hyp_on_c_g} and \ref{assum_l}, the functional $\tilde{J}$, defined in \eqref{def_B}, is lower-semicontinuous (l.s.c. for short) \cite[Theorem 2.34]{ambrosio2000functions}. 
Thus, by the definition of $L$ in Assumption \ref{assum_l}, we have, for any $i,j\in I$, $\tilde{E}_{i,j}\ll \tilde{m}_{i}$ and $\gamma\geq \underset{n\to \infty}{\lim {\color{black}\inf}}\tilde{J}({m}^n,{E}^n)\geq \tilde{J}(\tilde{m},\tilde{E})$. Thus, $(\tilde{m},\tilde{E})$ is in $\mathcal{S}_\gamma(m^0,D)$.
\end{proof}

\begin{lemma}\label{Existence_sol_prob_init}
Problem \eqref{problemE} admits a solution. Consequently, Problem \eqref{opt:J} has a solution.
\end{lemma}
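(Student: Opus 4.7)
The strategy is to apply the direct method of the calculus of variations, relying on the compactness Lemma \ref{RelativCompact} and on the lower semi-continuity of $\Theta$ noted in its proof. First, the infimum in \eqref{problemE} is finite: by Assumption \ref{hyp_on_D}, the pair $(\rho,0)$ belongs to $CE(m^0,D)$, and $\tilde{B}(0,\rho)$ is finite under Assumption \ref{hyp_on_c_g}. Moreover, $\tilde{B}$ is bounded from below uniformly on $CE(m^0,D)$, since $L\geq 0$ and both $\sum_i\int_0^T\int_0^1 c_i(t,s)\,m_i(t,ds)\,dt$ and $\sum_i\int_0^1 g_i(s)\,m_i(T,ds)$ are controlled by $\Vert c\Vert_\infty$, $\Vert g\Vert_\infty$ and the $\Vert D\Vert_\infty$ bound on the total mass.

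Let $\{(m^n,E^n)\}_n\subset CE(m^0,D)$ be a minimizing sequence. Since the sequence $\tilde{B}(E^n,m^n)$ is bounded from above (eventually by $\tilde{B}(0,\rho)+1$) and the $c$- and $g$-terms are uniformly bounded above and below, the kinetic term
\begin{equation*}
\Theta(m^n,E^n):=\sum_{i,j\in I,\,j\neq i}\int_0^T\int_0^1 L\!\left(\tfrac{\mathrm{d}E^n_{i,j}}{\mathrm{d}m^n_i}(t,s)\right)m^n_i(t,ds)\,dt
\end{equation*}
is bounded by some constant $\gamma>0$, uniformly in $n$. Hence $(m^n,E^n)\in CE_\gamma(m^0,D)$ for $n$ large enough.

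By Lemma \ref{RelativCompact}, up to extraction, $(m^n,E^n)$ converges weakly to some $(\tilde m,\tilde E)\in CE(m^0,D)$, and Lemma \ref{UnifCont} further gives that $m^n\to \tilde m$ uniformly in $t\in[0,T]$ with respect to $W_1$. It remains to establish the lower semi-continuity inequality $\tilde{B}(\tilde E,\tilde m)\leq \liminf_n \tilde{B}(E^n,m^n)$. The two linear terms pass to the limit by continuity: for any $t\in[0,T]$ the test function $s\mapsto c_i(t,s)$ is continuous and bounded on $[0,1]$, so the $W_1$-uniform convergence in $t$ combined with dominated convergence yields the convergence of $\sum_i\int_0^T\int_0^1 c_i\, m^n_i(t,ds)\,dt$ to the corresponding integral against $\tilde m$, and similarly for the terminal term involving $g$. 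The kinetic term $\Theta$ is lower semi-continuous under the joint weak convergence of $(m^n,E^n)$ by \cite[Proposition 5.18]{santambrogio2015optimal}, precisely the statement already invoked in the proof of Lemma \ref{RelativCompact}. Combining these three ingredients gives $\tilde{B}(\tilde E,\tilde m)\leq \liminf_n \tilde{B}(E^n,m^n)=\inf_{CE(m^0,D)}\tilde{B}$, so $(\tilde m,\tilde E)$ is a minimizer.

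The only delicate point is the passage to the limit in the $c$- and $g$-terms, which are linear in $m$ but integrated against a continuous (not necessarily compactly supported) density; the uniform $W_1$-convergence provided by Lemma \ref{UnifCont} together with the compactness of the support in $[0,1]$ (Lemma \ref{supportm}) handles this routinely. The rest is a standard direct-method argument.
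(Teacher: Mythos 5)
Your proof is correct and follows essentially the same route as the paper: a minimizing sequence has a uniformly bounded kinetic term (since the $c$- and $g$-terms are controlled by $\Vert c\Vert_\infty$, $\Vert g\Vert_\infty$ and the bounded total mass), hence lies in $CE_\gamma(m^0,D)$ for some $\gamma>0$, so Lemma \ref{RelativCompact} yields a weak limit, and the weak lower semi-continuity of $\tilde{B}$ concludes. You merely make explicit what the paper states tersely, namely the decomposition of the l.s.c. of $\tilde{B}$ into continuity of the linear terms (via the uniform $W_1$-convergence from Lemma \ref{UnifCont}) and l.s.c. of the kinetic term via \cite[Proposition 5.18]{santambrogio2015optimal}, which is a welcome elaboration rather than a different argument.
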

\begin{proof}
Existence of a solution of \eqref{problemE} is a consequence of Lemma \ref{RelativCompact} and of the l.s.c.  of  $\tilde{J}$  w.r.t. the topology induced by $C([0,T],\mathcal{P}([0,1]\times I))\times \mathcal{M}^+([0,T]\times [0,1],\mathbb{R}^{\vert I \vert \times \vert I \vert })$ (where $\mathcal{M}^+([0,T]\times [0,1],\mathbb{R}^{\vert I \vert \times \vert I \vert })$ is endowed with the weak$^\ast$ convergence). 
Since Problem \eqref{opt:J} is equivalent to Problem \eqref{problemE} up to a change of variable, the existence of a solution is straightforward.
\end{proof}

\section{Analysis of the Hamilton-Jacobi equation}\label{hjb_section}

The purpose of this section is to study weak solutions of the Hamilton-Jacobi equation \eqref{hjb_0} when $\lambda$ is in $\mathcal{M}^+([0,T],\mathbb{R}_+^{\vert I \vert})$. It is divided into three subsections.
Section \ref{cont_val_data} is devoted to the analysis of the following equation
\begin{equation}
\label{hjb}
\begin{array}{ll}
- \partial_ t\psi_i(t,s)-b_i(s)\partial_s\psi_i(t,s)-c_i(t,s)+\sum_{j\in I,j\neq i}H\bigg((\psi_i-\psi_j)(t, s)+\int_t^T(\lambda_i-\lambda_j)(dr)\bigg)= 0 & \mbox{on }(0,T)\times (0,1)\times I,\\
\psi_i(T,\cdot)= g_i & \mbox{on } [0,1]\times I.
\end{array}
\end{equation}
for a given $\lambda\in C^0_\delta([0,T],\mathbb{R}_+^{\vert I \vert})$. A classical solution $\psi$ of \eqref{hjb} associated with a function $\lambda$ in $C^0_\delta([0,T],\mathbb{R}_+^{\vert I \vert})$ is a function in $C^1([0,T]\times [0,1],\mathbb{R}^{\vert I \vert})$ satisfying \eqref{hjb} for any $(t,s,i)$ in $[0,T]\times [0,1]\times I$.
Then, equation \eqref{hjb} is studied for a given $\lambda\in \mathcal{M}^+_\delta([0,T],\mathbb{R}^{\vert I \vert})$ in Section \ref{lambda_measure_valued_data}. Finally, we obtain in Section \ref{existence_measure_weak_solution} the existence and the uniqueness of a weak solution to \eqref{hjb_0} by using the solution of \eqref{hjb}.

For any $\lambda\in \mathcal{M}^+([0,T],\mathbb{R}_+^{\vert I \vert})$, 
$\psi\in L^2([0,T]\times [0,1],\mathbb{R}^{\vert I \vert}),(t,\tau,s,i,j)\in [0,T]^2\times [0,1]\times I^2$, we consider in this section the function $H^\lambda$, defined by:
        \begin{equation}
        \label{def_h_lambda_measure}
    H^\lambda(i,j,t,\tau,s,\psi):=H\bigg((\psi_i-\psi_j)(\tau, S_i^{t,s}(\tau))+\int_\tau^T(\lambda_i-\lambda_j)(dr)\bigg),
    \end{equation}
where $S^{t,s}_i$ is the unique solution of \eqref{ODE}.

\subsection{The Hamilton-Jacobi equation for continuous data}\label{cont_val_data}

The main result of this subsection is the following.
\begin{proposition} \label{classical_sol}
For any $\lambda\in C^0_\delta([0,T],\mathbb{R}_+^{\vert I \vert})$, there exists a unique $\psi^\lambda\in C^1([0,T]\times [0,1],\mathbb{R}^{\vert I \vert})$ satisfying, for any $(t,s,i)\in [0,T]\times [0,1]\times I$,
\begin{equation}\label{def_integral_equation_2}
    \psi^\lambda_i(t,s)=\int_t^T\sum_{j\in I,j\neq i}-H^\lambda(i,j,t,\tau,s,\psi^\lambda) + c_i(\tau,S_i^{t,s}(\tau)) d\tau  + g_i(S_i^{t,s}(T)).
\end{equation}
In addition, $\psi^\lambda$ is the unique classical solution of \eqref{hjb} on $[0,T]\times [0,1]\times I$.
\end{proposition}
The next lemma states that a function $\psi$ satisfies \eqref{def_integral_equation_2} if and only if it is a classical solution of \eqref{hjb}.
\begin{lemma}\label{eq_int_pde}
For any $\lambda\in C^0([0,T],\mathbb{R}_+^{\vert I \vert})$ and $t_0\in [0,T)$, a function $\psi \in C^1((t_0,T]\times [0,1],\mathbb{R}^{\vert I \vert})$ satisfies equation \eqref{def_integral_equation_2} on $[t_0,T]\times [0,1]\times I$ if and only if it is a solution of \eqref{hjb} on $(t_0,T]\times [0,1]\times I$.
\end{lemma}
\begin{proof}
If $\psi \in C^1((t_0,T]\times [0,1],\mathbb{R}^{\vert I \vert})$ satisfies \eqref{def_integral_equation_2}, then $\psi(T,\cdot)=g$ and by computing the partial derivatives in space and time of $\psi$, one obtains that $\psi$ is a solution of \eqref{hjb} on $(t_0,T]\times (0,1)\times I$.

Conversely, if $\psi \in C^1((t_0,T]\times [0,1],\mathbb{R}^{\vert I \vert})$ is a solution of \eqref{hjb} then, by applying the method of characteristics \cite[Remark 8.1.5]{ambrosio2008gradient}, the function defined, for any $(t,s,i)\in (t_0,T]\times [0,1]\times I)$, by $\tau\mapsto \psi_i(\tau,S^{t,s}_i(\tau))$, satisfies on $(t_0,T)$:
\begin{equation}\label{dev_charac_method}
\begin{array}{ll}
  \frac{d}{d\tau}\psi_i(\tau,S^{t,s}_i(\tau))    &  =\partial_t\psi_i(\tau,S^{t,s}_i(\tau)) + b_i(S^{t,s}_i(\tau))\partial_s\psi_i(\tau,S^{t,s}_i(\tau))  \\
     & =-c_i(\tau,S^{t,s}_i(\tau))+\sum_{j\in I,j\neq i}H\bigg((\psi_i-\psi_j)(\tau,S^{t,s}_i(\tau))+\int_\tau^T(\lambda_i-\lambda_j)(r)(dr)\bigg),
\end{array}
\end{equation}
and $\psi_i(T,S^{t,s}_i(T))= g_i(S^{t,s}_i(T))$. By integrating, for any $t$ in $[t_0,T)$, equality \eqref{dev_charac_method} over $[t,T]$, one obtains that  $\psi$ is a solution of \eqref{def_integral_equation_2} on $[t_0,T]\times [0,1]\times I$.
\end{proof}

Before proving Proposition \eqref{classical_sol}, we need the following estimate on solutions of the Hamilton-Jacobi equation \eqref{hjb}.
\begin{lemma}[A priori estimate]\label{a_priori_estimate}
For any $\delta>0$, there exists $M>0$ such that,
for any $\lambda\in C_\delta^0([0,T],\mathbb{R}_+^{\vert I \vert})$ and any $t_0\in [0,T)$, if $\psi\in C^1((t_0,T]\times [0,1],\mathbb{R}^{\vert I \vert})$ satisfies \eqref{hjb} on $(t_0,T]\times (0,1)\times I$, then $\Vert \psi \Vert_\infty< M$. 
\end{lemma}
\begin{proof}
Let us define $M:=\Vert g\Vert _\infty +T(\Vert c\Vert _\infty +  \vert I \vert H(\delta) )+1$ and, for any $t \in [t_0,T]$,
\begin{equation*}
    \begin{array}{l}
         \underline{u}(t):=-\Vert g\Vert _\infty -(T-t)(\Vert c\Vert _\infty +  \vert I \vert H(\delta) ), \\
         \bar{u}(t):=\Vert g\Vert _\infty +(T-t)(\Vert c\Vert _\infty  +  \vert I \vert H(\delta)).
    \end{array}
\end{equation*}
We note that $\Vert \underline{u}\Vert_\infty<M$ and $\Vert \bar{u}\Vert_\infty<M$.
One has, for any $(t,s,i)\in (t_0,T]\times [0,1]\times I$,
\begin{equation}\label{partial_ineq_u}
    \begin{array}{cc}
   -\partial_t
\underline{u}(t)  \leq - \sum_{j\in I,j\neq i}H\bigg(\int_t^T(\lambda_i-\lambda_j)(r)dr\bigg) -c_i(t,s) \vspace{0.2cm} \\
      -\partial_t
\bar{u}(t) \geq -\sum_{j\in I,j\neq i}H\bigg(\int_t^T(\lambda_i-\lambda_j)(r)dr\bigg)+ c_i(t,s).
    \end{array}
\end{equation}
We will show that $\psi$ is bounded by $\underline{u}$ and $\bar{u}$.
For any $t\in (t_0,T]$, we define:
\begin{equation*}
    \gamma(t):=\max_{(x,i)\in [0,1]\times I}\underline{u}(t)-\psi_i(t,x)
  \quad  \mbox{ and } \quad
    (x_t,i_t)\in \underset{(x,i)\in [0,1]\times I}{\argmax} \underline{u}(t)-\psi_i(t,x).
\end{equation*}
Since $\underline{u}$ is independent of $x$ and $i$, we have $(x_t,i_t)\in \argmin_{(x,i)\in [0,1]\times I}\psi_i(t,x)$.
On the one hand, if $x_t\in \{0,1\}$, then $b_{i_t}(x_t)=0$. On the other hand, if $x_t\in (0,1)$, then $\partial_s\psi_{i_t}(t,x_t)=0$. Therefore, one has $b_{i_t}(x_t)\partial_s(\underline{u}(t)-\psi_{i_t}(t,x_t))= 0$.
Since $\underline{u}$ and $\psi$ are Lipschitz continuous in time uniformly in $(s,i)$, $\gamma$ is also Lipschitz continuous and thus differentiable a.e. on $(t_0,T]$.  Using the Envelop Theorem \cite[Theorem 1]{milgrom2002envelope}, $\gamma$ is absolutely continuous on $(t_0,T]$ and, for a.e. $t\in(t_0,T]$, inequality \eqref{partial_ineq_u} and equality \eqref{hjb} give:
\begin{equation*}
    \gamma'(t)=\partial_t(\underline{u}(t)-\psi_{i_t}(t,x_t))\geq \sum_{j\in I,j\neq i_t} H\bigg(\int_t^T(\lambda_i-\lambda_j)(r)dr\bigg)-H\bigg((\psi_i-\psi_j)(t, x_t)+\int_t^T(\lambda_i-\lambda_j)(r)dr\bigg).
\end{equation*}
Since for any $j\in I$ we have $\psi_{i_t}(t,x_t)\leq \psi_{j}(t,x_t)$ and  $\underline{u}_{i_t}(t,x_t)= \underline{u}_{j}(t,x_t)$, the fact that $H$ is non decreasing implies:
\begin{equation*}
 H\bigg(\psi_{i_t}(t,x_t) -\psi_{j}(t,x_t)+\int_t^T(\lambda_{i_t}-\lambda_j)(r)dr\bigg) \leq H\bigg(\int_t^T(\lambda_{i_t}-\lambda_j)(r)dr\bigg),
\end{equation*}
and thus, $\gamma'(t)\geq 0$. Since $\gamma(T)<0$, we deduce that $\psi>\underline{u}$ on $(t_0,T]\times [0,1]\times I$. With  similar arguments, one obtains $\bar{u}>\psi$ on $(t_0,T]\times [0,1]\times I$. Therefore, according to the definition of $\underline{u}$
 and $\bar{u}$, we have, for any $(t,s,i)\in (t_0,T]\times [0,1]\times I:$
 \begin{equation}
     -M<\psi_i(t,s)<M.
 \end{equation}
\end{proof}

Proposition \ref{classical_sol} will be proved by a fixed point argument \cite[Theorem 5.7]{brezis2011functional}. For this, we need several lemmas.
We fix the constant $M>0$ associated with $\delta>0$ from Lemma \ref{a_priori_estimate}. The constant $\kappa>0$ is defined below and depends only on $M,\Vert b'\Vert_\infty,T$ and $\vert I\vert$.
We consider the space $$C^{0,1}([0,T]\times [0,1],\mathbb{R}^{\vert I \vert}):=\bigg\{f\in C^0([0,T]\times [0,1],\mathbb{R}^{\vert I \vert})\,\vert \, \partial_sf_i\in C^0([0,T]\times [0,1],\mathbb{R}^{\vert I \vert})\bigg\},$$ 
endowed with the norm $\Vert \cdot \Vert_{0,1}^\kappa$ defined by:
    \begin{equation*}
        \Vert f \Vert_{0,1}^\kappa:=
        \Vert f \Vert_{\infty}^\kappa
        +
        \Vert \partial_s f \Vert_{\infty}^\kappa ,
          \end{equation*}
where, for any $h\in C^{0}([0,T]\times [0,1]\times I)$,
\begin{equation*}
    \Vert h \Vert_{\infty}^\kappa:= \sup_{(t,s,i)\in [0,T]\times [0,1]\times I}\vert h_i(t,s)e^{-\kappa(T-t)}\vert.
\end{equation*}
The space $(C^{0,1}([0,T]\times [0,1],\mathbb{R}^{\vert I \vert}), \Vert \cdot \Vert_{0,1}^\kappa)$ is a Banach space.
We fix $C_0>0$ and $C_1>0$ to be chosen below, where $C_0$ depends on $\Vert \partial_s c\Vert_\infty,\Vert  g'\Vert_\infty, \Vert b'\Vert_\infty$ and $C_1$ depends on
$M,\delta,T$ and $\vert I \vert$.
We look for a solution in the space $\Sigma$ defined by:
\begin{equation*}
    \Sigma:
    =\bigg\{f\in C^{0,1}([0,T]\times [0,1],\mathbb{R}^{\vert I \vert})\,\big\vert \,\Vert f\Vert_\infty\leq M+1\mbox{ and }\Vert \partial_s f(t,\cdot)\Vert_\infty\leq C_0e^{C_1(T-t)} \,\forall t\in [0,T]\bigg\}.
\end{equation*}
\begin{remark}
\label{existence_gamma}
The set $\Sigma$ is bounded and closed and therefore complete w.r.t. the topology induced by the norm $\Vert \cdot \Vert_{0,1}^\kappa$.
\end{remark}
In this subsection, we are looking for a solution of \eqref{hjb} as a fixed point of the map $\Gamma^\lambda:=(\Gamma^\lambda_i)_{i\in I}$, which is defined on $C^{0,1}([0,T]\times [0,1],\mathbb{R}^{\vert I \vert})$ by
\begin{equation}
\label{def_gamma}
\Gamma_i^\lambda(\phi)(t,s):=\int_t^T\sum_{j\in I,j\neq i}-H^\lambda(i,j,t,\tau,s,\phi) + c_i(\tau,S_i^{t,s}(\tau)) d\tau  + g_i(S_i^{t,s}(T)).
\end{equation}
Since the function $H$ is not necessarily bounded on $\mathbb{R}$, we need to introduce a smooth truncation $F\in C^1( \mathbb{R}, [-M-1,\,M+1])$ to obtain a fixed point in $\Sigma$. The function $F$ is defined on $\mathbb{R}$ such that $F'\geq 0$, $\Vert F'\Vert_\infty \leq 1$ and:
\begin{equation}
\label{def_f}
F(x):=\left\{
\begin{array}{ll}
-M-1 &\mbox{if }x\leq -(M+2),\\
x & \mbox{if }-M-1/2\leq x \leq M+1/2,\\
M +1& \mbox{if }M+2\leq x.
\end{array}
\right.
\end{equation}
Finally, we define the function $\Pi^\lambda$ by:
\begin{equation*}
\forall \phi \in \Sigma,\,\Pi^\lambda(\phi):=(\Pi^\lambda_1(\phi),\ldots,\Pi^\lambda_{\vert I \vert}(\phi)) \mbox{ where }\Pi^\lambda_i(\phi):=(F\circ \Gamma_i^\lambda)(\phi)\quad \forall i\in I.
\end{equation*}

The following lemma states that $\Pi^\lambda$ maps $\Sigma$ into itself.
\begin{lemma}
\label{impage_space}
For a suitable choice of the constants $C_0$ and $C_1$, one has $\Pi^\lambda(\phi)\in \Sigma$ for any $\phi\in \Sigma$.
\end{lemma}
\begin{proof}
Let $\phi\in \Sigma$ and,  for any $i\in I$, $\sigma_i:=\Gamma^\lambda_i(\phi)$. We have $\sigma\in C^{0,1}([0,T]\times [0,1],\mathbb{R}^{\vert I \vert})$.
We need to show that, for any $t\in [0,T]$,  $\Vert \partial_s \sigma(t,\cdot)\Vert_\infty\leq C_0e^{C_1(T-t)} $.
According to the definition of $\Gamma^\lambda_i$, we have, for any $(i,t)\in I\times [0,T]$,
\begin{equation*}
\begin{array}{ll}
    \Vert \partial_s \sigma_i(t,\cdot)\Vert_\infty & \leq \Vert  \partial_s S \Vert_\infty K\int_t^T \sum_{i,j\in I} \Vert \partial_s(\phi_j-\phi_i)(t,\cdot)\Vert_\infty  dt + T \Vert \partial_s S\Vert_\infty \Vert \partial_s c \Vert_\infty + \Vert \partial_s S\Vert_\infty \Vert g' \Vert_\infty \\
     & \leq \frac{C_0C}{C_1}e^{C_1(T-t)}+C,
\end{array}
\end{equation*}
where $K :=\sup_{x\in [-2M-\delta , 2M+\delta]}\vert H'(x)\vert$ and $C>0$ is a positive constant which depends on $\Vert \partial_s S \Vert_\infty, \Vert\partial_s c \Vert_\infty$, $\Vert g' \Vert_\infty$, $\vert I \vert,T$ and $K$.
Choosing carefully $C_0$ and $C_1$ depending on $C$, one obtains that, for any $(i,t)\in I\times [0,T]$,
$$\Vert \partial_s \sigma_i(t,\cdot)\Vert_\infty \leq C_0e^{C_1(T-t)}.$$
Finally, from the definition of $F$ in \eqref{def_f}, the function $\Pi(\phi)=(F(\sigma_1),\ldots,F(\sigma_{\vert I\vert}))$ is in $\Sigma$.
\end{proof}
The existence of a fixed point of $\Pi^\lambda$ in $\Sigma$ is established in the following lemma.
\begin{lemma}
\label{contraction_lemma}
\label{fixed_point_pi}
For a suitable choice of the constant $\kappa$, the function $\Pi^\lambda$ admits a unique fixed point $\psi^\lambda\in \Sigma$. In addition, we have $\psi^\lambda\in C^1([0,T]\times [0,1],\mathbb{R}^{\vert I \vert})$.
\end{lemma}

\begin{proof}
Let $\phi^1,\phi^2\in \Sigma$. Since $\phi^1$ and $\phi^2$ are bounded by $M+1$, one has, for any $(t,s,i)\in [0,T]\times [0,1]\times I$,
\begin{equation}\label{ineg_infty_gamma}
\begin{array}{ll}
\vert \Gamma_i^\lambda(\phi^1)(t,s)-  \Gamma_i^\lambda(\phi^2)(t,s)\vert 
& \leq \sum_{j\in I,j\neq i}\int_t^T \vert H^\lambda(i,j,t,\tau,s,\phi^1)-H^\lambda(i,j,t,\tau,s,\phi^2)\vert \, d\tau \vspace{0.1cm} \\
  & \leq  C\sum_{j\in I} \int_t^T\vert \phi^1_j(\tau, S_i^{t,s}(\tau))
  -\phi^2_j(\tau, S_i^{t,s}(\tau))\vert \,d\tau\\
 & \leq \Vert \phi^1 -\phi^2\Vert_\infty^\kappa \frac{Ce^{\kappa(T-t)}}{\kappa},
\end{array}
\end{equation}
where $C>0$ is a constant depending on $\delta,\vert I\vert$ and $\sup_{x\in [-2(M+1)-\delta , 2(M+1)+\delta]}\vert H'(x)\vert$.
Since $H'$ is Lipschitz continuous
on $[-2 (M+1),\,2(M+1)]$, one has, for any $(t,s,i)\in [0,T]\times [0,1]\times I$,
\begin{equation}
\label{local_deriv_gamma}
\begin{array}{l}
   \big\vert \partial_s\big( \Gamma_i^\lambda(\phi^1)(t,s)-  \Gamma_i^\lambda(\phi^2)(t,s)\big)\big\vert   \\
   \leq \sum_{j\in I,j\neq i}\int_t^T \left\vert \partial_s (\phi^1_i-\phi^1_j)(\tau,S_i^{t,s}(\tau))H'\Big((\phi^1_i-\phi^1_j)\big(\tau,S_i^{t,s}(\tau)\big)+\int_\tau^T(\lambda_i-\lambda_j)(r)dr\Big) \right .
   \\   \left.\hspace{2.2cm}- \partial_s (\phi^2_i-\phi^2_j)(\tau,S_i^{t,s}(\tau))H'\Big((\phi^2_i-\phi^2_j)\big(\tau,S_i^{t,s}(\tau)\big)+\int_\tau^T(\lambda_i-\lambda_j)(r)dr\Big)\right\vert d\tau \\
      \leq \Vert  \phi^1 - \phi^2\Vert_{0,1}^\kappa \frac{Ke^{\kappa(T-t)}}{\kappa},
\end{array}
\end{equation}
where $K>0$ is a constant that depends on $\vert I\vert, T, \Vert b' \Vert_\infty,M,C_0,C_1$ and on the bound and the Lipschitz constant of $H'$ on $[-2 (M+1)-\delta,\,2(M+1)+\delta]$. From \eqref{ineg_infty_gamma} and \eqref{local_deriv_gamma}, one obtains that:
$$ \Vert \Gamma^\lambda(\phi^1)-\Gamma^\lambda(\phi^2)\Vert_{0,1}^\kappa \leq \max(C/ \kappa,K/ \kappa)\Vert \phi^1 -\phi^2\Vert_{0,1}^\kappa.$$ 
Choosing $\kappa>\max(C,K)$ and the function $F$ being non-expansive, one deduces that the function $\Pi^\lambda$ is a contraction on $\Sigma$ and that it admits a unique fixed point $\psi^\lambda\in \Sigma$.  By the definitions of $\Pi^\lambda$ and $\Sigma$, it is straightforward that $\psi^\lambda\in C^1([0,T]\times [0,1],\mathbb{R}^{\vert I \vert})$.
\end{proof}

We now turn to the proof of Proposition \ref{classical_sol}.
\begin{proof}[Proof of Proposition \ref{classical_sol}]

The first step of the proof is to show that $\psi^\lambda$, the fixed point of $\Pi^\lambda$ in $\Sigma$, is also a fixed point of $\Gamma^\lambda$ in $\Sigma$. Tho this end, one needs to prove that the truncation $F$ has no effect on $\psi^\lambda$, which is equivalent to showing that $\Vert \psi^\lambda \Vert_\infty\leq M+1/2$.
Let $t_0\in[0,T]$ be the minimum time such that $\Vert\psi^\lambda(t,\cdot)\Vert_\infty\leq M+1/2$ for any $t\in [t_0,T]$. Since $\Vert \psi_i^\lambda(T,\cdot)\Vert_\infty = \Vert g_i\Vert_\infty<M$ for any $i\in I$ and $\psi^\lambda$ is continuous, the time $t_0$ is smaller than $T$. The function $\psi^\lambda$ is a fixed point of $\Gamma^\lambda$ on $[t_0,T]$ and thus, $\psi^\lambda$ is a solution of \eqref{def_integral_equation_2} on $[t_0,T]\times [0,1]\times I$. By Lemma \ref{eq_int_pde}, we deduce that $\psi^\lambda$ satisfies \eqref{hjb} on $(t_0,T]\times [0,1]\times I$. If $t_0=0$, then the conclusion follows. If $t_0>0$, then $\Vert\psi^\lambda(t_0,\cdot)\Vert_\infty=M+1/2$. By Lemma \ref{a_priori_estimate}, one also has $\Vert\psi^\lambda(t_0,\cdot)\Vert_\infty\leq M$. Hence, there is a contradiction.
Therefore, $\psi^\lambda$ is a fixed point in $\Sigma$ of $\Gamma^\lambda$. Since any fixed point of $\Gamma^\lambda$ is a fixed point of $\Pi^\lambda$ and that by Lemma \ref{fixed_point_pi}, $\Pi^\lambda$ has a unique fixed point in $\Sigma$, one deduces that $\psi^\lambda$ is the unique solution of \eqref{def_integral_equation_2}. By Lemma \ref{eq_int_pde},  $\psi^\lambda$ is the unique classical solution of \eqref{hjb} on $[0,T]\times [0,1]\times I$
\end{proof}
\begin{remark}\label{bound_derivative}
One can show that $\Vert \partial_t \psi^\lambda \Vert_\infty+ \Vert \partial_s \psi^\lambda \Vert_\infty$ is bounded by a function that is non-decreasing w.r.t. the variable $\delta$. Indeed, by Lemma \ref{impage_space} and its proof, $\Vert \partial_s \psi^\lambda\Vert_\infty$ is bounded by $C_0e^{C_1T}$ where $C_0$ and $C_1$ depend on $M$ and on $\delta$ by the definition of $M$ in Lemma \ref{a_priori_estimate}. By Lemma \ref{eq_int_pde}, $\psi^\lambda$ is a classical solution of \eqref{hjb}. Therefore,
$$\Vert \partial_t\psi^\lambda\Vert_\infty \leq \Vert b\Vert_\infty \Vert \partial_s \psi^\lambda \Vert_\infty + \Vert c\Vert_\infty + \sup_{x\in [-2M-\delta , 2M+\delta]}\vert H(x)\vert.$$
Thus, $\Vert \partial_t\psi^\lambda\Vert_\infty $ is bounded by a non-decreasing function of $\delta$.
\end{remark}

\subsection{The Hamilton-Jacobi equation for measure valued data}\label{lambda_measure_valued_data}
In this subsection, we prove the following
\begin{proposition} \label{existence_measure_fixed_point}
For any $\lambda\in\mathcal{M}^+([0,T],\mathbb{R}_+^{\vert I \vert})$, there exists a unique $\psi^\lambda\in\Lip([0,T]\times [0,1],\mathbb{R}^{\vert I \vert})$ satisfying that, for any $(t,s,i)\in [0,T]\times [0,1]\times I$,
\begin{equation}\label{int_form}
    \psi^\lambda_i(t,s)=\int_t^T\sum_{j\in I,j\neq i}-H^\lambda(i,j,t,\tau,s,\psi^\lambda) + c_i(\tau,S_i^{t,s}(\tau)) d\tau  + g_i(S_i^{t,s}(T)).
\end{equation}
In addition, the map $\lambda\mapsto \psi^\lambda$ is continuous from $\mathcal{M}^+([0,T],\mathbb{R}_+^{\vert I \vert})$, endowed with the weak$^\ast$ topology, to $C^0([0,T]\times [0,1],\mathbb{R}^{\vert  I\vert})$, endowed with the norm $\Vert \cdot \Vert_\infty$.
\end{proposition}
We recall that the definition of $H^\lambda$, with $\lambda\in \mathcal{M}^+([0,T],\mathbb{R}_+^{\vert I \vert})$, is given in \eqref{def_h_lambda_measure}.
The proof of Proposition \ref{existence_measure_fixed_point} relies on the results of the previous subsection. We define the map $\Theta:C_\delta^0([0,T],\mathbb{R}_+^{\vert I \vert})\to C^1([0,T]\times [0,1],\mathbb{R}^{\vert I \vert})$ by:
\begin{equation}
\label{def_theta}
    \Theta:\lambda\mapsto \psi^\lambda,
\end{equation}
where $\psi^\lambda$ is given by Proposition \ref{classical_sol}. We know from Proposition \ref{classical_sol} that $\Theta$ is well defined on $C^0([0,T],\mathbb{R}_+^{\vert I \vert})$. 
We want to show that $\Theta$ can be continuously extended to a function defined on $\mathcal{M}_{\delta}^+([0,T],\mathbb{R}_+^{\vert I \vert })$ with values in $\Lip([0,T]\times [0,1],\mathbb{R}^{\vert I \vert})$.
We define the distance $\mathcal{D}$ on $\mathcal{M}^+([0,T],\mathbb{R}_+^{\vert I \vert})^2$ by
\begin{equation}\label{def_metric_D}
    \mathcal{D}(\lambda,\mu):=    \int_0^T\sum_{i\in I}\left\vert \int_t^T(\lambda_i-\mu_i)(d\tau)\right\vert dt+ \sum_{i\in I}\left\vert  \int_0^T(\lambda_i-\mu_i)(dt)\right\vert .
\end{equation}
We show in Section \ref{prop_metric_d} in Appendix \ref{appendices_section}, that $\mathcal{D}$ is a distance and we provide some properties on this metric.
\begin{lemma}\label{cont_uniform}
The map $\Theta$ can be extended to a Lipschitz continuous map from $\mathcal{M}_{\delta}^+([0,T],\mathbb{R}_+^{\vert I \vert })$, endowed with distance $\mathcal{D}$, to $C^0([0,T]\times [0,1],\mathbb{R}^{\vert I \vert})$, endowed with the norm $\Vert \cdot \Vert_\infty$. In addition we have $\Theta(\lambda)\in \Lip([0,T]\times [0,1],\mathbb{R}^{\vert I \vert})$ for any $\lambda\in\mathcal{M}_{\delta}^+([0,T],\mathbb{R}_+^{\vert I \vert })$.
\end{lemma}

\begin{proof}
We need to show that there exists a constant $C>0$ such that, for any $\lambda^1,\lambda^2\in C^0_\delta([0,T],\mathbb{R}_+^{\vert I \vert})$, we have: $\Vert \Theta(\lambda^1)-\Theta(\lambda^2)\Vert_\infty\leq C\mathcal{D}(\lambda^1,\lambda^2)$. Since $H$ is locally Lipschitz, there exists a constant $K>0$ such that, for any $(t,s,i)\in [0,T]\times [0,1]\times I$,
\begin{equation*}
    \begin{array}{ll}
     \vert \Theta(\lambda^1)_i(t,s)- \Theta(\lambda^2)_i(t,s) \vert & =  \vert \psi^{\lambda^1}_i(t,s)- \psi^{\lambda^2}_i(t,s)\vert  \vspace{0.2cm}   \\
         &  =  \left\vert \sum_{j\in I,j\neq i}\int^T_t  
      H^{\lambda^1}(i,j,t,\tau,s,\psi^{\lambda^1})-  H^{\lambda^2}(i,j,t,\tau,s,\psi^{\lambda^2})d\tau\right\vert \vspace{0.2cm}
       \\
         & \leq \vert I \vert\sum_{j\in I}K\int^T_t \bigg(
         \vert \psi_j^{\lambda^1}(\tau,S_i^{t,s}(\tau))-\psi_j^{\lambda^2}(\tau,S_i^{t,s}(\tau))\vert +\vert \int_\tau^T\lambda^1_j(r)dr - \int_\tau^T\lambda^2_j(r)dr\vert  \bigg)d\tau.
    \end{array}
\end{equation*}
Taking the supremum over $I\times [0,1]$ yields:
\begin{equation*}
\begin{array}{l}
  \Vert \psi^{\lambda^1}(t,\cdot)- \psi^{\lambda^2}(t,\cdot)\Vert_\infty
    \\\leq \vert I \vert^2  \int^T_t 
         K\Vert \psi^{\lambda^1}(\tau,\cdot)-\psi^{\lambda^2}(\tau, \cdot)\Vert_\infty d\tau + \vert I \vert^2K\underset{i\in I}{\sup}\int_t^T\bigg\vert \int_\tau^T\lambda^1_i(r)dr - \int_\tau^T\lambda^2_i(r)dr\bigg\vert d\tau \vspace{0.2cm}\\
         \leq \vert I \vert^2  \int^T_t 
         K\Vert \psi^{\lambda^1}(\tau,\cdot)-\psi^{\lambda^2}(\tau, \cdot)\Vert_\infty d\tau + \vert I \vert^2K\mathcal{D}({\lambda^1},{\lambda^2}).
\end{array}
\end{equation*}
Then, by applying Gronwall Lemma to $t\mapsto \Vert \psi^{\lambda^1}(t,\cdot)-\psi^{\lambda^2}(t, \cdot)\Vert_\infty$, one has, for any $t\in [0,T]$,
\begin{equation}
\label{diff_mu_dist_d}
    \Vert \psi^{\lambda^1}(t,\cdot)- \psi^{\lambda^2}(t,\cdot)\Vert_\infty  \leq C\mathcal{D}({\lambda^1},{\lambda^2}),
\end{equation}
where the constant $C>0$ depends on $\delta,K,T$ and $\vert I \vert $. Therefore,
\begin{equation*}
  \Vert \Theta({\lambda^1})- \Theta({\lambda^2})\Vert_\infty =\Vert \psi^{\lambda^1}- \psi^{\lambda^2}\Vert_\infty \leq C\mathcal{D}(\lambda^1, \lambda^2).  
\end{equation*}
From the previous inequality and Lemma \ref{density_c_infty} in Section \ref{prop_metric_d}, the map $\Theta$ can be continuously extended to a Lipschitz continuous map from $\mathcal{M}_{\delta}^+([0,T],\mathbb{R}_+^{\vert I \vert })$ to $C^0([0,T]\times [0,1],\mathbb{R}^{\vert I \vert})$. 

Finally, by Lemma \ref{density_c_infty} we approximate $\lambda\in \mathcal{M}_{\delta}^+([0,T],\mathbb{R}_+^{\vert I \vert })$ by a sequence in $\{\lambda^n\}_n$ in $C^0_\delta([0,T],\mathbb{R}_+^{\vert I \vert})$ w.r.t. the distance $\mathcal{D}$. One has that $\{\Theta(\lambda^n)\}_n$ is uniformly Lipschitz continuous on $[0,T]\times [0,1]\times I$ according to Remark \ref{bound_derivative}. Thus, $\Theta(\lambda)$ is in $\Lip([0,T]\times [0,1],\mathbb{R}^{\vert I \vert})$.
\end{proof}

\begin{proof}[Proof of Proposition \ref{existence_measure_fixed_point}]
Let $\lambda\in \mathcal{M}^+([0,T],\mathbb{R}_+^{\vert I \vert})$ and $\delta>0$ be such that $\sum_{i\in I}\lambda_i([0,T])<\delta$. By Lemma \ref{cont_uniform}, the map $\Theta$ is Lipschitz continuous from $\mathcal{M}^+([0,T],\mathbb{R}_+^{\vert I \vert})$, endowed with the distance $\mathcal{D}$, to $\Lip([0,T]\times [0,1],\mathbb{R}^{\vert I \vert})$ endowed with the norm $\Vert \cdot \Vert_\infty$. We set: $\psi^\lambda:=\Theta(\lambda)$. We want to show that $\psi^\lambda$ is the unique solution of \eqref{int_form}. By Lemma \ref{density_c_infty} in Section \ref{prop_metric_d}, we consider a sequence $\{\lambda^n\}_n$ in $C_\delta^0([0,T],\mathbb{R}_+^{\vert I \vert})$ converging to $\lambda$ w.r.t. the distance $\mathcal{D}$. For any $n\in \mathbb{N}$, we consider $\psi^{\lambda^n}:=\Theta(\lambda^n)$, the solution of \eqref{def_integral_equation_2} associated with $\lambda^n$.
By the continuity of $\Theta$ stated in Lemma \ref{cont_uniform}, the sequence $\{\psi^{\lambda^n}\}_n$ converges to $\psi^\lambda$ w.r.t. the norm $\Vert \cdot \Vert_\infty$ and by the continuity of $H$ and the dominated convergence theorem, $\psi^\lambda$ is a solution of \eqref{int_form} associated with $\lambda$. 
Now we want to show that $\psi^\lambda$ is the unique  solution of \eqref{int_form} associated with $\lambda$. Assume the function $\bar{\psi}$ is a solution of \eqref{int_form} associated with $\lambda$.
Then, by applying similar computations as in the proof of Lemma \ref{cont_uniform}, one can show that:
\begin{equation*}
    \Vert \psi^\lambda - \bar{\psi} \Vert_\infty \leq C\mathcal{D}(\lambda,\lambda)=0.
\end{equation*}
Thus, $\psi^\lambda$ is the unique  solution of \eqref{int_form} associated with $\lambda$.
The continuity of the map $\lambda\mapsto \psi^\lambda$ from $\mathcal{M}^+([0,T],\mathbb{R}_+^{\vert I \vert})$, endowed with the weak$^\ast$ topology, to $C^0([0,T]\times [0,1]\times  I)$, endowed with the norm $\Vert \cdot \Vert_\infty$, is a consequence of the continuity of $\Theta$ stated in Lemma \ref{cont_uniform}.
\end{proof}

\begin{remark} \label{control_psi_norm_lambda}
By Remark \ref{bound_derivative} and Lemma \ref{cont_uniform}, one can show that for any $\lambda \in \mathcal{M}^+([0,T],\mathbb{R}_+^{\vert I \vert})$,  the solution $\psi^\lambda$ of \eqref{int_form} is such that,
$\max(\Vert \psi^\lambda\Vert_\infty, \Vert \partial_s\psi^\lambda\Vert_\infty)$ depends only on $H, b,c,g$ and $\sum_{i \in I}\lambda_i([0,T])$.
\end{remark}

\subsection{Analysis of weak solution of the Hamilton-Jacobi equation (2.5)}\label{existence_measure_weak_solution}
For any $\lambda\in \mathcal{M}^+([0,T],\mathbb{R}_+^{\vert I \vert})$, we consider the function $L^\lambda$ defined for any $i\in I$ and any $ t \in [0,T]$ by
\begin{equation}
    \label{def_L}
    L^\lambda_i(t):=\lambda_i([t,T]).
\end{equation}
Since, for $i\in I$, $\lambda_i$ is a finite measure on $[0,T]$, the function $L^\lambda$ is in $L^1([0,T],\mathbb{R}^{\vert I \vert})$. Observing that, for any test function compactly supported $f\in C^1_c((0,T),\mathbb{R}^{\vert I \vert})$, one has for any $i\in I$:
\begin{equation*}
    \int_0^T\partial_tf_i(t)L^\lambda_i(t)dt=
    \int_0^T\partial_tf_i(t)\Big(\int_t^T  \lambda(dr)\Big)dt=\int_0^T\Big(\int_0^r\partial_tf_i(t)dt\Big)\lambda_i(dr)=\int_0^Tf_i(t){\color{black}\lambda_i(dt)},
\end{equation*}
thus, $L^\lambda$ is in $BV([0,T],\mathbb{R}^{\vert I \vert})$ \cite[Definition 3.1]{ambrosio2000functions}.
We introduce the notion of weak solution for equation \eqref{hjb_0}.
\begin{definition}
\label{def_weak_sol_hjb}
For a given $\lambda\in \mathcal{M}^+([0,T],\mathbb{R}_+^{\vert I \vert})$, a function $\varphi\in BV((0,T)\times (0,1),\mathbb{R}^{\vert I \vert})$ is a weak solution of equation \eqref{hjb_0} if $\varphi - L^\lambda$ is in $ \Lip([0,T]\times [0,1],\mathbb{R}^{\vert I \vert})$ and if, for any test function $f\in C^1([0,T]\times [0,1],\mathbb{R}^{\vert I \vert})$,
\begin{equation}
\label{def_sub_so_2}
\begin{array}{l}
\int_0^1\varphi_i(0,s)f_i(0,s)ds -\int_0^1g_i(s)f_i(T,s)ds
+\int_0^T\int_0^1\left(\partial_tf_i(t,s)+\partial_s(f_i(t,s)b_i(s))\right)\varphi_i(t,s)dsdt \vspace{0.1cm} \\
+\int_0^T\int_0^1
\bigg(\sum_{j\in I,j\neq i}H(\varphi_i(t,s)-\varphi_j(t,s))-c_i(t,s)\bigg)f_i(t,s)dtds-\sum_{i\in I}\int_0^T\int_0^1f_i(t,s)ds\lambda_i(dt)\vspace{0.1cm}\\
= 0,
\end{array}
\end{equation}
where $\varphi_i(0,\cdot)$ is understood in the sense of trace \cite[Theorem 3.87]{ambrosio2000functions}.
\end{definition}
\begin{remark}\label{multiple_remarks}
\begin{enumerate}
    \item There is no boundary condition in \eqref{def_sub_so_2}. This is due to the fact that $b(0)=b(1)=0$, involving a null incoming flow in the domain $[0,1]$.
    \item Since $\varphi$ is in $BV([0,T],\mathbb{R}^{\vert I \vert})+\Lip([0,T]\times [0,1],\mathbb{R}^{\vert I \vert})$, $\varphi(0,\cdot)$ exists in the sense of trace. In addition, $\varphi$ is bounded and thus, it belongs to $L^2([0,T]\times [0,1],\mathbb{R}^{\vert I \vert})$ so that the integrals in \eqref{def_sub_so_2} exist.
    \item \label{finale_trace}
    The term $\lambda_i(\{T\})$ in the final condition of \eqref{hjb_0} is present in \eqref{def_sub_so_2} through the term $\textstyle \int_0^T\int_0^1f_i(t,s)ds\lambda_i(dt)$.
\end{enumerate}
\end{remark}
The main result of this subsection is the following.
\begin{theorem}\label{eq_weak_rep_form}
Let $\lambda\in \mathcal{M}^+([0,T],\mathbb{R}_+^{\vert I \vert})$. A function $\varphi\in  \Lip([0,T]\times [0,1],\mathbb{R}^{\vert I \vert}) + BV([0,T],\mathbb{R}^{\vert I \vert})$ is the unique weak solution of \eqref{hjb_0} in the sense of Definition \ref{def_weak_sol_hjb}, if and only if it satisfies that, for any $(t,s,i)\in [0,T]\times [0,1]\times I$,
\begin{equation}\label{int_form_varphi}
    \varphi_i(t,s)=\int_t^T\sum_{j\in I,j\neq i}-H\Big((\varphi_i-\varphi_j)(\tau, S_i^{t,s}(\tau))\Big)+c_i(\tau,S_i^{t,s}(\tau))  d\tau +L^\lambda_i(t)+ g_i(S_i^{t,s}(T)).
\end{equation}
\end{theorem}
Theorem \ref{eq_weak_rep_form} is a consequence of Lemmas \ref{Int_sol_implies_weak_sol} and \ref{int_sol_imp_weak_sol}. Lemma  \ref{Int_sol_implies_weak_sol} shows that a function satisfying \eqref{int_form_varphi} is a weak solution of \eqref{hjb_0}. Lemma  \ref{int_sol_imp_weak_sol} shows the converse.
In what follows, for any $\lambda\in \mathcal{M}^+([0,T],\mathbb{R}_+^{\vert I \vert})$, we define $\psi^\lambda$ as in Proposition \ref{existence_measure_fixed_point} and define $\varphi^\lambda\in \Lip([0,T]\times [0,1],\mathbb{R}^{\vert I \vert})+ BV([0,T],\mathbb{R}^{\vert I \vert})$ by: 
\begin{equation}\label{def_varphi}
    \varphi^\lambda:=\psi^\lambda + L^\lambda.
\end{equation}
\begin{remark}\label{link_psi_varphi}
For any $\lambda\in \mathcal{M}^+([0,T],\mathbb{R}_+^{\vert I \vert})$, $\varphi^\lambda$ is a solution of \eqref{int_form_varphi} on $[0,T]\times [0,1]\times I$ if and only if $\psi^\lambda$ is a solution of \eqref{int_form} on $[0,T]\times [0,1]\times I$. According to Remark \ref{control_psi_norm_lambda}, the quantity $\max(\Vert \varphi^\lambda\Vert_\infty,\Vert \partial_s\varphi^\lambda\Vert_\infty)$ depends on $\sum_{i\in I}\lambda_i([0,T])$.
\end{remark}

\begin{remark}\label{rem_classical_sol_hjb}
    Following Remark \eqref{link_psi_varphi}, for any $\lambda\in C^0([0,T],\mathbb{R}_+^{\vert I \vert})$, $\varphi^\lambda$ is a classical solution of \eqref{hjb_0} (in the sense that $\varphi^\lambda$ is in $C^1([0,T]\times [0,1],\mathbb{R}^{\vert I \vert})$ and satisfies \eqref{hjb_0} for any $(t,s,i)\in [0,T]\times [0,1]\times I)$) if and only if $\psi^\lambda$ is a classical solution of \eqref{hjb}.
\end{remark}

\begin{remark}\label{note_on_final_condition}
     For a $\lambda$ in $C^0([0,T],\mathbb{R}_+^{\vert I \vert})$, the final condition in \eqref{hjb_0} is understood as $\varphi^\lambda_i(T,s)=g_i(s)$. For a $\lambda$ in $\mathcal{M}^+([0,T],\mathbb{R}_+^{\vert I \vert})$, by the definition of $L^\lambda$ in \eqref{def_L}, one has $\varphi^\lambda_i(T,s)=g_i(s)+\lambda_i(\{T\})$, similarly to the final condition of the Hamilton Jacobi equation \eqref{hjb_0}.
\end{remark}

\begin{lemma}\label{approx_lambda_smooth}
For any $\lambda\in \mathcal{M}^+([0,T],\mathbb{R}_+^{\vert I \vert})$, there exists a sequence $\{(\lambda^n,\varphi^n)\}_n$ such that
\begin{enumerate}[label=(\roman*)]
    \item $\{\lambda^n\}_n$ is in $C^0([0,T],\mathbb{R}_+^{\vert I \vert})$ and converges to $\lambda$ w.r.t. the weak$^\ast$ topology in $\mathcal{M}^+([0,T],\mathbb{R}_+^{\vert I \vert})$,
    \item for any $n\in \mathbb{N}$, $\varphi^n$ is a classical solution of \eqref{hjb_0} associated with $\lambda^n$ on $(0,T)\times (0,1)\times I$,
    \item we have $\lim_{n\to \infty}\Vert \varphi^\lambda(t,\cdot)-\varphi^n(t,\cdot)\Vert_\infty=0$ for a.e. $ t\in [0,T]$ and $\lim_{n\to \infty}\Vert \varphi^\lambda(0,\cdot)-\varphi^n(0,\cdot)\Vert_\infty=0$.
\end{enumerate}
\end{lemma}
\begin{proof}
By Lemma \ref{density_c_infty}, we consider a sequence $\{\lambda^n\}_n$ in $C^0([0,T],\mathbb{R}_+^{\vert I \vert})$ that weakly$^\ast$ converges to $\lambda$. We set $\psi^{n}:=\psi^{\lambda^n}$, where $\psi^{\lambda^n}$ is defined by Proposition \ref{existence_measure_fixed_point}, and $\varphi^n:=\psi^n+L^{\lambda^n}$. 
By Lemma \ref{eq_int_pde}, for any $n\in \mathbbm{N}$, $\psi^n$ is a classical solution of \eqref{hjb}. Then, for any $n\in \mathbbm{N}$, $\varphi^n$ is a classical solution of \eqref{hjb_0} associated with $\lambda^n$ on $(0,T)\times (0,1)\times I$.
The weak$^\ast$ convergence of $\{\lambda^n\}_n$ implies (see Lemma \ref{rem_weak_conv_dist} in Section \ref{prop_metric_d} and its proof):
\begin{equation*}
\begin{array}{lll}
   \lim_{n\to \infty}\,L^{\lambda^n}(t)=L^\lambda(t)\mbox{ for a.e. } t\in [0,T] & \mbox{ and }
     &  \lim_{n\to \infty}\,L^{\lambda^n}(0)=L^\lambda(0).
\end{array}
\end{equation*}
By Proposition \ref{existence_measure_fixed_point}, $\{\psi^n\}_n$ converges to $\psi^\lambda$ w.r.t. the norm $\Vert \cdot \Vert_\infty$. Then, the two previous equalities imply that:
\begin{equation*}
\begin{array}{lll}
   \lim_{n\to \infty}\Vert \varphi^\lambda(t,\cdot)-\varphi^n(t,\cdot)\Vert_\infty=0\mbox{ for a.e. } t\in [0,T] & \mbox{ and }
     & \lim_{n\to \infty}\Vert \varphi^\lambda(0,\cdot)-\varphi^n(0,\cdot)\Vert_\infty=0.
\end{array}
\end{equation*}
\end{proof}

\begin{lemma}\label{Int_sol_implies_weak_sol}
For any  $\lambda\in \mathcal{M}^+([0,T],\mathbb{R}_+^{\vert I \vert})$, $\varphi^\lambda$ is a weak solution of \eqref{hjb_0} in the sense of Definition \ref{def_weak_sol_hjb}. 
\end{lemma}
\begin{proof}
We consider a sequence $\{\lambda^n,\varphi^n\}_n$ defined as in Lemma \ref{approx_lambda_smooth}. For any $n \in \mathbb{N}$, $\varphi^n$ is a classical solution of \eqref{hjb_0}. Thus, for any test function $f\in C^1([0,T]\times [0,T],\mathbb{R}^{\vert I \vert})$:
\begin{equation}
\label{weak_sol_smooth_case}
\begin{array}{l}
\int_0^1\varphi^n_i(0,s)f_i(0,s)ds -\int_0^1g_i(s)f_i(T,s)ds
+\int_0^T\int_0^1\Big(\partial_tf_i(t,s)+\partial_s(f_i(t,s)b_i(s))\Big)\varphi^n_i(t,s)dsdt \vspace{0.1cm} \\
+\int_0^T\int_0^1
\Big(\sum_{j\in I,j\neq i}H(\varphi^n_i(t,s)-\varphi^n_j(t,s))-c_i(t,s)\Big)f_i(t,s)dtds-\sum_{i\in I}\int_0^T\int_0^1f_i(t,s)ds\lambda^n_i(t)dt\vspace{0.1cm}\\
= 0.
\end{array}
\end{equation}
The conclusion follows by using Lemma \ref{approx_lambda_smooth}, the continuity of $H$, the limit of \eqref{weak_sol_smooth_case} when $n$ tends to infinity and by applying the dominated convergence theorem.
\end{proof}
The next lemma states the converse of Lemma \ref{Int_sol_implies_weak_sol}.
\begin{lemma}
\label{int_sol_imp_weak_sol}
For any $(\lambda,\varphi)\in \mathcal{M}^+([0,T],\mathbb{R}_+^{\vert I \vert})\times (\Lip([0,T]\times [0,1],\mathbb{R}^{\vert I \vert})+ BV([0,T],\mathbb{R}^{\vert I \vert}))$, if $\varphi$ is a weak solution of \eqref{hjb_0} associated with $\lambda$ in the sense of Definition \ref{def_weak_sol_hjb}, then $\varphi$ satisfies \eqref{int_form_varphi} for any $(t,s,i)\in [0,T]\times [0,1]\times I$.
\end{lemma}
The proof Lemma \ref{int_sol_imp_weak_sol} is postponed to Section \ref{proof_lemma_eq_weak_sol_int_sol}
\\

With the above lemmas, the proof of Theorem \ref{eq_weak_rep_form} is straightforward.
\begin{proof}[Proof of Theorem \ref{eq_weak_rep_form}]The proof is a direct consequence of Lemmas \ref{Int_sol_implies_weak_sol} and \ref{int_sol_imp_weak_sol}. The uniqueness of a weak solution is deduced by Remark \ref{link_psi_varphi}. Indeed, since $\psi^\lambda$ is the unique solution of \eqref{int_form}, $\varphi^\lambda$ is the unique solution of \eqref{int_form_varphi} and thus, the unique weak solution of \eqref{hjb_0} in the sense of Definition \ref{def_weak_sol_hjb}. 
\end{proof}

\section{The dual problem}\label{dual_prob_section}

This section aims at showing that there exists a convex optimization problem whose dual problem is Problem \eqref{problemE}. 
 We consider the following spaces:
\begin{equation*}
X_0 = C^1([0,T]\times [0,1],\mathbb{R}^{\vert I \vert})\times C^0([0,T],\mathbb{R}^{\vert I \vert}) \mbox{ and }X_1:= C^0([0,T]\times[0,1],\mathbb{R}^{\vert I \vert}) \times
C^0([0,T]\times [0,1],\mathbb{R}^{\vert I \vert\times \vert I \vert }).
\end{equation*}
Given $(\varphi, \lambda)\in X_0$, we consider the following inequality:
\begin{equation}
\label{hjb_ineq}
\begin{array}{ll}
- \partial_ t{\varphi}_i(t,s)-b_i(s)\partial_s{\varphi}_i(t,s)-c_i(t,s)-\lambda_i(t)+\sum_{j\in I,j\neq i}H(({\varphi}_i-{\varphi}_j)(t,s))\leq0 & \mbox{on }(0,T)\times (0,1)\times I,\\
{\varphi}_i(T,\cdot)\leq  g_i & \mbox{on } (0,1)\times I.
\end{array}
\end{equation}
The set $\mathcal{K}_0$  is defined by: $\mathcal{K}_0:=\{(\varphi,\lambda)\in X_0,\, \lambda\geq 0  \mbox{ and } \varphi\mbox{ satisfies }\eqref{hjb_ineq}\mbox{ associated with }\lambda \}$. We introduce the function ${A}$, defined on $\mathcal{K}_0$ by :
\begin{equation}
\label{def_dual_fun}
{A}(\varphi,\lambda):=
\sum_{i \in I}\int_0^1  - \varphi_i(0,s)m_i^0(ds) + \int_0^T\lambda_i(t)D_i(t)dt,
\end{equation}
and the following problem is considered:
\begin{equation}
\label{dual_prob}
\inf_{(\varphi,\lambda)\in \mathcal{K}_0}\,A(\varphi,\lambda).
\end{equation}
\begin{remark}\label{remark_fina_cond_mult}
    Since we consider multipliers $\lambda$ in $C^0([0,T],\mathbb{R}^{\vert I \vert})$ in Problem \eqref{dual_prob}, one has for any $i\in I$ that $\textstyle\int_T^T\lambda_i(t)dt{\color{black}=0}$. Thus, the final condition in \eqref{hjb_ineq} is $\varphi_i(T,\cdot)\leq g_i$, unlike the final condition $\varphi_i(T,\cdot) = g_i + \lambda_i(\{T\})$ in \eqref{hjb_0} where we consider multipliers $\lambda$ in $\mathcal{M}^+([0,T],\mathbb{R}_+^{\vert I \vert})$.
\end{remark}
The main result of this section is the following.
\begin{theorem}
\label{pb_in_duality}
We have:
\begin{equation*}
\inf_{(\varphi,\lambda)\in \mathcal{K}_0}\,A(\varphi,\lambda)=
- \underset{(m,E)\in \mathcal{S}(m^0,D)}{\min}\,\tilde{J}(m,E).
\end{equation*}
\end{theorem}
This theorem is proved further in this section. It is a direct application of the Fenchel-Rockafellar duality theorem \cite[Chapter 3]{ekeland1999convex}, which is stated
as follows.
\begin{theorem}[Fenchel-Rockafellar]Let $X_0$ and $X_1$ be Banach spaces and denote by $X_0^\ast$ and $X_1^\ast$  their duals. Suppose $\Lambda:X_0\mapsto X_1$ is a continuous linear operator with adjoint $\Lambda^\ast:X_1^\ast \mapsto X_0^\ast$. Finally, let $\mathcal{F}$ and  $\mathcal{G}$ be convex functionals on $X_0$ and $X_1$, with respective Fenchel conjugates denoted by $\pmb{\F}$ and $\pmb{\G}$. 
If 
\begin{equation*}
      \underset{x\in X_0}{\inf}\mathcal{F}(x)+\mathcal{G}(\Lambda x)
\end{equation*}
is finite and there exists $x_0\in X_0$ such that $\mathcal{F}(x_0)<+\infty$ and $\mathcal{G}$ is continuous at $\Lambda x_0$, then
 \begin{equation*}
     \underset{x\in X_0}{\inf}\mathcal{F}(x)+\mathcal{G}(\Lambda x)=\underset{y\in X_1^\ast}{\sup}-\pmb{\F}(\Lambda^\ast y)-\pmb{\G}( -y).
 \end{equation*}
\end{theorem}
To prove Theorem \ref{pb_in_duality} using Fenchel-Rockafellar theorem, we rewrite problems \eqref{problemE} and \eqref{dual_prob} in a way similar to the problems in the statement of the Fenchel-Rockafellar theorem.

We consider the linear and bounded operator $\Lambda:X_0\to X_1$ defined by: $\Lambda(\varphi,\lambda):=(\partial_t\varphi+b\partial_s\varphi +\tilde{\lambda}, \Delta \varphi)$,
where $\partial_t\varphi + b\partial_s\varphi:=(\partial_t\varphi_i+ b_i\partial_s\varphi_i)_{i\in I}$, $\Delta \varphi :=(\Delta \varphi_{i,j})_{(i,j)\in \tilde{I}}$ with $\Delta \varphi_{i,j}=\varphi_j- \varphi_i$ and, for any $(s,i)\in [0,1]\times I,\,\tilde{\lambda}_i(\cdot,s):=\lambda_i(\cdot)$. The linear operator $\Lambda^\ast:X_1^\ast\to X_0^\ast $ is the adjoint operator of $\Lambda$.
The  functional $\F$ is defined, for any $(\varphi,\lambda)\in X_0$, by
\begin{equation*}
\F(\varphi,\lambda):= 
\left\{
\begin{array}{ll}
\sum_{i \in I}\int_0^1-\varphi_i(0,s)m_i^0(ds)+ \int_0^T D_i(t)\lambda_i(t) dt& \mbox{if }\varphi_i(T,\cdot)\leq g_i\mbox{ and }\lambda_i\geq 0\quad\forall i\in I, \\
 +\infty& \mbox{otherwise.}
\end{array}
\right.
\end{equation*}
Using that:
\begin{equation*}
\begin{array}{l}
 \langle (m,E),\Lambda(\varphi,\lambda)\rangle_{X_1^\ast, X_1} \vspace{0.2cm}
  \\
   = \sum_{i\in I} \int_0^1 \int_0^T
  (\partial_t\varphi_i(t,s) + b_i(s)\partial_s\varphi_i(t,s))m_i(ds,t) + 
  \sum_{j\in I, j\neq i}(\varphi_j(t,s)-\varphi_i(t,s))E_{i,j}(t,ds)dt\\
  +\sum_{i\in I}\int_0^T\int_0^1m_i(t,ds)\tilde{\lambda}_i(t,s)dt,
\end{array}
\end{equation*}
defining $\pmb{\F}$ as the Fenchel conjugate of $\F$, we have:
\begin{equation*}
\pmb{\F}\left(\Lambda^\ast(m,E)\right):= 
\left\{
\begin{array}{ll}
\int_0^1 \sum_{i\in I}g_i(s)m_i(T,ds) & \mbox{if }(m,E) \mbox{ is a weak solution of \eqref{evoX}}\\
&\mbox{and }\int_0^1m_i(t,ds)\leq D_i(t)\quad\forall(t,i)\in[0,T]\times I,\vspace{0.2cm}\\
+\infty & \mbox{otherwise}.
\end{array}
\right.
\end{equation*}
For any  $(x,y)\in X_1$, the functional $\G$ is defined by:
\begin{equation*}
\G(x,y):=\left\{
\begin{array}{ll}
 0 &  \mbox{if } -c_i(t,s) - x_i(t,s) +\sum_{j\in I,j\neq i}H(-y_{i,j}(t,s))\leq 0 \quad\forall (t,s,i)\in(0,T)\times(0,1)\times I,
 \\
+\infty &  \mbox{otherwise}.
\end{array}
\right.
\end{equation*}
Then, for any $(\varphi,\lambda)\in X_0$, it holds:
\begin{equation*}
\G(\Lambda(\varphi,\lambda)):=\left\{
\begin{array}{ll}
 0 &  \mbox{if } -c_i(t,s)-\partial_t\varphi_i(t,s) - b_i(t,s)\partial_s\varphi_i(t,s) -\tilde{\lambda_i}(t,s) + \sum_{j\in I,j\neq i}H(-\Delta\varphi_{i,j}(t,s))\leq 0
 \\
 & \forall (t,s,i)\in(0,T)\times(0,1)\times I,\vspace{0.2cm}\\
+\infty &  \mbox{otherwise}.
\end{array}
\right.
\end{equation*}
Using that $L$ is the Fenchel conjugate of $H$, i.e. $L(x)=\pmb{H}(x)$, one can show, as in \cite{benamou2000computational} for the quadratic case, that, for any $(v,w)\in \mathbb{R}^2$,
\begin{equation*}
\sup_{a,b \in \mathbb{R}}\,\{av+bw;\,a+H(b)\leq 0\}=\left\{
\begin{array}{ll}L(\frac{w}{v})v
 &  \mbox{if }v >0,\\
 0& \mbox{if }v =0\mbox{ and }w=0,\\
 +\infty & \mbox{otherwise}.
\end{array}
\right.
\end{equation*}
Then, with similar computations as in \cite[Lemma 4.3]{cardaliaguet2015mean}, for any $(m,E)\in X_1^\ast$, the Fenchel conjugate $\pmb{\G}$ of $\G$ satisfies:
\begin{equation}
\label{Gast0}
\begin{array}{l}
 {\pmb{\G}}(-(m,E))\\
 =
 \sup_{(x,y)\in X_1}\,\sum_{i\in I}\int_0^T\int_0^1-x_i(t,s)m_i(t,ds)dt-\sum_{j\neq i}y_{i,j}(t,s)E_{i,j}(t,ds)dt-\mathcal{G}(x,y)\vspace{0.1cm}\\
 =\sup_{(x,y)\in X_1}\,\sum_{i\in I}\int_0^T\int_0^1(-x_i(t,s)-c_i(t,s)+c_i(t,s))m_i(t,ds)-\sum_{j\neq i}y_{i,j}(t,s)E_{i,j}(t,ds)dt-\mathcal{G}(x,y)\vspace{0.2cm}\\
  =\sum_{i\in I}\int_0^T\int_0^1c_i(t,s)m_i(t,ds)dt+\sup_{(x,y)\in X_1}\,\sum_{i\in I}\int_0^T\int_0^1x_i(t,s)m_i(t,ds)+\sum_{j\neq i}y_{i,j}(t,s)E_{i,j}(t,ds)dt-\mathcal{G}(-x-c,-y)\vspace{0.2cm}\\
 =
 \left\{
 \begin{array}{ll}
\int_0^T\int_0^1\sum_{i\in I} c_i(t,s)  m_i(t,ds) + \sum_{j\neq i} L\bigg(\frac{\mbox{d}E_{i,j}}{ \mbox{d} m_i\otimes dt}(t,s)\bigg)m_i(t, ds)dt   &  \mbox{if }m>0,E\geq 0 \mbox{ and }E\ll m,\vspace{0.1cm}\\
 0 & \mbox{if }m=0 \mbox{ and }E=0,\vspace{0.1cm}\\
 +\infty &\mbox{otherwise}.
 \end{array}
 \right.
\end{array}
\end{equation}
By the definition of $\mathcal{F}$ and $\mathcal{G}$ above, one has
\begin{equation}\label{dual_prob_eq_f_g}
\inf_{(\varphi,\lambda)\in \mathcal{K}_0}\,A(\varphi,\lambda)=
\inf_{(\varphi,\lambda)\in X_0}\F(\varphi,\lambda)+\G(\Lambda(\varphi,\lambda)),
\end{equation}
and
\begin{equation}\label{primal_prob_eq_f_g}
\min_{(m,E)\in \mathcal{S}(m^0,D)}\,\tilde{J}(m,E)=
\min_{(m,E)\in X_1^\ast}\,
\pmb{\F}(\Lambda^\ast(m,E))+\pmb{\G}(-(m,E)).
\end{equation}
Before to apply the Fenchel-Rockafellar theorem to our particular case, we need to prove that Problem \eqref{dual_prob} is finite (Lemma \ref{dual_finite})  and that there exists $(\varphi,\lambda)$ in $X_0$ such that $\mathcal{F}(\varphi,\lambda)<+\infty$ and $\mathcal{G}$ is continuous at $\Lambda(\varphi,\lambda)$ (Lemma \ref{contraint_qualifiation}).

\begin{lemma}
\label{dual_finite}
$\inf_{(\varphi,\lambda)\in \mathcal{K}_0}\,A(\varphi,\lambda)$ is finite.
\end{lemma}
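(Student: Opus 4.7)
The plan is to establish that $\inf A$ is a real number by proving separately the two bounds $\inf A<+\infty$ and $\inf A>-\infty$. The first is obtained by exhibiting an elementary admissible pair, while the second is a weak-duality estimate against the uncontrolled primal trajectory $(\rho,0)\in CE(m^0,D)$ introduced before \eqref{def_B}.

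For the upper bound, I would take the constant pair $\varphi^0_i(t,s):=-M$ with $M:=\max_{i\in I}\|g_i\|_\infty$ and $\lambda^0_i(t):=\max_{i\in I}\|c_i\|_\infty$. Because all derivatives of $\varphi^0$ vanish and $H(0)=0$, the inequality \eqref{hjb_2} reduces to $-c_i-\lambda^0\leq 0$, which holds by construction, and $\varphi^0_i(T,\cdot)=-M\leq g_i$ by the choice of $M$. Hence $(\varphi^0,\lambda^0)\in\mathcal{K}_0$, and $A(\varphi^0,\lambda^0)=M+\lambda^0\sum_{i}\int_0^T D_i(t)\,dt$ is finite.

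For the lower bound, I would fix any $(\varphi,\lambda)\in\mathcal{K}_0$ and use $\varphi$ as a test function in the weak formulation of \eqref{fk} with $\alpha=0$ (Definition \ref{weak_sol_cont_equ}). Combining the resulting identity with the pointwise bound $\partial_t\varphi_i+b_i\partial_s\varphi_i\geq -c_i-\lambda_i$, which follows from \eqref{hjb_2} because $H\geq 0$, together with $\varphi_i(T,\cdot)\leq g_i$ and $\rho_i(T,\cdot)\geq 0$, yields
\begin{equation*}
A(\varphi,\lambda)\;\geq\;-\tilde{B}(\rho,0)+\int_0^T\sum_{i\in I}\lambda_i(t)\Bigl(D_i(t)-\int_0^1\rho_i(t,ds)\Bigr)dt.
\end{equation*}
Mass is conserved along $\rho$ since $b_i$ vanishes outside $(0,1)$, so Assumption \ref{hyp_on_D} gives $D_i(t)-\int_0^1\rho_i(t,ds)\geq\varepsilon^0>0$; provided $\lambda\geq 0$, the last integral is non-negative and I conclude $A(\varphi,\lambda)\geq -\tilde B(\rho,0)>-\infty$.

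The delicate step is the lower bound: without the non-negativity of $\lambda$, a scaling family such as $\varphi_i(t,s)=-N(T-t)$ together with $\lambda_i(t)\approx -N$ would sit in $\mathcal{K}_0$ and drive $A$ to $-\infty$ as $N\to\infty$. Hence the argument crucially relies on interpreting $\lambda$ as a non-negative Lagrange multiplier for the congestion constraint, which is the natural convention for $\mathcal{K}_0$ and is consistent with the cone $\lambda\in\mathcal{M}^+$ that appears in $\mathcal{R}_0$ for the relaxed problem \eqref{relaxed_problem_0}.
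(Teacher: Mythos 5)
Your proof is correct, and its lower bound takes a genuinely different route from the paper's, although both arguments ultimately rest on the same weak-duality estimate against the uncontrolled trajectory $(\rho,0)$ together with Assumption \ref{hyp_on_D}. The paper first replaces an arbitrary $(\varphi,\lambda)\in\mathcal{K}_0$ by the exact solution $\bar{\varphi}$ of \eqref{hjb} with the same $\lambda$ via the comparison principle (Lemma \ref{comp_principle}), noting $A(\bar{\varphi},\lambda)\leq A(\varphi,\lambda)$, and then bounds $\bar{\varphi}_i(0,s)$ pointwise using the representation along characteristics \eqref{def_integral_equation} (via Lemma \ref{int_sol_imp_weak_sol}) and $H\geq 0$; its constant $Q$ coincides with your $-\tilde{B}(\rho,0)$, since $\rho_i(t,\cdot)$ is the image of $m_i^0$ under the flow $S_i^{0,\cdot}(t)$. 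You instead pair the subsolution inequality \eqref{hjb_2} directly with $\rho$, which is more elementary: it requires none of the Section \ref{hjb_section} machinery (existence of $\bar{\varphi}$, comparison, integral representation) and is precisely the weak-duality inequality $A(\varphi,\lambda)\geq -\tilde{B}(\rho,0)$ that Theorem \ref{pb_in_duality} later packages abstractly. One step you should make explicit: Definition \ref{weak_sol_cont_equ} admits only $C_c^\infty$ test functions, while your $\varphi$ is merely $C^1([0,T]\times[0,1]\times I)$; since $\rho$ is a classical $C^1$ solution supported in $[0,1]$ and $b_i(0)=b_i(1)=0$, the needed identity follows from a direct integration by parts in $\frac{d}{dt}\sum_{i\in I}\int_0^1\varphi_i(t,s)\rho_i(t,s)ds$, so the gap is harmless but worth a sentence. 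Two further points in your favour: your explicit constant pair showing $\mathcal{K}_0\neq\emptyset$, hence $\inf A<+\infty$, is left implicit in the paper's proof of this lemma (essentially the same pair only appears afterwards, in Lemma \ref{contraint_qualifiation}); and your closing observation on the sign of $\lambda$ is accurate and pertinent: as literally written the definition of $\mathcal{K}_0$ omits $\lambda\geq 0$, yet the paper's own argument uses it (through $\mathcal{L}_0$ and the step ``using that $\lambda\geq 0$''), and your scaling family confirms that without this convention the infimum would indeed be $-\infty$, so reading $\lambda\geq 0$ into $\mathcal{K}_0$ --- consistently with the effective domain of $\F$ and with $\mathcal{R}_0$ --- is not cosmetic but necessary for the lemma to hold.
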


\begin{proof}
Let $(\varphi,\lambda)\in \mathcal{K}_0$. Since $\varphi$ is in $C^1([0,T]\times [0,1],\mathbb{R}^{\vert I \vert})$ and satisfies \eqref{hjb_ineq}, using that $H$ is non-negative, we have, for any $(i,s)\in I\times[0,1]$, $\varphi_i(0,s)\leq T \Vert c\Vert_\infty+ \int_0^T\lambda_i(\tau)d\tau + \Vert g\Vert_\infty$. Setting $Q:=-\vert I \vert( T \Vert c\Vert_\infty +\Vert g\Vert_\infty)$, one has:
\begin{equation*}
Q+\sum_{i\in I}
 \int_0^T\lambda_i(t)\bigg(D_i(t)-\int_0^1m^0_i(ds)\bigg)dt
 \leq A(\varphi,\lambda).
\end{equation*}
Since $\lambda\geq0$, we deduce from the Assumption \ref{hyp_on_D} and previous inequality that $Q\leq \inf_{(\varphi,\lambda)\in \mathcal{K}_0}\,A(\varphi,\lambda)$.
\end{proof}

\begin{lemma}
\label{contraint_qualifiation}
There exists $(\varphi,\lambda)\in X_0$ such that $\mathcal{F}(\varphi,\lambda)< \infty$ and $\mathcal{G}$ is continuous at $\Lambda(\varphi,\lambda)$.
\end{lemma}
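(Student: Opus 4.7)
The plan is to exhibit an explicit pair $(\varphi,\lambda)\in E_0$ that strictly satisfies both the terminal/sign constraints hidden in $\mathcal{F}$ and the pointwise Hamilton--Jacobi inequality hidden in $\mathcal{G}$, so that a whole $C^0\times C^1$-neighborhood of $\Lambda(\varphi,\lambda)$ lies inside the constraint set.

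My candidate is
\begin{equation*}
\varphi_i(t,s):=g_i(s)-C(T-t),\qquad \lambda_i(t):=0,
\end{equation*}
for some constant $C>0$ to be chosen. By Assumption \ref{hyp_on_c_g}, $g_i\in C^1([0,1])$, so $\varphi_i\in C^1([0,T]\times[0,1])$ and $(\varphi,\lambda)\in E_0$. The conditions defining finiteness of $\mathcal{F}$ are immediate: $\varphi_i(T,s)=g_i(s)$ and $\lambda\equiv 0\ge 0$, hence $\mathcal{F}(\varphi,\lambda)$ is the finite quantity $-\sum_{i}\int_0^1 g_i(s)m_i^0(ds)$.

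Next I compute $\Lambda(\varphi,\lambda)=(x,y)$ with $x_i(t,s)=\partial_t\varphi_i+b_i\partial_s\varphi_i+\lambda_i=C+b_i(s)g_i'(s)$ and $y_{i,j}(t,s)=g_j(s)-g_i(s)$. The constraint defining $\mathcal{G}(x,y)=0$ is
\begin{equation*}
-c_i(t,s)-C-b_i(s)g_i'(s)+\sum_{j\neq i}\tfrac{1}{2}\bigl((g_j(s)-g_i(s))^-\bigr)^{2}\le 0.
\end{equation*}
Under Assumptions \ref{hyp_on_b} and \ref{hyp_on_c_g}, the functions $c_i$, $b_i$, $g_i'$ and $g_j-g_i$ are bounded on the compact set $[0,T]\times[0,1]\times I$, so there is a finite
\begin{equation*}
M:=\max_{i\in I}\Bigl(\|c_i\|_\infty+\|b_i\|_\infty\|g_i'\|_\infty+\sum_{j\neq i}\tfrac{1}{2}\|(g_j-g_i)^-\|_\infty^{2}\Bigr),
\end{equation*}
and choosing any $C>M+1$ yields the \emph{strict} pointwise inequality, with a uniform margin.

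Finally, since the left-hand side depends continuously on $(x,y)$ in the $C^0\times C^1$ topology, this strict inequality persists on an open neighborhood $\mathcal{U}$ of $\Lambda(\varphi,\lambda)$ in $E_1$; on $\mathcal{U}$ the functional $\mathcal{G}$ is identically $0$ and therefore continuous at $\Lambda(\varphi,\lambda)$. There is no genuine obstacle here: the only point to be careful about is that continuity of $\mathcal{G}$ at a feasible point requires a \emph{strict} constraint qualification, which is exactly what the margin $C>M+1$ provides, and why the naive choice $C=M$ (or $\varphi\equiv 0$ when $g$ has negative values) would not suffice.
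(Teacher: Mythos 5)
Your proof is correct and follows essentially the same strategy as the paper: exhibit an explicit strictly feasible (Slater) point so that the Hamilton--Jacobi inequality holds with a uniform negative margin, which makes $\mathcal{G}$ vanish identically on a neighborhood of $\Lambda(\varphi,\lambda)$. The only difference is cosmetic --- the paper takes $\varphi_i\equiv-\max_i\Vert g_i\Vert_\infty-1$ (constant in $i$, so the coupling terms $H(\Delta\varphi_{i,j})$ vanish) together with $\lambda_i:=\Vert c_i\Vert_\infty+1$, while you take $\varphi_i(t,s)=g_i(s)-C(T-t)$ with $\lambda\equiv 0$ and let the large time slope $C$ absorb the bounded terms; both choices yield the same strict constraint qualification.
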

\begin{proof}
Let $\varphi$ and $\lambda$ be such that, for any $(t,s,i)\in [0,T]\times [0,1]\times I$,
\begin{equation*}
\varphi_i(t,s) = -\max_{i\in I}( \Vert g_i \Vert_\infty) - 1,
\end{equation*}
and
\begin{equation*}
\lambda_i(t) := \Vert c_i\Vert_\infty + 1,
\end{equation*}
Functions $\varphi$ and $\lambda$ being constant, it holds that $(\varphi,\lambda)\in X_0$ and  $\mathcal{F}(\varphi,\lambda)<\infty$. Also,
from the choice of $\varphi$ and $\lambda$, it  follows that, for any  $i\in I$, $s\in[0,1]$ and $t\in [0,T]$,
\begin{equation*}
 -c_i(t,s)-\partial_t\varphi_i(t,s) - b_i(t,s)\partial_s\varphi_i(t,s) -\lambda_i(t,s) + \sum_{j\in I,j\neq i}H(\varphi_{i}(t,s)-\varphi_{j}(t,s))< 0.
\end{equation*}
Thus, $\mathcal{G}$ is continuous at $\Lambda(\varphi,\lambda)$.  
\end{proof}

We are now ready to prove Theorem \ref{pb_in_duality}.
\begin{proof}
Using Lemmas \ref{contraint_qualifiation} and \ref{dual_finite} and observing that \eqref{dual_prob_eq_f_g} and \eqref{primal_prob_eq_f_g}, the conclusion follows by applying the Fenchel-Rockafellar duality theorem.
\end{proof}

Problem \eqref{dual_prob} may not have an optimum. The next proposition states that if the space of solutions $X_0$ is relaxed, then one can find a candidate $(\varphi,\lambda)$ in $ BV((0,T)\times (0,1),\mathbb{R}^{\vert I \vert})\times \mathcal{M}^+([0,T],\mathbb{R}_+^{\vert I \vert})$, such that $\tilde{A}(\varphi,\lambda)$, where $\tilde{A}$ is defined in \eqref{def_tilde_A_0}, is equal to the value of Problem \eqref{dual_prob}.
\begin{proposition}\label{eq_inf_relax}
There exists $\lambda\in \mathcal{M}^+([0,T],\mathbb{R}_+^{\vert I \vert})$ such that:
\begin{equation*}
  \tilde{A}(\varphi^\lambda,\lambda)=  \inf_{(\phi,\mu)\in 
\mathcal{K}_0}{A}(\phi,\mu),
\end{equation*}
where $\varphi^\lambda$ is defined in \eqref{def_varphi}.
\end{proposition}
Before proving this result, we need the following lemma.
\begin{lemma}
\label{borne_lambda}
For any $K>0$,  there exists a constant $ C>0$ such that, for any $(\varphi,\lambda)\in \mathcal{K}_0$ satisfying $A(\varphi,\lambda)\leq K$, we have:
\begin{equation*}
\sum_{i\in I}\int_0^T\lambda_i(t)dt \leq C.
\end{equation*}
\end{lemma}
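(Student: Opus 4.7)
The plan is to exploit the fact, proved in Lemma \ref{int_sol_imp_weak_sol}, that any weak solution $\varphi$ of the Hamilton--Jacobi equation associated to $\lambda$ satisfies the integral representation \eqref{def_integral_equation} a.e., together with Assumption \ref{hyp_on_D}, which gives a strict gap $\varepsilon^0$ between the initial mass per mode and the congestion threshold $D_i$.

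First I would evaluate the integral representation at $t=0$: for a.e.\ $(i,s)\in I\times [0,1]$,
\begin{equation*}
\varphi_i(0,s)=\int_0^T\sum_{j\neq i}-H\bigl((\varphi_j-\varphi_i)(\tau,S_i^{0,s}(\tau))\bigr)+c_i(\tau,S_i^{0,s}(\tau))\,d\tau+\int_0^T\lambda_i(d\tau)+g_i(S_i^{0,s}(T)).
\end{equation*}
Because $H\geq 0$ everywhere, dropping the Hamiltonian term yields the pointwise upper bound
\begin{equation*}
\varphi_i(0,s)\leq \int_0^T c_i(\tau,S_i^{0,s}(\tau))\,d\tau+\int_0^T\lambda_i(d\tau)+g_i(S_i^{0,s}(T)).
\end{equation*}

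Next I would integrate against $m_i^0$ and sum over $i$, as in the proof of Lemma \ref{dual_finite}. Setting
\begin{equation*}
Q:=-\sum_{i\in I}\int_0^1\Bigl(g_i(S_i^{0,s}(T))+\int_0^T c_i(\tau,S_i^{0,s}(\tau))\,d\tau\Bigr)m_i^0(ds),
\end{equation*}
which is finite thanks to Assumption \ref{hyp_on_c_g}, and using Fubini on $\int_0^1\int_0^T\lambda_i(d\tau)m_i^0(ds)=\bigl(\int_0^1 m_i^0(ds)\bigr)\int_0^T\lambda_i(d\tau)$, we obtain
\begin{equation*}
\tilde A(\varphi,\lambda)\geq Q+\sum_{i\in I}\int_0^T\Bigl(D_i(t)-\int_0^1 m_i^0(ds)\Bigr)\lambda_i(dt).
\end{equation*}

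Finally, Assumption \ref{hyp_on_D} supplies $D_i(t)-\int_0^1 m_i^0(ds)>\varepsilon^0$ for every $(i,t)\in I\times[0,T]$; since $\lambda_i\geq 0$, this gives
\begin{equation*}
\tilde A(\varphi,\lambda)\geq Q+\varepsilon^0\sum_{i\in I}\int_0^T\lambda_i(dt).
\end{equation*}
Combining with the hypothesis $\tilde A(\varphi,\lambda)\leq A$ yields the claim with $K_A:=(A-Q)/\varepsilon^0$. There is no real obstacle here: the only subtle point is making sure the integral representation of Lemma \ref{int_sol_imp_weak_sol} applies to any $(\varphi,\lambda)\in\mathcal{R}_0$ (which requires verifying the continuity of $t\mapsto\lambda([t,T))$ at $0$, or more simply working with an approximating sequence $\lambda^n\in C^0$ as in Lemma \ref{conv_inf_space} and passing to the limit).
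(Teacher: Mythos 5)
Your proof is correct, and it reaches the key estimate $\tilde A(\varphi,\lambda)\geq Q+\varepsilon^0\sum_{i\in I}\int_0^T\lambda_i(dt)$ by a different mechanism than the paper. The paper's proof does not invoke the integral representation at all: it takes the freely transported density $\rho$ (the $C^1$ solution of \eqref{fk} with $\alpha=0$) as a test function in the weak formulation \eqref{def_sub_so_2}, so that the transport terms cancel because $\partial_t\rho_i+\partial_s(b_i\rho_i)=0$; it then drops the nonnegative term $H((\varphi_j-\varphi_i))\rho_i$, and uses the conservation of mass per mode, $\int_0^1\rho_i(t,s)ds=\int_0^1 m_i^0(s)ds$, together with Assumption \ref{hyp_on_D} to extract the gap $\varepsilon^0$, yielding $K_A=(A+\sum_{i}K_i)/\varepsilon^0$. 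You instead apply Lemma \ref{int_sol_imp_weak_sol} to obtain the representation along characteristics at $t=0$ (this is exactly \eqref{init_equation_ae}, valid for a.e.\ $s$ with $\varphi_i(0,\cdot)$ in the sense of trace, which is what enters $\tilde A$), drop $H\geq 0$, and integrate against $m^0$; since $\rho_i(t,\cdot)$ is the pushforward of $m_i^0$ by the flow $S_i^{0,\cdot}(t)$, a change of variables shows your constant $Q$ equals $-\sum_i K_i$, so the two bounds are literally the same. In effect you have transplanted the argument the paper uses for Lemma \ref{dual_finite} (where $Q$ is defined the same way) from $\mathcal{L}_0$ to $\mathcal{R}_0$, which each approach handles equally well; the paper's test-function route merely avoids re-deriving the $t=0$ trace identity at the price of the mass-conservation observation for $\rho$. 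One small correction: your closing caveat is unnecessary. The continuity of $t\mapsto\lambda([t,T))$ at $0$ is required only for the converse implication (Lemma \ref{Int_sol_implies_weak_sol}, integral solution implies weak solution, via Remark \ref{continuity_measure_0}); Lemma \ref{int_sol_imp_weak_sol} as stated applies to any $(\varphi,\lambda)\in \mathcal{M}^+([0,T]\times I)\times BV$ with $\varphi$ a weak solution, hence to every element of $\mathcal{R}_0$, and no approximation by $\lambda^n\in C^0$ is needed.
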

\begin{proof}
Let $K\in \mathbb{R}_+^\ast$ and 
$(\varphi,\lambda)\in \mathcal{K}_0$ be such that ${A}(\varphi,\lambda)\leq K$. Since  $(\varphi,\lambda)\in \mathcal{K}_0$, one can show, as in the proof of Lemma \ref{dual_finite}, that, for any $(i,s)\in I\times [0,1]$,
\begin{equation*}
    \varphi_i(0,s)\leq T\Vert c \Vert_\infty + \Vert g \Vert_\infty + \sum_{i\in I}\int_0^T\lambda_i(t)dt.
\end{equation*}
Therefore, recalling that ${A}(\varphi,\lambda)\leq K$:
\begin{equation*}
    \sum_{i\in I}\int_0^T\lambda_i(t)(D_i(t)-m_i([0,1]))dt\leq K+ T\Vert c \Vert_\infty + \Vert g \Vert_\infty .
\end{equation*}
From Assumption \ref{hyp_on_D}, there exists $\varepsilon^0>0$ such that $D_i(t)-m_i([0,1])>\varepsilon^0$ for any $(t,i)\in [0,T]\times I$. Setting $C:=( K+ T\Vert c \Vert_\infty + \Vert g \Vert_\infty)/\varepsilon^0$, we have:
\begin{equation*}
     \sum_{i\in I}\int_0^T\lambda_i(t)dt\leq C.
\end{equation*}
\end{proof}
We are now ready to prove Proposition \ref{eq_inf_relax}.

\begin{proof}[Proof of Proposition \ref{eq_inf_relax}]
Let $\{(\phi^n,\lambda^n)\}_n$ be a minimizing sequence of \eqref{dual_prob}. For any $n\in \mathbb{N}$, we consider $\varphi^n$ the classical solution of \eqref{hjb_0} associated with $\lambda^n$. 
Using that $\varphi^n$ is a classical solution and that $\phi^n$ satisfies \eqref{hjb_ineq} on $[0,T]\times [0,1]\times I$, and defining $\gamma$ on $[0,T]$ by
\begin{equation*}
    \gamma(t):=\underset{(x,i)\in [0,1]\times I}{\max}(\phi^n_i-\varphi_i^n)(t,x)),
\end{equation*}
one can show by adapting the proof Lemma \ref{a_priori_estimate} that $\gamma<0$ on $[0,T]$ and thus, that the following comparison holds: $\phi^n\leq \varphi^n$.
Thus, $A(\varphi^n,\lambda^n)\leq A(\phi^n,\lambda^n)$ for any $n\in \mathbb{N}$. Therefore, $\{(\varphi^n,\lambda^n)\}_n$ is also a minimizing sequence and, there exist $K>0$ and $n^0>0$ such that for any $n\geq n^0$ one has $A(\varphi^n,\lambda^n)\leq K$.
From Lemma \ref{borne_lambda}, the sequence $\{\sum_{i\in I}\int_0^T\lambda_i^n(t)dt\}_n$ is uniformly bounded.
Thus, a subsequence of $\{\lambda^n\}_n$ weakly$^\ast$ converges to a measure $\lambda\in \mathcal{M}^+([0,T],\mathbb{R}_+^{\vert I \vert})$ w.r.t. the weak$^\ast$ topology in $ \mathcal{M}^+([0,T],\mathbb{R}_+^{\vert I \vert})$ \cite[Theorem 1.59]{ambrosio2000functions}. We set, for any $n\in \mathbb{N}$,
\begin{equation*}
    \varphi^\lambda:=\Theta(\lambda)+L^\lambda\quad\mbox{ and }\quad\varphi^n:=\Theta(\lambda^n)+L^{\lambda^n},
\end{equation*}
where $\Theta$ is defined in \eqref{def_theta}. By the same arguments as in the proof of Lemma \ref{approx_lambda_smooth}, one has $ \lim_{n\to \infty}\Vert \varphi^\lambda(0,\cdot)-\varphi^n(0,\cdot)\Vert_\infty=0$ up to a subsequence of $\{\varphi^n\}_n$ and, therefore, we have:
\begin{equation*}
    \lim_{n\to \infty}A(\varphi^n, \lambda^n)=\tilde{A}(\varphi^\lambda,\lambda),
\end{equation*}
and the conclusion follows.
\end{proof}

\section{Characterization of the minimizers}
\label{charac_mini}

The purpose of this section is to define and characterize the solutions of Problem \eqref{problemE}. We show that the following system gives optimality conditions for \eqref{problemE}:
\begin{equation}
\label{forward_backward_system}
\left\{
\begin{array}{ll}
-\partial_t\varphi_i-b_i\partial_s\varphi_i-c_i-\lambda_i+\sum_{j\in I,j\neq i}H(\varphi_i-\varphi_j)= 0 
&\mbox{on }(0,T)\times (0,1)\times I,
\\
\partial_t m_i+\partial_s(m_i b_i)
+\sum_{j\neq i}H'(\varphi_i-\varphi_j)m_i -H'(\varphi_j-\varphi_i)m_{j} =0 
&\mbox{on }(0,T)\times (0,1)\times I,\vspace{0.2cm}\\
m_i(0,s) =  m_i^0(s),\,  \varphi_i(T,s) =  g_i(s)+\lambda_i(\{T\})
&\mbox{on } (0,1)\times I,
\\
\int_0^1m_i(t,ds)-D_i(t)\leq 0,\,\lambda\geq 0
&\mbox{on }[0,T]\times I,\\
\sum_{i\in I}\int_0^T\left(\int_0^1 m_i(t,ds)-D_i(t)\right)\lambda_i(dt)=0.
\end{array}
\right.
\end{equation}

The notion of weak solutions of system \eqref{forward_backward_system} is given in the following definition.
\begin{definition}
\label{weak_sol_syst}
A triplet $(\varphi, \lambda, m)\in (\Lip([0,T]\times [0,1],\mathbb{R}^{\vert I \vert})+BV([0,T],\mathbb{R}^{\vert I \vert}))\times \mathcal{M}^+([0,T],\mathbb{R}_+^{\vert I \vert})\times C^0([0,T],\mathcal{P}([0,1]\times I))$ is called a weak solution of \eqref{forward_backward_system} if it satisfies the following conditions:
\begin{enumerate}
\item \label{weak_sol_hjb_syst} The function $\varphi$ is a weak solution of \eqref{hjb_0}, associated with $\lambda$ in the sense of Definition \ref{def_weak_sol_hjb};
 \item $m$ satisfies the continuity equation:
\begin{equation*}
\partial_t m_i+\partial_s(m_i b_i)
+\sum_{j\neq i}H'(\varphi_i-\varphi_j)m_i -H'(\varphi_j-\varphi_i)m_{j} =0,\quad m_i(0,\cdot)=m_i^0,
\end{equation*}
in the sense of Definition \ref{weak_sol_cont_equ}, with $\alpha_{i,j}:=H'(\varphi_i-\varphi_j)$;
\item it holds that, for any $t\in [0,T]$, 
\begin{equation*}
\begin{array}{lll}
\int_0^1m_i(t,ds)-D_i(t)\leq 0 & \mbox{ and } &\sum_{i\in I}\int_0^T\left(\int_0^1 m_i(t,ds)-D_i(t)\right)\lambda_i(dt)=0.
\end{array}
\end{equation*}
\end{enumerate}
\end{definition}
\begin{remark}
Since $\varphi\in \Lip([0,T]\times [0,1],\mathbb{R}^{\vert I \vert})+BV([0,T],\mathbb{R}^{\vert I \vert})$ and $H'$ is Lipschitz continuous, the control $\alpha_{i,j}:=H'(\varphi_i-\varphi_j)$ is bounded on $[0,T]\times [0,1]\times I$ and $\partial_s\alpha_{i,j}\in L^\infty([0,T]\times [0,1])$. Thus, $\alpha_{i,j}$ is in $L_{m_i\otimes dt}^2([0,T]\times[0,1])$ and the forward equation in \eqref{forward_backward_system} makes sense.
\end{remark}

The following theorem states the optimality conditions of Problem \eqref{problemE}. We recall that the definition of $\mathcal{S}(m^0,D)$ is given in \eqref{def_set_CE}.
\begin{theorem}
\label{optimality_conditions}
\begin{enumerate}
\item \label{minimizer_implies_weak_solution}If $(m,E)\in \mathcal{S}(m^0,D)$ is a minimizer of Problem \eqref{problemE}, and $\varphi$ a weak solution of \eqref{hjb_0} in the sense of Definition \ref{def_weak_sol_hjb} associated with $\lambda$ satisfying $\tilde{A}(\varphi,\lambda)=\inf_{(\phi,\mu)\in 
\mathcal{K}_0}{A}(\phi,\mu)$, then $(\varphi, \lambda, m)$ is a weak solution of \eqref{forward_backward_system} and $\frac{\mbox{d}E_{i,j}}{\mbox{d}m_i\otimes dt}=H'(\varphi_i-\varphi_j)$ on $\{m_i>0\}$ for any $i,j\in I$.
\item \label{weak_sol_implies_minimizer}Conversely, if $(\varphi,\lambda,m)$ is a weak solution of \eqref{forward_backward_system}, then $\tilde{A}(\varphi,\lambda)=\inf_{(\phi,\mu)\in 
\mathcal{K}_0}{A}(\phi,\mu)$ and there exists $E$, defined for any $i,j\in I$ by $\frac{\mbox{d}E_{i,j}}{\mbox{d}m_i\otimes dt}:=H'(\varphi_i-\varphi_j)$, such that $(m,E)\in \mathcal{S}(m^0,D)$ is a minimizer of  \eqref{problemE}.
\end{enumerate}
\end{theorem}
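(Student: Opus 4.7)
The strategy rests on the Fenchel-Rockafellar duality already established in Theorems \ref{pb_in_duality} and \ref{pb_relaxed_equals_original_pb}, which together give
\begin{equation*}
\inf_{CE(m^0, D)} \tilde B + \inf_{\mathcal{R}_0} \tilde A = 0.
\end{equation*}
The crux of the proof is the following pairing identity: for any admissible $(m, E) \in CE(m^0, D)$ and $(\varphi, \lambda) \in \mathcal{R}_0$, writing $\alpha_{i,j} := \mathrm{d}E_{i,j}/\mathrm{d}m_i$,
\begin{equation*}
\tilde B(m, E) + \tilde A(\varphi, \lambda) = \sum_{(i,j) \in \tilde I} \int_0^T\!\!\int_0^1 \bigl[L(\alpha_{i,j}) + H(\varphi_j - \varphi_i) + (\varphi_j - \varphi_i)\alpha_{i,j}\bigr] m_i(t, ds)\, dt + \sum_{i \in I} \int_0^T \bigl(D_i(t) - m_i(t, [0,1])\bigr)\, \lambda_i(dt).
\end{equation*}
Both right-hand summands are non-negative: the bracketed integrand is the Young-Fenchel gap $L(\alpha_{i,j}) + L^*(\varphi_i - \varphi_j) - \alpha_{i,j}(\varphi_i - \varphi_j) \geq 0$, since $L^*(y) = (y^+)^2/2 = H(-y)$, and the second term is non-negative thanks to the constraints $m_i(t, [0,1]) \leq D_i(t)$ and $\lambda_i \geq 0$. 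Hence $\tilde B + \tilde A \geq 0$ on all admissible pairs, and the null duality gap forces \emph{both} integral terms to vanish whenever $(m, E)$ and $(\varphi, \lambda)$ are joint minimizers.

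The derivation of this identity proceeds formally by using $\varphi$ as a test function in the weak continuity equation \eqref{weak_sol_cont_equ} for $(m, E)$ and substituting $\partial_t\varphi_i + b_i\partial_s\varphi_i$ by $-c_i - \lambda_i + \sum_{j \neq i} H(\varphi_j - \varphi_i)$ from the HJB equation. Since $\varphi$ is only BV and $\lambda$ only a Radon measure, direct substitution is not admissible, and one regularizes: approximate $\lambda$ by a convolution $\lambda^n \in C^\infty([0,T] \times I, \mathbb{R}_+)$ (as in the proof of Theorem \ref{pb_relaxed_equals_original_pb}), take the associated classical solution $\varphi^n \in C^1$ from Lemma \ref{fixed_point_C1} and Lemma \ref{classical_solution}, carry out the integration by parts for the smooth pair $(\varphi^n, \lambda^n)$, and pass to the limit. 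The Hamiltonian terms converge via the uniform bound of Lemma \ref{Bound_sol_HJB} combined with the convergence $\varphi^n \to \varphi$ from Lemma \ref{conv_inf_space} and dominated convergence; the coupling term $\sum_i \int_0^T\!\!\int_0^1 m_i(t, ds)\, \lambda_i^n(dt)$ converges to its measure-valued analog using continuity of $(t,s) \mapsto m_i(t, s)$ (from Lemma \ref{UnifCont} applied along a minimizing sequence) together with the weak-$\ast$ convergence of $\lambda^n$.

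Given the identity, part \ref{minimizer_implies_weak_solution} is immediate. A pair of minimizers satisfies $\tilde B + \tilde A = 0$, so both non-negative integrals must vanish identically. Equality in Young-Fenchel is characterized by $\varphi_i - \varphi_j \in \partial L(\alpha_{i,j})$, and for $L(a) = a^2/2$ restricted to $a \geq 0$ this is equivalent to $\alpha_{i,j} = (\varphi_i - \varphi_j)^+$; hence $\alpha_{i,j} = (\varphi_i - \varphi_j)^+$ holds $m_i$-a.e., i.e.\ on $\{m_i > 0\}$. The vanishing of the second term is exactly the complementary slackness $\sum_i \int_0^T\!\!\int_0^1 m_i(t, ds)\lambda_i(dt) = \sum_i \int_0^T D_i(t)\, \lambda_i(dt)$. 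Combined with the weak HJB provided by $(\varphi, \lambda) \in \mathcal{R}_0$ and the continuity equation with $\alpha_{i,j} = (\varphi_i - \varphi_j)^+$ inherited from $(m, E) \in CE(m^0, D)$, this is precisely a weak solution in the sense of Definition \ref{weak_sol_syst}.

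For part \ref{weak_sol_implies_minimizer}, given a weak solution $(\varphi, \lambda, m)$, define $E_{i,j}$ through the density $\alpha_{i,j} = (\varphi_i - \varphi_j)^+$ with respect to $m_i$. The inclusion $(m, E) \in CE(m^0, D)$ is verified item by item: the continuity equation from Definition \ref{weak_sol_syst}, item 2; positivity of $\alpha$ by definition; $L^2_{m_i}$-integrability from boundedness of $\varphi$ in Lemma \ref{classical_solution}; and the density constraint from item 3 of the same definition. Inserting this $E$ into the identity, both right-hand terms vanish by construction (Young-Fenchel equality and complementary slackness), yielding $\tilde B(m, E) + \tilde A(\varphi, \lambda) = 0$, which combined with $\inf \tilde B + \inf \tilde A = 0$ forces both to be minimizers. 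The main technical obstacle throughout is the rigorous derivation of the duality identity for merely BV $\varphi$ and measure-valued $\lambda$; controlling the interplay between the weak-$\ast$ convergence of $\lambda^n$, the $L^1$ convergence of $\varphi^n$, and the regularity of $m$ in the Hamiltonian and coupling terms is what makes the passage to the limit delicate.
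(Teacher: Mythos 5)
Your overall strategy is exactly the paper's: combine Theorems \ref{pb_in_duality} and \ref{pb_relaxed_equals_original_pb} to get zero duality gap, regularize $\lambda$ by convolution into $\lambda^n\in C^\infty$, take the associated classical solutions $\varphi^n$ of \eqref{def_integral_equation} as test functions in the weak continuity equation, substitute the HJB equation, and pass to the limit (Lemmas \ref{conv_inf_space}, \ref{conv_phi_en_0}, \ref{classical_solution}, dominated convergence) to arrive at the identity whose two terms are the Fenchel--Young gap $L(\alpha_{i,j})+L^\ast(\varphi_i-\varphi_j)-\alpha_{i,j}(\varphi_i-\varphi_j)\geq 0$ integrated against $m_i$, plus the complementarity term $\sum_i\int_0^T(D_i-\int_0^1 m_i)\lambda_i(dt)\geq 0$. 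Your identification of the equality case ($\alpha_{i,j}=(\varphi_i-\varphi_j)^+$ $m_i$-a.e.) and your verification in part \ref{weak_sol_implies_minimizer} that $(m,E)\in CE(m^0,D)$ and that both terms vanish by construction are correct and coincide with the paper's argument (the paper closes part \ref{weak_sol_implies_minimizer} via Remark \ref{ineg_phi_0_phi_T}, which is your weak-duality inequality $-\tilde B\leq\tilde A$; note that remark is stated under $L^\infty$ hypotheses on $(m,E)$, so your claim that the pairing identity holds for \emph{all} admissible pairs is slightly stronger than what the paper records, though your limit argument does support it since $\tilde B$ is finite on $CE(m^0,D)$ and $E_{i,j}$ has finite mass by Cauchy--Schwarz).

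There is, however, one genuine omission in part \ref{minimizer_implies_weak_solution}. Definition \ref{weak_sol_syst} requires the triplet to satisfy $m\in\Lip([0,T]\times[0,1]\times I)$, and this is not automatic: a priori a minimizer $m$ of \eqref{problemE} is only a continuous curve of probability measures. Your proof ends with ``this is precisely a weak solution in the sense of Definition \ref{weak_sol_syst}'' without establishing this regularity, so as written the conclusion does not follow. The paper closes this step as follows: once $\frac{\mathrm{d}E_{i,j}}{\mathrm{d}m_i}=(\varphi_i-\varphi_j)^+$ is identified $m$-a.e., the spatial regularity of $\varphi$ coming from the fixed-point construction in Section \ref{hjb_section} (the fixed point lies in $\Sigma^\lambda$, so $\Vert\partial_s\varphi_i(t,\cdot)\Vert_\infty$ is uniformly bounded, hence $\alpha_{i,j}\in L^\infty((0,T)\times I\times I,\Lip([0,1]))$) allows one to invoke Lemma \ref{regularity_m}: the Fokker--Planck equation with such an $\alpha$ has a \emph{unique} weak solution, which is Lipschitz on $[0,T]\times[0,1]\times I$, and by uniqueness it must coincide with $m$. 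Both ingredients matter: the Lipschitz bound on $\alpha$ in the space variable (not merely boundedness, which is all your part \ref{weak_sol_implies_minimizer} uses for $L^2_{m_i}$-integrability) and the uniqueness statement of Lemma \ref{uniqueness_weak_sol_fk} underlying Proposition \ref{prop_fk}, which is what transfers the regularity from the constructed solution back to the given minimizer $m$. Adding this step would complete your proof.
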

\begin{remark}
If $(\varphi,\lambda,m)$ is a weak solution of \eqref{forward_backward_system}, then $(\varphi,\lambda)$ is a minimizer of a relaxed version of Problem \eqref{dual_prob}, i.e.  $(\varphi,\lambda)$ is the minimum of $\tilde{A}$, defined in \eqref{def_tilde_A_0}, over the space $ (\Lip([0,T]\times [0,1],\mathbb{R}^{\vert I \vert})+BV([0,T],\mathbb{R}^{\vert I \vert}))\times \mathcal{M}^+([0,T],\mathbb{R}_+^{\vert I \vert})$.
\end{remark}

\begin{remark}\label{final_time_atom}
    The term $\lambda_i(\{T\})$, in the final condition of the backward equation of the system \eqref{forward_backward_system}, can be compared with the Lagrange multiplier $\beta_T$ in \cite[Equation 1.2]{cardaliaguet2016first} of the density constraint at the final time. The term $\lambda_i(\{T\})$ ensures that the constraint \eqref{congestion_ineq_D} is satisfied at time $t=T$ and is strictly positive only if the constraint \eqref{congestion_ineq_D} is saturated. 
\end{remark}

\begin{remark}\label{uniqueness_disc}
One can not expect uniqueness of the solution of Problem \eqref{opt:J} and of the system \eqref{forward_backward_system}. This is mainly due to the absence of a strictly convex function w.r.t. the distribution $m$ in the definition of the objective function $J$ in \eqref{obj_formulation}.
\end{remark}

\subsection{Proof of Theorem 6.1}
Before the proof of Theorem \ref{forward_backward_system}, we make the following remark. We recall that the definition of $\tilde{A}$ is given in \eqref{def_tilde_A_0}, of $\tilde{J}$ in \eqref{def_B} and of $A$ in \eqref{def_dual_fun}.

{\color{red}}
\begin{remark}
\label{ineg_phi_0_phi_T}
For any $\lambda\in \mathcal{M}^+([0,T],\mathbb{R}_+^{\vert I \vert})$ and any $(m,E)\in \mathcal{S}(m^0,D)$ one has $-\tilde{J}(m,E)\leq \tilde{A}(\varphi^\lambda,\lambda)$. Indeed, considering a sequence $\{(\lambda^n,\varphi^n)\}_n$ defined as in Lemma \ref{approx_lambda_smooth} and using the proof of Theorem \ref{pb_in_duality}, we get: $-\tilde{J}(m,E)\leq A(\varphi^n,\lambda^n)$ and, therefore, $-\tilde{J}(m,E)\leq \tilde{A}(\varphi^\lambda,\lambda)$. 
\end{remark}

\begin{proof}[Proof of Theorem \ref{optimality_conditions}]\ref{minimizer_implies_weak_solution}. 
By Theorem \ref{pb_in_duality}, one has:
\begin{equation*}
\inf_{(\hat{\varphi},\hat{\lambda})\in \mathcal{K}_0}\,{A}(\hat{\varphi},\hat{\lambda})=
- \underset{(\hat{m},\hat{E})\in \mathcal{S}(m^0,D)}{\inf}\,\tilde{J}(\hat{m},\hat{E}),
\end{equation*}
and thus
\begin{equation}
\label{eg_opt_obj_functions}
\sum_{i\in I} \int_0^1g_im_i(T)-\varphi_i(0)m_i^0 +\int_0^TD_i\lambda_i+
\int_0^T\int_0^1\Bigg(c_i+\sum_{j\neq i}L\bigg(\frac{\mbox{d}E_{i,j}}{\mbox{d}m_i\otimes dt}\bigg)\Bigg)m_i = 0.
\end{equation}
We want to show that $E_{i,j}=H'(\varphi_i-\varphi_j)m_i$.
We consider a sequence $\{(\lambda^n,\varphi^n)\}_n$  defined as in Lemma \ref{approx_lambda_smooth}. 
For any $n\in \mathbb{N}$, $\varphi^n$ is smooth enough to be a test function for the weak formulation of \eqref{fk} satisfied by $m$. 
According to  Lemma \ref{approx_lambda_smooth} and the fact that $D\in C^0([0,T],\mathbb{R}^{\vert I \vert})$, it holds that, for any $n\in \mathbb{N}$ and $i\in I$,
\begin{equation*}
\begin{array}{l}
\sum_{i\in I} \int_0^1g_im_i(T)-\varphi_i(0)m_i^0  + \int_0^T\lambda_iD_i 
\\= \lim_{n\to \infty}\sum_{i\in I} \int_0^1g_im_i(T)-\varphi_i^n(0)m_i^0  + \int_0^T D_i\lambda^n_i\\
 =  \lim_{n\to \infty}\sum_{i\in I} \int_0^T\int_0^1\Bigg(\partial_t\varphi^n_i+b_i\partial_s(\varphi^n_i)
 +\sum_{j\in I,j\neq i}(\varphi_j^n-\varphi_i^n)\frac{\mbox{d}E_{i,j}}{\mbox{d}m_i\otimes dt}
 \Bigg)m_i  + \int_0^T\lambda_i^n D_i\vspace{0.1cm}
 \\
  =  \lim_{n\to \infty}\sum_{i\in I} \int_0^T\int_0^1\Bigg(-c_i+\sum_{j\in I,j\neq i}H(\varphi_i^n-\varphi_j^n) +\sum_{j\in I,j\neq i}(\varphi_j^n-\varphi_i^n)\frac{\mbox{d}E_{i,j}}{\mbox{d}m_i\otimes dt}
 \Bigg)m_i  + \int_0^T\lambda_i^n\left(D_i-\int_0^1m_i\right).
\end{array}
\end{equation*}
By the previous equality and \eqref{eg_opt_obj_functions},
\begin{equation*}
\lim_{n\to \infty}\sum_{i\in I} \int_0^T\int_0^1\Bigg(\sum_{j\in I,j\neq i}H(\varphi_i^n-\varphi_j^n) +L\bigg(\frac{\mbox{d}E_{i,j}}{\mbox{d}m_i\otimes dt}\bigg)
+(\varphi_j^n-\varphi_i^n)\frac{\mbox{d}E_{i,j}}{\mbox{d}m_i\otimes dt}
 \Bigg)m_i  + \int_0^T\lambda_i^n\left(D_i-\int_0^1m_i\right)  = 0.
\end{equation*}
According to Lemma
\ref{approx_lambda_smooth}, for a.e. $t\in [0,T]$, the sequence $\{\varphi(t,\cdot)\}_{n}$ converges uniformly to $\varphi$.
By the continuity of $H$ and the dominated convergence theorem, we get $\textstyle \int_0^T\int_0^1m_iH(\varphi_i^n-\varphi_j^n)$ converges to $\textstyle \int_0^T \int_0^1m_iH(\varphi_i-\varphi_j)$ for any $i,j\in I$.
Since $(m,E)$ is a solution of \eqref{problemE}, $\tilde{J}(m,E)$ is finite. Then 
one can show that,
for any $i,j\in I$, $\textstyle \int_0^T\int_0^1 E_{i,j}<\infty$. Applying  dominated convergence theorem, we can show that: $\textstyle\int_0^T\int_0^1(\varphi_j^n-\varphi_i^n)E_{i,j}$ converges to $\textstyle \int_0^T \int_0^1(\varphi_j-\varphi_i)E_{i,j}$.
Since, for any $i\in I$, the map $t\mapsto D_i(t)-\int_0^1m_i(t,ds)$ is continuous,
the weak$^\ast$ convergence of ${\lambda^n}_n$ to $\lambda$ in $\mathcal{M}^+([0,T],\mathbb{R}_+^{\vert I \vert})$ gives:
$$\lim_{n\to \infty}\sum_{i\in I}  \int_0^T\lambda_i^n\left(D_i-\int_0^1m_i\right)=\sum_{i\in I}  \int_0^T\lambda_i\left(D_i-\int_0^1m_i\right).$$
Thus,
\begin{equation}
\label{lim_eg_opt_obj_functions}
\int_0^T\int_0^1\Bigg(\sum_{j\in I,j\neq i}H(\varphi_i-\varphi_j) +L\bigg(\frac{\mbox{d}E_{i,j}}{\mbox{d}m_i\otimes dt}\bigg)
+(\varphi_j-\varphi_i)\frac{\mbox{d}E_{i,j}}{\mbox{d}m_i\otimes dt}
 \Bigg)m_i  + \int_0^T\lambda_i\left(D_i-\int_0^1m_i\right)  = 0.
\end{equation}
Since $\lambda\geq 0$ and $\int_0^1m_i(t,ds)\leq D_i(t)$ for any $t\in [0,T]$, one has, for any $i\in I$ and $t\in [0,T]$,
\begin{equation}
\label{lim_inf_lamb_ineq_m_n}
0\leq \int_0^T\left(D_i(t)-\int_0^1m_i(t,ds)\right)\lambda_i(dt).
\end{equation}
Recalling that  $\pmb{L}(p) = H(p)$, we have $L(p) + H(q)-pq\geq 0$ for any $p,q\in \mathbb{R}$. Thus, by inequality \eqref{lim_inf_lamb_ineq_m_n} and equality \eqref{lim_eg_opt_obj_functions}, one deduces 
\begin{equation*}
    \sum_{j\in I,j\neq i}H(\varphi_i-\varphi_j) +L\bigg(\frac{\mbox{d}E_{i,j}}{\mbox{d}m_i\otimes dt}\bigg)
+(\varphi_j-\varphi_i)\frac{\mbox{d}E_{i,j}}{\mbox{d}m_i\otimes dt} =0 \quad m-\mbox{a.e..}
\end{equation*}
Therefore,
\begin{equation}
\label{id_E}
\frac{\mbox{d}E_{i,j}}{\mbox{d}m_i\otimes dt}(t,s)=H'(\varphi_i(t,s)-\varphi_j(t,s))\quad
m-\mbox{a.e. },
\end{equation} 
and inequality \eqref{lim_inf_lamb_ineq_m_n} becomes an equality. Thus, 
\begin{equation*}
    \int_0^T\left(D_i-\int_0^1m_i(t,ds)\right)\lambda_i(dt)=0.
\end{equation*}
By equality \eqref{id_E}, the properties of $H$ and the fact that $\varphi\in \Lip([0,T]\times [0,1],\mathbb{R}^{\vert I \vert})+BV([0,T],\mathbb{R}^{\vert I \vert})$, one has $\frac{\mbox{d}E_{i,j}}{\mbox{d}m_i\otimes dt}\in L^\infty([0,T]\times [0,1])$ and $\partial_s \Big(\frac{\mbox{d}E_{i,j}}{\mbox{d}m_i\otimes dt}\Big)\in L^\infty((0,T)\times (0,1))$. Thus, by Proposition \ref{weak_sol_fk_coro} in Appendix \ref{appendices_section}, we deduce that 
$m \in \Lip([0,T],\mathcal{P}( [0,1]\times I))$.

\ref{weak_sol_implies_minimizer}. We assume now that $(\varphi,\lambda,m)$ is a weak solution of \eqref{forward_backward_system}. Since $\varphi$ is in $\Lip([0,T]\times [0,1],\mathbb{R}^{\vert I \vert})+BV([0,T],\mathbb{R}^{\vert I \vert})$ and $\lambda$ is a finite measure, the quantity $\tilde{A}(\varphi,\lambda)$ is well defined. We want to show that $\tilde{A}(\varphi,\lambda)+\tilde{J}(m,E)=0$. 
We approximate $(\lambda,\varphi)$ by the sequence $\{(\lambda^n,\varphi^n)\}_n$ defined in Lemma \ref{approx_lambda_smooth}.(\ref{minimizer_implies_weak_solution}). For any $n$, $\varphi^n$ is smooth enough to be considered as a test function for the equation \eqref{fk} satisfied in the weak sense by $m$. We have, for any $i\in I$,
\begin{equation}
\label{test_phi_weak_sol_m}
 \sum_{i\in I} \int_0^1g_im_i(T)-\varphi^n_i(0)m_i^0  +\sum_{i\in I}\int_0^T\int_0^1-m_ib_i\partial_s\varphi_i^n -m_i\partial_t\varphi_i^n+\sum_{j\in I,j\neq i}(\varphi_i^n-\varphi^n_j)H'(\varphi_i-\varphi_j)m_i= 0.
\end{equation}
For any $i\in I$, $\varphi^n_i$ is a classical solution of \eqref{hjb_0} associated with $\lambda^n$. Multiplying \eqref{hjb_0} by $m_i$, summing over $I$ and integrating over $[0,T]\times [0,1]$, we have:
\begin{equation}
\label{varphi_classic_sol_multiplied_m}
\sum_{i\in I}\int_0^T\int_0^1-m_i\partial_t\varphi_i^n-
m_ib_i\partial_s  \varphi_i^n -m_ic_i-m_i\lambda^n +\sum_{j\in I,j\neq i}H(\varphi_i^n-\varphi_j^n)m_i =0.
\end{equation}
Combining \eqref{test_phi_weak_sol_m} and \eqref{varphi_classic_sol_multiplied_m} yields
\begin{equation*}
\sum_{i\in I} \int_0^1g_im_i(T)-\varphi^n_i(0)m_i^0  +\sum_{i\in I}\int_0^T\int_0^1  c_i m_i +\lambda_i^n m_i  +m_i\bigg(\sum_{j\in I,j\neq i}H'(\varphi_i-\varphi_j)(\varphi^n_i-\varphi^n_j) - H(\varphi_i^n-\varphi_j^n)\bigg)= 0.
\end{equation*}
Since $(\varphi,\lambda,m)$ is a weak solution of \eqref{forward_backward_system}, by Lemma \ref{approx_lambda_smooth}, and letting $n$ tend to infinity, one deduces:
\begin{equation*}
\sum_{i\in I} \int_0^1g_im_i(T)-\varphi_i(0)m_i^0  +\sum_{i\in I}\int_0^1 D_i \lambda_i +\sum_{i\in I} \int_0^T\int_0^1  c_i m_i  +m_i\bigg(\sum_{j\in I,j\neq i}H'(\varphi_i-\varphi_j)(\varphi_i-\varphi_j) - H(\varphi_i-\varphi_j)\bigg)= 0.
\end{equation*}
By the definition of $L$ and $H$, we have:
\begin{equation*}
\sum_{i\in I} \int_0^1g_im_i(T)-\varphi_i(0)m_i^0  +\sum_{i\in I} \int_0^1 D_i \lambda_i +\sum_{i\in I} \int_0^T\int_0^1  c_i m_i  +m_i\bigg(\sum_{j\in I,j\neq i} L(H'(\varphi_i-\varphi_j))\bigg)= 0.
\end{equation*}
By the definition of $\tilde{A}$ in \eqref{def_tilde_A_0} and $\tilde{J}$ in \eqref{def_B}, we have $\tilde{A}(\varphi,\lambda)+\tilde{J}(m,E)=0$. Finally, by Remark \ref{ineg_phi_0_phi_T}, one deduces that $(m,E)$ is a minimizer of \eqref{problemE}.
\end{proof}

\subsection{Proof of Theorem 2.1}\label{proof_main_th}
We are now ready to prove our main theorem by using Theorem \ref{optimality_conditions} and applying the change of variable $\alpha_{i,j}:=\frac{\mathrm{d}E_{i,j}}{m_i}$. 
\begin{proof}[Proof of Theorem \ref{main_results}] The existence of a solution to Problem \eqref{opt:J} is given by Lemma \ref{Existence_sol_prob_init}.

\ref{min_imp_wea} This statement is proved by  Theorem \ref{optimality_conditions}.\ref{minimizer_implies_weak_solution}

\ref{wea_imp_min} This point is given by  Theorem \ref{optimality_conditions}.\ref{weak_sol_implies_minimizer}.

\ref{min_imp_reg} The uniform bound on $\alpha$ and $\partial_s\alpha$ are deduced by Theorem \ref{main_results}.\ref{min_imp_wea}, using the fact that $H$ has a globally Lipschitz continuous gradient and that $\varphi$ is in $\Lip([0,T]\times [0,1],\mathbb{R}^{\vert I \vert})+BV([0,T],\mathbb{R}^{\vert I \vert})$. The time regularity of $m$ is obtained by Proposition \ref{regularity_weak_solution} in Appendix \ref{appendices_section}.
\end{proof}
We now prove Proposition \ref{reg_m_sol}.
\begin{proof}[Proof of Proposition \ref{reg_m_sol}]
Let $(m,\alpha)$ be a solution of Problem \eqref{opt:J}
 and $\mu^0$ be the density of $m^0$ w.r.t. the Lebesgue measure. By a fixed point argument  \cite[Theorem 5.7]{brezis2011functional}, it is easy to check that there exists a unique solution $\mu\in\Lip([0,T]\times [0,1],\mathbb{R}^{\vert I \vert})$ of the following equation on $[0,T]\times [0,1]\times I$:
\begin{equation}
\label{int_formulation_G}
\begin{array}{ll}
  \mu_i(t,s)=   & \mu_i^0(S_i^{t,s}(0))+\int_0^t \mu_i(\tau,S_i^{t,s}(\tau))\partial_sb_i(S_i^{t,s}(\tau))  d\tau\\
     & +\int_0^t\sum_{j\in I, j\neq i}-\alpha_{i,j}(\tau,S_i^{t,s}(\tau))
\mu_i(\tau,S_i^{t,s}(\tau))+\alpha_{j,i}(\tau,S_i^{t,s}(\tau))\mu_j(\tau,S_i^{t,s}(\tau))d\tau.
\end{array}
\end{equation}
To prove the existence of a fixed point, it is crucial to have $\mu^0$ in $C^1([0,1],\mathbb{R}^{\vert I \vert})$ in order to define properly the quantity $\mu_i^0(S_i^{t,s}(0))$ and to obtain uniform bounds and Lipschitz estimates on $\mu_i$.
Denote by $\mathcal{L}$ the Lebesgue measure on $[0,T]\times [0,1]$,
the conclusion follows by  proving that $\mu\mathcal{L}$ is the unique weak solution of \eqref{fk}.
\end{proof}

\section{Stability with respect to the constraint and the initial distribution}\label{lip_value_data}

In this section we study how the value of Problem \eqref{opt:J} depends on the initial distribution $m^0\in \mathcal{P}([0,1]\times I)$ and on the parameter $D\in C^0\big([0,T],(\mathbb{R}_+^\ast)^{\vert I \vert}\big)$ of the constraint \eqref{congestion_ineq_D}. We endow the space  $\Omega:=\mathcal{P}( [0,1]\times I)\times C^0([0,T],(\mathbb{R}_+^\ast)^{\vert I \vert})$ with the distance $\mathcal{D}_\Omega$ defined by:
\begin{equation*}
    \mathcal{D}_\Omega((m^0,D),(\bar{m}^0,\bar{D})):=\mathcal{W}(m^0,\bar{m}^0)+\Vert D-\bar{D} \Vert_\infty,
\end{equation*}
where $\mathcal{W}$ is the $1$-Wasserstein distance on $\mathcal{P}( [0,1]\times I)$. We recall that the definition of $\Omega_\varepsilon$ is given in \eqref{def_omega_eps}. 
For any $\varepsilon>0$ we consider the function $\mathcal{V}:\Omega_\varepsilon\to \mathbb{R}$ defined by:
\begin{equation}\label{def_v}
    \mathcal{V}(m^0,D):=\inf_{(m,E)\in \mathcal{S}(m^0,D)}\,\tilde{J}(m,E),
\end{equation}
where the set $\mathcal{S}(m^0,D)$ is defined in \eqref{def_set_CE}.

The main result of this section is the following proposition, which shows the Lipschitz continuity of the value of the problem \eqref{opt:J} w.r.t. the initial distribution and the congestion constraint \eqref{congestion_ineq_D}.
\begin{proposition}\label{lip_prop_ata}
For any $\varepsilon>0$, $\mathcal{V}$ is Lipschitz continuous on $\Omega_\varepsilon$ w.r.t. the distance $\mathcal{D}_\Omega$.
\end{proposition}

To prove the proposition, we need to introduce some lemmas.
For any $(m^0,D)\in \Omega_\varepsilon $, we consider the function $A[m^0,D]:C^0([0,T]\times [0,1],\mathbb{R}^{\vert I \vert})\times C^0([0,T],\mathbb{R}_+^{\vert I \vert}) \to \mathbb{R}$ defined by:
\begin{equation*}
    A[m^0,D](\varphi,\lambda):=\sum_{i \in I}\int_0^1  - \varphi_i(0,s)m^0_i(ds) + \int_0^T\lambda_i(t)D_i(t)dt.
\end{equation*}
The following result gives some properties of the function $\mathcal{V}$.
\begin{lemma}\label{bound_A}
For any $\varepsilon>0$, the function $\mathcal{V}$ is bounded independently of $\varepsilon$, convex and l.s.c. on $\Omega_\varepsilon$.
\end{lemma}
\begin{proof}
Let  $\varepsilon>0$ and $C:=(T\Vert c \Vert_\infty + \Vert g \Vert_\infty)$. By the definition of $\mathcal{V}$ in \eqref{def_v}, one can show, for any $(m^0,D)\in \Omega_\varepsilon$, that $\vert \mathcal{V}(m^0,D) \vert \leq C$.
For any $(m^0,D)\in \Omega_\varepsilon$, the duality result in Theorem \ref{pb_in_duality} gives
\begin{equation}
\label{formulation_v_phi_lambda}
\mathcal{V}(m^0,D)=\sup_{(\varphi,\lambda)\in \mathcal{K}_0}-{A}[m^0,D](\varphi,\lambda).
\end{equation}
Since $\mathcal{V}$ is the supremum of continuous and linear functions, we deduce that $\mathcal{V}$ is convex and l.s.c. on $ \Omega_\varepsilon$. 
\end{proof}
For any $\varepsilon>0$ and $(m^0,D)\in \Omega_\varepsilon$, we know by Proposition \ref{eq_inf_relax} that there exists $(\varphi^{m^0,D},\lambda^{m^0,D})\in (\Lip([0,T]\times [0,1],\mathbb{R}^{\vert I \vert})+BV([0,T],\mathbb{R}^{\vert I \vert}))\times \mathcal{M}^+([0,T],\mathbb{R}_+^{\vert I \vert})$ such that $\varphi^{m^0,D}$ is a weak solution of \eqref{hjb_0}, in the sense of Definition \ref{def_weak_sol_hjb}, associated with $\lambda^{m^0,D}$ and such that $(\varphi^{m^0,D},\lambda^{m^0,D})$ satisfies:
\begin{equation*}
\sum_{i \in I}\int_0^1   \varphi^{m^0,D}_i(0,s)m^0_i(ds) - \int_0^TD_i(t)\lambda^{m^0,D}_i(dt)
=-\inf_{(\varphi,\lambda)\in \mathcal{K}_0}{A}[m^0,D](\varphi,\lambda)
= \mathcal{V}(m^0,D).
\end{equation*}
The next lemma provides an estimate on $\varphi^{m^0,D}$ and $\lambda^{m^0,D}$ for any $\varepsilon>0$ and $(m^0,\lambda)\in \Omega_\varepsilon$.
\begin{lemma}
\label{bound_phi_lambda_lip_const}
For any $\varepsilon>0$, there exists $C>0$ such that, for any $(m^0,D)\in \Omega_\varepsilon$,
$$ \max\big(\Vert \varphi^{m^0,D}\Vert_\infty, \,\Vert \partial_s\varphi^{m^0,D}\Vert_\infty,\, \sum_{i \in I}\lambda^{m^0,D}([0,T])\big)\leq C.$$
\end{lemma}
\begin{proof}
Let $\varepsilon>0$ and $(m^0,D)\in \Omega_\varepsilon$. According to Lemma \ref{bound_A}, there exists a constant $K>0$, independent of $\varepsilon$, such that 
\begin{equation*}
    \sum_{i \in I}\int_0^1   \varphi^{m^0,D}_i(0,s)m^0_i(ds) - \int_0^TD_i(t)\lambda^{m^0,D}_i(dt)<K.
\end{equation*}
Thus, by using same arguments as in the proof of Lemma \ref{borne_lambda} and setting $\tilde{K}:=(K+T\Vert c \Vert_\infty + \Vert g \Vert_\infty)/\varepsilon$, one obtains:
\begin{equation}
\sum_{i\in I}\lambda_i^{m^0,D}([0,T])\leq \tilde{K}.
\end{equation}
By Remark \ref{link_psi_varphi} and the previous inequality, there exists a constant $\tilde{C}$, which depends on $\tilde{K}$, such that $\Vert \varphi^{m^0,D}\Vert_\infty$ and $\Vert \partial_s\varphi^{m^0,D}\Vert_\infty$ are bounded by  $\tilde{C}$. The conclusion follows by setting $C:=\max(\tilde{C},\tilde{K})$.
\end{proof}
We are now ready to prove the Lipschitz regularity of $\mathcal{V}$.
\begin{proof}[Proof of Proposition \ref{lip_prop_ata}]
Let $\varepsilon>0$ and $(m^0,D),(\bar{m}^0,\bar{D})\in \Omega_\varepsilon$. By the definition of $\mathcal{V}$ in \eqref{def_v} and Lemma \ref{bound_phi_lambda_lip_const}, one has:
\begin{equation*}
\begin{array}{ll}
 \mathcal{V}(m^0,D)& \leq \sum_{i\in I}\int_0^1\varphi_i^{m^0,D}\bar{m}^0_i(ds) - \sum_{i\in I}\int_0^T\bar{D}_i(t)\lambda_i^{m^0,D}(dt) + \Vert \partial_s \varphi^{m^0,D} \Vert_\infty \mathcal{W}(m^0,\bar{m}^0)+  \sum_{i\in I}\lambda^{m^0,D}([0,T])\Vert D - \bar{D} \Vert_\infty \vspace{0.2cm}\\
 & 
\leq  \mathcal{V}(\bar{m}^0,\bar{D})
+C \mathcal{D}_\Omega((m^0,D),(\bar{m}^0,\bar{D})),
\end{array}
\end{equation*}
where $C>0$ is a constant defined in Lemma \ref{bound_phi_lambda_lip_const}.
Similarly, we have:
\begin{equation*}
\begin{array}{ll}
 \mathcal{V}(\bar{m}^0,\bar{D})\leq  \mathcal{V}(m^0,D)
+ C \mathcal{D}_\Omega((m^0,D),(\bar{m}^0,\bar{D})).
\end{array}
\end{equation*}
The conclusion follows.
\end{proof}

\appendix
\section{Appendix}\label{appendices_section}

\subsection{Properties of the continuity equation}

Some properties of the weak solution of the continuity equation \eqref{fk} are derived in this subsection. Assumptions in Section \ref{intro_section} are in force in the Appendix. A first result on the support of the solution is established in Lemma \ref{supportm}.

\begin{lemma}
\label{supportm}
For any weak solution $(\alpha,m)$ of \eqref{fk} in the sense of Definition \ref{weak_sol_cont_equ}, $m(t)$ has a support contained in $[0,1]\times I$ for any $t\in[0,T]$.
\end{lemma}
\begin{proof}
Let $\varepsilon>0$ and $\varphi^\varepsilon\in C_c^\infty([0,T]\times \mathbb{R},\mathbb{R}^{\vert I \vert})$ such that, for any $t\in [0,T]$ and $i\in I$,
\begin{equation*}
\begin{array}{c c c c }
\varphi^\varepsilon_i(t,s)\in [0,1], \quad \forall s\in\mathbb{R};\quad
\varphi^\varepsilon_i(t ,s)= 0, \quad \forall s\in\mathbb{R}\setminus (-1-\varepsilon,2+ \varepsilon) ;
&
\mbox{and}
& \varphi^\varepsilon_i(t ,s)= 1 \quad \forall s\in [-1,2].
\end{array}
\end{equation*}
Since $m$ is a weak solution of \eqref{fk}, $b$ satisfies Assumption \ref{hyp_on_b}, and $\varphi_i=\varphi_j$ for any $i,j\in I$, we deduce that, for any $t\in (0,T)$,
\begin{equation}
\label{derivationPhi}
\begin{array}{ll}
\frac{d}{dt}\int_\mathbb{R}\sum_{i\in I}\varphi^\varepsilon_i(t,s)m_i(t,ds)
 &   =  \int_\mathbb{R}\sum_{i\in I}\partial_s\varphi^\varepsilon_i(t,s)b_i(s)m_i(t,ds)\\
 &  =  \int_{-1-\varepsilon}^{-1}\sum_{i\in I} \partial_s\varphi^\varepsilon_i(t,s)b_i(s)m_i(t,ds) +  \int_2^{2+\varepsilon}\sum_{i\in I} \partial_s\varphi^\varepsilon_i(t,s)b_i(s)m_i(t,ds) \\
 & = 0.
\end{array}
\end{equation}
By \eqref{derivationPhi} and the continuity of $m$, we deduce that $t\mapsto \int_\mathbb{R}\sum_{i\in I}\varphi^\varepsilon_i(t,s)m_i(t,ds)$ is constant on $[0,T]$. Let $\varepsilon$ tend to $+\infty$, it holds that $t\mapsto \int_\mathbb{R}\sum_{i\in I}m_i(t,ds)$ is constant over $[0,T]$. Then, we have for any $t\in (0,T)$, 
\begin{equation*}
    \int_\mathbb{R}\sum_{i\in I}m_i(t,ds) = \int_\mathbb{R}\sum_{i\in I}m_i^0(ds)=1.
\end{equation*}
Now let us show that $\int_0^1\sum_{i\in
I}m_i(t,ds)=1$.
Let $\varepsilon>0$ and $\psi^\varepsilon$ be another test function in $C_c^\infty([0,T]\times \mathbb{R},\mathbb{R}^{\vert I \vert})$ such that, for any $t\in [0,T]$ and $i\in I$,
\begin{equation*}
\begin{array}{c c c }
\psi^\varepsilon_i(t ,s)= 0, \quad \forall s\in\mathbb{R}\setminus (-\varepsilon,1+ \varepsilon);
&
 \partial_s\psi^\varepsilon_i(t,s)\geq 0,\quad\forall  s\in (-\varepsilon,0);
 
 & 
 \partial_s\psi^\varepsilon_i(t,s)\leq 0,\quad\forall s\in (1,\varepsilon); \vspace{0.1cm}
 \\
\mbox{and }\,
 \psi^\varepsilon_i(t ,s)= 1, \quad \forall s\in [0,1].&&
\end{array}
\end{equation*}
By the same computation as in \eqref{derivationPhi} and Assumption \ref{hyp_on_b}, one has, for any $t\in (0,T)$,
\begin{equation*}
\begin{array}{ll l}
\frac{d}{dt}\int_\mathbb{R}\sum_{i\in I}\psi^\varepsilon_i(t,s)m_i(t,ds)
 &  =  \int_{-\varepsilon}^{0}\sum_{i\in I} \partial_s\psi^\varepsilon_i(t,s)b_i(s)m_i(t,ds) +  \int_1^{1+\varepsilon}\sum_{i\in I} \partial_s\psi^\varepsilon_i(t,s)b_i(s)m_i(t,ds) \geq 0.
\end{array}
\end{equation*}
Thus, $t\mapsto \int_\mathbb{R}\sum_{i\in I}\psi^\varepsilon_i(t,s)m_i(t,ds)$ is non-decreasing on $[0,T]$. Taking the limit $\varepsilon\to 0$, the map $t\mapsto \int_0^1\sum_{i\in I}m_i(t,ds)$ is also non-decreasing on $[0,T]$. Finally, for any $t\in[0,T]$: $1=\int_0^1\sum_{i\in I}m_i(0,ds)\leq
\int_0^1\sum_{i\in I}m_i(t,ds) \leq \int_\mathbb{R}\sum_{i\in I}m_i(t,ds)=1$.
\end{proof}
For any pair of weak solution $(m,\alpha)$ of \eqref{fk}, the next lemma provides some regularity on $m$ if $\alpha$ and $\partial_s\alpha$ are bounded.
\begin{lemma}\label{regularity_weak_solution}
Let $m^0\in \mathcal{P}([0,1]\times I)$ and $(m,\alpha)$ be a weak solution of \eqref{fk} in the sense of Definition \ref{weak_sol_cont_equ}, with $\alpha\in L^\infty([0,T]\times[0,1],\mathbb{R}^{\vert I \vert\times \vert I \vert }_+)$ . Then, $m$ belongs to $\Lip([0,T],\mathcal{P}([0,1]\times I))$ with a Lipschitz constant independent of $m^0$.
\end{lemma}
\begin{proof}
The proof follows the same steps as the ones of Lemma \ref{UnifCont}. Using that $\alpha$ is uniformly bounded, one can show that, for any $t,\tilde{t}\in [0,T]$,
\begin{equation*}
\mathcal{W}(m(t,\cdot),m(\tilde{t},\cdot))\leq 
\vert t-\tilde{t}\vert(\vert I \vert\Vert b\Vert_\infty + \Vert \alpha \Vert_\infty).
\end{equation*}
The conclusion follows.
\end{proof}

Finally, the next Proposition states that, for any $\alpha$, the existence and uniqueness of an $m$ in $C^0([0,T],\mathcal{P}([0,1]\times I))$, such that $(m,\alpha)$ is a weak solution of \eqref{fk}.
\begin{proposition}\label{weak_sol_fk_coro}
Let $m^0\in \mathcal{P}([0,1]\times I)$ and  $\alpha\in L^\infty([0,T] \times [0,1],\mathbb{R}^{\vert I \vert \times \vert I \vert})$ satisfy $\partial_s \alpha \in L^\infty([0,T] \times [0,1],\mathbb{R}^{\vert I \vert \times \vert I \vert})$. Then, there exists a unique $m\in\Lip([0,T],\mathcal{P}([0,1]\times I)$ such that $(m,\alpha)$ is a weak solution of \eqref{fk} in the sense of Definition \ref{weak_sol_cont_equ}.
\end{proposition}
\begin{proof}
The existence and uniqueness of a weak solution are proved in \cite{cocozza2004approximation} for controls $\alpha$ that are continuous in space and time independent. The extension of this result to bounded controls that are measurable in time is straightforward.
\end{proof}

\subsection{Properties of the metric \texorpdfstring{$\mathcal{D}$}{Lg}}\label{prop_metric_d}
We recall that the function $\mathcal{D}$ is defined on $\mathcal{M}^+([0,T],\mathbb{R}_+^{\vert I \vert})^2$ by
\begin{equation*}
    \mathcal{D}(\lambda,\mu):=    \int_0^T\sum_{i\in I}\left\vert \int_t^T(\lambda_i-\mu_i)(d\tau)\right\vert dt+ \sum_{i\in I}\left\vert  \int_0^T(\lambda_i-\mu_i)(dt)\right\vert .
\end{equation*}
We show in this section that the function $\mathcal{D}$ is a distance on $\mathcal{M}^+([0,T],\mathbb{R}_+^{\vert I \vert})$ (Lemma \ref{D_distance}). Then, a comparison between the weak$^\ast$ topology in $\mathcal{M}^+([0,T],\mathbb{R}_+^{\vert I \vert})$ and the topology induced by the distance $\mathcal{D}$ is presented in Lemma \ref{rem_weak_conv_dist}. Finally, we state in Lemma \ref{density_c_infty} that for any $\delta>0$, the space $C_\delta^0([0,T],\mathbb{R}_+^{\vert I \vert})$ is dense in $\mathcal{M}_{\delta}^+([0,T],\mathbb{R}_+^{\vert I \vert })$ w.r.t. the topology induced by $\mathcal{D}$.

\begin{lemma}\label{D_distance}
    The function $\mathcal{D}$ is a distance on $\mathcal{M}^+([0,T],\mathbb{R}_+^{\vert I \vert})$.
\end{lemma}
\begin{proof}
    One can easily check that for any $\lambda^1,\lambda^2,\lambda^3\in \mathcal{M}^+([0,T],\mathbb{R}_+^{\vert I \vert})$, we have: 
    \begin{equation*}
        \mathcal{D}(\lambda^1,\lambda^2)= \mathcal{D}(\lambda^2,\lambda^1)\geq 0,
    \end{equation*}
    and 
    \begin{equation*}
          \mathcal{D}(\lambda^1,\lambda^2)\leq   \mathcal{D}(\lambda^1,\lambda^3) +   \mathcal{D}(\lambda^3,\lambda^2).
    \end{equation*}
    We need to show that if $  \mathcal{D}(\lambda^1,\lambda^2)=0$, then $\lambda^1=\lambda^2$. By the definition of $\mathcal{D}$ in \eqref{def_metric_D}, if $\mathcal{D}(\lambda^1,\lambda^2)=0$, then, for any $i\in I$, we have $\lambda^1_i([0,T])=\lambda^2_i([0,T])$ and there exists $\mathcal{T}\subset [0,T]$, such that $\mathcal{L}([0,T]\setminus \mathcal{T})=0$ (where $\mathcal{L}$ is the Lebesgue measure on $\mathbb{R}_+$) 
    and for any $t\in \mathcal{T}$, we have $\lambda^1_i([t,T])=\lambda^2_i([t,T])$. By \cite[Proposition 1.8]{ambrosio2000functions}, one gets $\lambda^1_i=\lambda^2_i$ on the $\sigma-$algebra generated by the set $\{[t,T],t\in\mathcal{T}\cup\{0\}\}$, which coincides with $\mathcal{B}([0,T])$. Thus, $\lambda^1$ is equal to $\lambda^2$ on $\mathcal{B}([0,T])$.
\end{proof}

\begin{lemma}\label{rem_weak_conv_dist}
    Let a sequence $\{\lambda^n\}_n$ in $\mathcal{M}_{\delta}^+([0,T],\mathbb{R}_+^{\vert I \vert })$ converge w.r.t. the weak$^\ast$ topology in $\mathcal{M}^+([0,T],\mathbb{R}_+^{\vert I \vert})$ to $\lambda\in\mathcal{M}_{\delta}^+([0,T],\mathbb{R}_+^{\vert I \vert })$. Then we have $\lim_{n\to \infty}\,\mathcal{D}(\lambda^n,\lambda)=0$. 
\end{lemma}
\begin{proof}
Let $\{\lambda^n\}_n$ weakly$^\ast$ converge to $\lambda$ in $\mathcal{M}_{\delta}^+([0,T],\mathbb{R}_+^{\vert I \vert })$. 
Since, for any $f\in C^0([0,1],\mathbb{R}^{\vert I \vert})$,
\begin{equation}
    \label{convergence_weak_f}
    \underset{n\to \infty}{\lim}\sum_{i\in I}\int_0^Tf_i(t)\lambda^n_i(dt)=\sum_{i\in I}\int_0^Tf_i(t)\lambda_i(dt),
\end{equation}
by a slight adaptation of the proof of \cite[Theorem 25.8]{billingsley2013convergence}, one can show that, for any $t\in [0,T]$ such that $\lambda_i(\{t\})=0$ for any $i\in I$, one has $\underset{n\to \infty}{\lim}\,\lambda_i^n([t,T])=\lambda_i([t,T])$. Taking $f=1$ in \eqref{convergence_weak_f}, one obtains $\underset{n\to \infty}{\lim}\,\lambda_i^n([0,T])=\lambda_i([0,T])$. Since $\lambda$ is a finite measure, it has a countable number of atoms on $[0,T]$. 
Thus, for a.e. $t\in [0,T]$ and for $t=0$, one has $\underset{n\to \infty}{\lim}\,\lambda_i^n([t,T])=\lambda_i([t,T])$. Applying the dominated convergence theorem, the conclusion follows.
\end{proof}

\begin{lemma}\label{density_c_infty}
The space $C_\delta^0([0,T],\mathbb{R}_+^{\vert I \vert})$ is dense in $\mathcal{M}_{\delta}^+([0,T],\mathbb{R}_+^{\vert I \vert })$ w.r.t. the topology induced by $\mathcal{D}$.
\end{lemma}
\begin{proof}
    Let $\lambda\in \mathcal{M}_{\delta}^+([0,T],\mathbb{R}_+^{\vert I \vert }) $, and $\tilde{\lambda}\in \mathcal{M}^+([0,T],\mathbb{R}_+^{\vert I \vert})$ be defined from $\lambda$ by $\tilde{\lambda}_i(A):=\lambda_i(A\cap [0,T])$, for any $(i,A)\in I\times \mathcal{B}(\mathbb{R})$. Let $\xi$ be a standard convolution kernel on $\mathbb{R}$ such that $\xi>0$ and $\int_\mathbb{R}\xi(x)dx=1$. Let $\xi^n(t):=\xi(t/\varepsilon_n)/\varepsilon_n$ with $\varepsilon_n\xrightarrow[n\to \infty]{} 0$. For any $n\in \mathbb{N}$, let the function $\lambda^n$ be defined by:
\begin{equation*}
\tilde{\lambda}^n:=\xi^n \ast \tilde{\lambda},
\end{equation*} 
where $\ast$ stands for the convolution product. {\color{black}More preciesely, one has for any $t\in \mathbb{R}$, $n\in \mathbb{N}$ and $i\in I$
\begin{equation}
    \label{def_conv}
    \tilde{\lambda}^n_i(t):=\int_\mathbb{R}\xi^n(t-\tau)\tilde{\lambda}_i(d\tau).
\end{equation}
}
Then, 
$\tilde{\lambda}^n\in C^\infty(\mathbb{R},\mathbb{R}_+^{\vert I \vert})$ and the sequence $\{\tilde{\lambda}^n\}_n$ weakly$^\ast$ converges to $\tilde{\lambda}$ in $\mathcal{M}^+([0,T],\mathbb{R}_+^{\vert I \vert})$ \cite[Lemma 7.1.10]{ambrosio2008gradient}. 
{\color{black}By the definition of the convolution \eqref{def_conv}, one has for any $n\in \mathbb{N}$,
\begin{equation*}
\begin{array}{lll}
   \sum_{i\in I}\int_\mathbb{R}\tilde{\lambda}^n_i(t)dt   &  =  \sum_{i\in I}\int_\mathbb{R}\int_\mathbb{R}\xi^n(t-\tau)\tilde{\lambda}_i(d\tau)dt
     &  =\sum_{i\in I}\int_\mathbb{R}\left(\int_\mathbb{R}\xi^n(t-\tau)dt\right)\tilde{\lambda}_i(d\tau)\vspace{0.2cm} \\
 &    &  =\frac{1}{\varepsilon_n}\sum_{i\in I}\int_\mathbb{R}\left(\int_\mathbb{R}\xi\Big(\frac{t-\tau}{\varepsilon_n}\Big)dt\right)\tilde{\lambda}_i(d\tau)\vspace{0.2cm}\\
 &    &= \sum_{i\in I}\int_\mathbb{R}\tilde{\lambda}_i(d\tau)\vspace{0.2cm}\\
 &    &= \sum_{i\in I}\int_0^T{\lambda}_i(d\tau)\vspace{0.2cm}\\
   &  &\leq \delta.
\end{array}
\end{equation*}
}
Now considering, for any $n\in \mathbb{N}$, the function $\lambda^n$ defined as the restriction of $\tilde{\lambda}^n$ on $[0,T]$, one has that ${\lambda}^n\in C^\infty([0,T],\mathbb{R}_+^{\vert I \vert})$ {\color{black}and by previous inequality}
\begin{equation*}
    \sum_{i\in I}\int_0^T{\lambda}^n_i(t)dt\leq  \sum_{i\in I}\int_0^T \tilde{\lambda}^n_i(t)dt\leq \delta.
\end{equation*}
Thus, ${\lambda}^n\mathcal{L}\in\mathcal{M}_{\delta}^+([0,T],\mathbb{R}_+^{\vert I \vert })$ (where $\mathcal{L}$ is the Lebesgue measure on $[0,T]$) and the sequence $\{{\lambda}^n\mathcal{L}\}_n$ weakly$^\ast$ converges to $\lambda$ in $\mathcal{M}_{\delta}^+([0,T],\mathbb{R}_+^{\vert I \vert })$. By  Remark \ref{rem_weak_conv_dist} we have, $\lim_{n\to \infty}\mathcal{D}(\lambda,\lambda^n\mathcal{L})=0$.\end{proof}

\subsection{Proof of Lemma 4.8}\label{proof_lemma_eq_weak_sol_int_sol}
This section is devoted to the proof Lemma \ref{int_sol_imp_weak_sol} stated in Section \ref{existence_measure_weak_solution}. 

\begin{proof}[Proof of Lemma \ref{int_sol_imp_weak_sol}]

    Let $\varphi$ be a weak solution of \eqref{hjb_0} associated with $\lambda$. Let $\beta\in C^1([0,T]\times[0,1], \mathbb{R}^{\vert I \vert})$ be a test function. Then,
\begin{equation}
\label{weak_sol_smooth_case_beta}
\begin{array}{l}
\int_0^1\varphi_i(0,s)\beta_i(0,s)ds -\int_0^1g_i(s)\beta_i(T,s)ds
+\int_0^T\int_0^1\bigg(\partial_t\beta_i(t,s)+\partial_s(\beta_i(t,s)b_i(s))\bigg)\varphi_i(t,s)dsdt \vspace{0.1cm} \\
+\int_0^T\int_0^1
\left(\sum_{j\in I,j\neq i}H(\varphi_i(t,s)-\varphi_j(t,s))-c_i(t,s)\right)\beta_i(t,s)dtds-\int_0^T\int_0^1\beta_i(t,s)ds\lambda_i(dt)\vspace{0.1cm}\\
= 0.
\end{array}
\end{equation}
We choose the function $\beta$ such that there exist $\theta\in C^\infty([0,T]\times [0,1],\mathbb{R}^{\vert I \vert})$ and $\xi\in C^\infty([0,1],\mathbb{R}^{\vert I \vert})$ satisfying: 
\begin{equation}
\label{equ_verif_phi}
\begin{array}{ll}
\partial_t\beta_i(t,s)+\partial_s(\beta_i(t,s)b_i(s)) = \theta_i(t,s) &  \mbox{ for any }(t,s,i)\in (0,T)\times (0,1)\times I,\\
 \beta_i(0,\cdot)=\xi_i(\cdot)&\mbox{ for any }(s,i)\in [0,1]\times I.
\end{array}
\end{equation}
The function $\beta$ is given by for any $(t,s,i)\in [0,T]\times [0,1]\times I$,
\begin{equation*}
\beta_i(t,s)=\int_0^t\theta_i(\tau, S_i^{t,s}(\tau))\exp\left(-\int_\tau^t b'_i(S_i^{t,s}(r))dr\right)d\tau
+ \xi_i(S_i^{t,s}(0))
\exp\left(-\int_0^t b'_i(S_i^{t,s}(\tau))d\tau\right),
\end{equation*}
where $S_i^{t,s}$ is the unique solution of the ODE \eqref{ODE}.
To simplify \eqref{weak_sol_smooth_case_beta}, we introduce, for any $i\in I$,
 the functions $\nu_i$ and $\pi_i$, satisfying $\beta_i=\nu_i+\pi_i$, and for any $(t,s)\in [0,T]\times [0,1]$,
\begin{equation*}
\nu_i(t,s):=\int_0^t\theta_i(\tau, S_i^{t,s}(\tau))\exp\bigg(-\int_\tau^t b'_i(S_i^{t,s}(r))dr\bigg)d\tau
\quad \mbox{ and }\quad
\pi_i(t,s):=\xi_i(S_i^{t,s}(0))
\exp\bigg(-\int_0^t b'_i(S_i^{t,s}(\tau))d\tau\bigg).
\end{equation*}
Setting $h_i(t,s) := \sum_{j\in I,j\neq i }H(\varphi_i(t,s)-\varphi_j(t,s)) - c_i(t,s)$ for any $(t,s,i)\in [0,T]\times [0,1]\times I$, we have $h\in L^1((0,T)\times (0,1))$. 
By the definition of $\beta,\nu,\pi$ and $h$, equality \eqref{weak_sol_smooth_case_beta} becomes, for any $i\in I$,
\begin{equation}\label{weak_sol_smooth_case_beta_2}
\begin{array}{l}
  \int_0^1\varphi_i(0,s)\xi_i(s)-g_i(s)(\nu_i(T,s)+\pi_i(T,s))ds
         +\int_0^T\int_0^1\theta_i(t,s)\varphi_i(t,s)+h_i(t,s)(\nu_i(t,s)+\pi_i(t,s))dsdt \vspace{0.2cm}  \\
         -\int_0^T\int_0^1\beta_i(t,s)ds\lambda_i(dt)=0.
\end{array}
\end{equation}
We want to compute each integral of the previous equality.
By the definition of $\nu$ and by switching the order of  integration, one has
\begin{equation*}
\begin{array}{ll}
         \int_0^T\int_0^1h_i(t,s)\nu_i(t,s)dtds &=\int_0^T\int_0^1\int_0^t h_i(t,s)\theta_i(\tau, S_i^{t,s}(\tau))\exp\bigg(-\int_\tau^t b'_i(S_i^{t,s}(r))dr\bigg)d\tau ds\,dt \\
     & =\int_0^T\int_0^1\int_\tau^T h_i(t,s)\theta_i(\tau, S_i^{t,s}(\tau))\exp\bigg(-\int_\tau^t b'_i(S_i^{t,s}(r))dr\bigg)dt ds\,d\tau. 
\end{array}
\end{equation*}
The function $s\mapsto S_i^{t,s}(\tau) $ being in $C^1([0,1])$, we consider the change of variable: $x= S_i^{t,s}(\tau)$, $\frac{ds}{dx}=\partial_x S_i^{t,s}(\tau)$. By the definition of  $S_i^{t,s}$ in \eqref{ODE}, one has $s= S_i^{\tau,x}(t)$ and $S_i^{t,S_i^{\tau,x}(t)}(r)=S_i^{\tau,x}(r)$. 
Thus, previous equality becomes
\begin{equation*}
 \int_0^T\int_0^1h_i(t,s)\nu_i(t,s)dtds 
 = \int_0^T\int_0^1\theta_i(\tau, x)\int_\tau^T h_i(t,S_i^{\tau,x}(t))\exp\bigg(-\int_\tau^t b'_i(S_i^{\tau,x}(r))dr\bigg)\partial_xS_i^{\tau,x}(t)dt\, dx\,d\tau.
\end{equation*}
Since $b\in C^1([0,1],\mathbb{R}^{\vert I \vert})$, the flow $S_i$, satisfies the following equation for any $(i,\tau,t,s)\in I\times (0,T)\times (0,T)\times (0,1)$:
\begin{equation*}
\partial_xS_i^{\tau,x}(t)=\exp\bigg(\int_\tau^tb'_i(S_i^{\tau,x}(r))dr\bigg).
\end{equation*}
One deduces
\begin{equation}
\label{eq2_demo_weak_int}
 \int_0^T\int_0^1h_i(t,s)\nu_i(t,s)dtds =
\int_0^T\int_0^1\theta_i(\tau, x)
\int_\tau^T h_i(t,S_i^{\tau,x}(t))dt\, dx\, d\tau.
\end{equation}

By the same computations, for any $i\in I$, one has:
\begin{equation}
\begin{array}{ll}\label{eq_demo_weak_int_b}
 \int_0^1 g_i(s)\nu_i(T,s)ds
 & 
  = \int_0^T\int_0^1
 \theta_i(\tau, x) g_i(S_i^{\tau,x}(T))ds d\tau,
\end{array}
\end{equation}
\begin{equation}
\label{eq3_demo_weak_int}
\begin{array}{ll}
 \int_0^T\int_0^1h_i(t,s)\pi_i(t,s)dtds
&= \int_0^1\xi_i(x)\int_0^Th_i(t,S_i^{0,x}(t)) dt dx,\\
\end{array}
\end{equation}
\begin{equation}
\label{eq4_demo_weak_int}
\begin{array}{ll}
 \int_0^1 g_i(s)\pi_i(T,s)ds
& = \int_0^1 \xi_i( x) g_i(S_i^{0,x}(T))dx,
\end{array}
\end{equation}
and:
\begin{equation}
\label{same_calc_lambda}
\begin{array}{l}
 \int_0^T\int_0^1\beta_i(t,s)ds\lambda_i(dt)
 =  \int_0^T 
 \int_0^1\theta_i(\tau,x)L_i^\lambda(\tau)dxd\tau
+ \int_0^1\xi_i(x)L_i^\lambda(0)dx.
\end{array}
\end{equation}
By the definition of $h$ given above and \eqref{eq2_demo_weak_int}-\eqref{same_calc_lambda}, \eqref{weak_sol_smooth_case_beta_2} becomes
\begin{equation*}
\begin{array}{l}
 \int_0^1 \xi_i(s)\bigg(\varphi_i(0,s)+\int_0^T\sum_{j\in I,j\neq i }H((\varphi_i-\varphi_j)(\tau, S_i^{0,s}(\tau))) -c_i(\tau,S_i^{0,s}(\tau)) d\tau -L_i^\lambda(0)-g_i(S_i^{0,s}(T))\bigg)ds  \\
+
 \int_0^T \int_0^1 \theta_i(t,s)\bigg(\varphi_i(t,s) 
+
\int_t^T \sum_{j\in I,j\neq i }H((\varphi_i-\varphi_j)(\tau, S_i^{t,s}(\tau))) -c_i(\tau,S_i^{t,s}(\tau))d\tau - L_i^\lambda(t) -g_i(S_i^{t,s}(T)) \bigg)dsdt\\
=0.
\end{array}
\end{equation*}
This equality holds for any test functions $\theta_i$ and $\xi_i$. Then, one has for any $s\in [0,1]$:
\begin{equation*}
\varphi_i(0,s) = \int_0^T\sum_{j\in I,j\neq i }-H((\varphi_i-\varphi_j)(\tau, S_i^{0,s}(\tau)))+c_i(\tau,S_i^{0,s}(\tau)) d\tau+L_i^\lambda(0) +g_i(S_i^{0,s}(T)),
\end{equation*}
and for any $(t,s)\in [0,T]\times [0,1]$:
\begin{equation*}
\varphi_i(t,s)  = \int_t^T \sum_{j\in I,j\neq i }-H((\varphi_i-\varphi_j)(\tau, S_i^{t,s}(\tau)))+c_i(\tau,S_i^{t,s}(\tau))d\tau +L_i^\lambda(t)+g_i(S_i^{t,s}(T)).
\end{equation*}
\end{proof}

\section*{Acknowledgement}

This research benefited from the support of the FiME Lab (Institut Europlace de Finance) and the support of the FMJH Program Gaspard Monge for optimization and operations research and their interactions with data science.
I would like to thank my advisor, Pierre Cardaliaguet, for suggesting the problem and many helpful conversations.
\bibliographystyle{plain}
\bibliography{biblio}

\end{document}